\def\mt{t\kern-0.035cm\char39\kern-0.03cm}
\def\ml{l\kern-0.035cm\char39\kern-0.03cm}
\def\md{d\kern-0.035cm\char39\kern-0.03cm}
\def\mL{L\kern-0.08cm\char39}
\newtheorem {definition}{Definition}[section]
\newtheorem {theorem}[definition]{Theorem}
\newtheorem {lema}[definition]{Lemma}
\newtheorem {proposition}[definition]{Proposition}
\newtheorem {example}[definition]{Example}
\newtheorem {remark}[definition]{Remark}
\newtheorem {corollary}[definition]{Corollary}
\newenvironment{itemize*}%
  {\vspace*{-\parskip}\begin{itemize}%
    \setlength{\itemsep}{2pt}%
    \setlength{\parskip}{0pt}}%
  {\end{itemize}}
\newcommand{\proof}{\medskip \noindent {\bf Proof. \ \ }}
\newcommand{\collectionGSF}{\mathcal{E}}
\newcommand{\x}{\mathbf{x}}
\newcommand{\y}{\mathbf{y}}
\newcommand{\vectorA}{\mathbf{a}}
\newcommand{\vectorB}{\mathbf{b}}
\newcommand{\vectorC}{\mathbf{c}}
\newcommand{\bi}{\mathbf{i}}
\newcommand{\bj}{\mathbf{j}}
\newcommand{\bin}{\mathbf{1}}
\newcommand\sA{\mathsf{A}}
\newcommand\aA[2][E]{\sA(#2|#1)}
\newcommand\aAp[2][E]{\sA^{\prime}(#2|#1)}
\newcommand\aAi[3][E]{\sA^{#3}(#2|#1)}
\newcommand\cA{{\mathcal A}}
\newcommand\bA{{\mathscr {A}}}
\newcommand\cE{{\mathcal{E}}}
\newcommand\mmu{\,\underline{\smash{\!\mu\!}}\,}
\newcommand\inv[1]{{\langle#1\rangle}}
\newcommand\blfootnote[1]{%
  \begingroup
  \renewcommand\thefootnote{}\footnote{#1}%
  \addtocounter{footnote}{-1}%
  \endgroup
}
\def\gsf{\mu_{\bA}(\x,\alpha)}
\def\gsf#1{\mu_{\mathcal{A}^{#1}}(\x,\alpha)}
\def\gsf#1#2#3{\mu_{\mathcal{A}^{#1}}(#2,#3)}
\def\gsfp#1#2#3{\mu^\prime_{\mathcal{A}^{#1}}(#2,#3)}
\def\ozn#1{[#1]_0}
\def\fir{{\varphi^*}}
\def\fil{{\varphi_*}}
\def\psir{{\psi^*}}
\def\psil{{\psi_*}}
\definecolor{darkgreen}{rgb}{0,1,0.1}
\definecolor{brown}{rgb}{0.82,0.41,0.15}
\definecolor{brown}{rgb}{0.43, 0.21, 0.1}
\newcommand{\jb}[1]{{\color{red}#1}}
\newcommand{\lh}[1]{{\color{blue}#1}}
\newcommand{\js}[1]{{\color{brown}#1}}
\newcommand{\bas}[1]{{\color{orange}#1}}
\newcommand{\qed}{\null\hfill $\Box\;\;$ \medskip}
\newcommand\gout{\bgroup\markoverwith{\textcolor{green}{\rule[0.5ex]{5pt}{1.5pt}}}\ULon}
\title{TRALALA}
\newcolumntype{o}{>{\centering\arraybackslash}m{1.2cm}}
\newcolumntype{n}{>{\centering\arraybackslash}m{0.5cm}}
\newcolumntype{N}{>{\centering\arraybackslash}m{0.6cm}}
\newcolumntype{M}{>{\centering\arraybackslash}m{0.7cm}}
\newcolumntype{S}{>{\centering\arraybackslash}m{2.5cm}}
\date{ }
\begin{document}

\begin{center}
{\bf {\sc \Large Conditional aggregation-based Choquet integral\\ on discrete space}}
\end{center}

\vskip 12pt

\begin{center}
{\bf Stanislav Basarik, Jana Borzová, \bf Lenka Hal\v{c}inov\'{a}, Jaroslav \v{S}upina}\blfootnote{{\it Mathematics Subject Classification (2010):} Primary 28A12, 28E10
}\\
{\footnotesize{\textit{Institute of Mathematics, P.~J.~\v{S}af\'arik University in Ko\v sice, Jesenn\'a 5, 040 01 Ko\v{s}ice, Slovakia}}}
\end{center}

\begin{abstract}
We derive computational formulas for the~generalized Choquet integral based on the~novel survival function introduced by M.~Boczek et al.~\cite{BoczekHalcinovaHutnikKaluszka2020}. We demonstrate its usefulness on the~Knapsack problem and the~problem of accommodation options. 
Moreover, we describe sufficient and necessary conditions under which novel survival functions based on different parameters coincide. This is closely related to the incomparability of input vectors (alternatives) in decision-making processes.
\end{abstract}

\noindent{\small{\it Keywords: }{Choquet integral; conditional aggregation operator; decision making, survival function}}

\section{Introduction}


M.~Boczek et al.~\cite{BoczekHalcinovaHutnikKaluszka2020}, inspired by consumer’s problems, aggregation operators, and conditional expectation, introduced a new notion of conditional aggregation operators. Conditional aggregation operators cover many existing aggregations, such as the~arithmetic and geometric mean, or plenty of~integrals known in~the~literature~\cite{KlementMesiarPap2010, Shilkret1971, Sugeno1974}. They became the essence of the~generalization of survival function (a notion known from~\cite{DuranteSempi2015}, from~\cite{grabisch2016set} known as the decumulative distribution function or from~\cite{BorzovaHalcinovaHutnik2015a} known as the strict level measure) introduced by M.~Boczek et al.~\cite{BoczekHalcinovaHutnikKaluszka2020}. Just as the survival function is the basis of the definition of the famous Choquet integral, the~generalized survival function enabled the building of a new integral. The generalized Choquet integral $\mathrm{C}_{\cA}(\x,\mu)$ based on the~generalized survival function $\gsf{}{\x}{\alpha}$ has been naturally introduced by
\[
\mathrm{C}_{\cA}(\x,\mu)=\int_{0}^{\infty}\gsf{}{\x}{\alpha}\,\mathrm{d}\alpha.
\]

A~discrete form of the famous Choquet integral is of great importance in decision making theory, regarding a finite set $[n] = \{1,\dots, n\}$ as criteria set, a vector $\x \in [0, +\infty)^{[n]}$ as a~score vector, and a capacity $\mu\colon 2^{[n]} \to [0, +\infty)$ as the weights of particular sets of criteria. 
We provide problems, which complement the~ones described in~\cite{BoczekHalcinovaHutnikKaluszka2020}, and which stress the~potential of the~novel survival function and integral based on it in decision theory. However, the~main aim of the~present paper is to provide various computational formulas for $\gsf{}{\x}{\alpha}$, and consequently for $\mathrm{C}_{\cA}(\x,\mu)$, since the computation of $\mathrm{C}_{\cA}(\x,\mu)$ has not yet been thoroughly investigated. Thus, we believe that our computational algorithms improve the~practical implementation of the~novel concept of aggregation in various problems.

\par
The~novel survival function $\gsf{}{\x}{\alpha}$ is based on an~aggregation of an~input vector $\x$, applying each conditional aggregation operator from the~family~$\bA$ of size~$\kappa$ on~$\x$ and corresponding set~of its indices. To illustrate our results we provide a~sample formula for $\mathrm{C}_{\cA}(\x,\mu)$. Indeed, we show that 
\begin{align}\label{sample_Ch}{
\mathrm{C}_{\cA}(\x,\mu)=\sum_{i=0}^{\kappa-1}\mu(F_{\bi(i)})(\sA_{i+1}-\sA_i),}
\end{align}
where $\sA_0\leq\dots\leq\sA_{\kappa-1}$ is the~arrangement of all the~values of the~family of conditional aggregation operators~$\bA$ applied on~$\x$ and corresponding sets $E_0,\dots, E_{\kappa-1}$, and $\mu(F_0)\leq\dots\leq\mu(F_{\kappa-1})$ are all the~values of the~measure~$\mu$.
The~index~$\bi(i)$ deciding the~value of $\mathrm{C}_{\cA}(\x,\mu)$ on the~interval $[\sA_i,\sA_{i+1})$ is the~minimum of the~set $\{(0),(1),\dots,(i)\}$ of the~indices such that $F_{(0)}=E_0^c,\dots,F_{(i)}=E_i^c$. The~proof of formula~\eqref{sample_Ch} as well as many other similar formulas can be found in Theorem~\ref{vypocetCh}.
\par
The~paper is organized as follows. Section~2 contains necessary terminology on measure and a~family of conditional aggregation operators. Moreover, it describes two problems in decision-making theory that can be modeled by the~generalized survival function. The~first series of formulas for its computation is presented in~Section~3. The~next section reduces the~minimization of the~reals in the~formulas to the~minimization of indices, and shows two graphical approaches to obtain the~generalized survival function. In addition, it contains a~solution to the~first of two mentioned problems in decision-making theory. The~formulas for the~generalized Choquet integral computation are listed in Section~5, including simplified fomulas for special types of measures. The~solution to the~second problem from Section~2 is presented here. The~study of the~conditions such that the~novel survival functions based on different parameters coincide is performed in~Section~6. Finally, the~paper itself contains just proofs of selected results, the~remaining proofs are attached in the~Appendix.




\section{Background and interpretations}\label{sec: motivation}

As we have already mentioned, we shall consider a finite set $[n]:=\{1,2,\dots,n\}$, $n\in\mathbb{N}$, $n\geq1$. 
Let us denote $\ozn{n}:=\{0\}\cup[n]$.
By $2^{[n]}$ we mean the power set of $[n]$.
A set function $\mu\colon \mathcal{S}\to[0,+\infty)$, $\{\emptyset\}\subseteq\mathcal{S}\subseteq2^{[n]}$, such that $\mu(E)\leq\mu(F)$ whenever $E\subseteq F$, with $\mu(\emptyset)=0$, we call \textit{monotone measure} on $\mathcal{S}$.
Moreover, if $[n]\in \mathcal{S}$, we assume $\mu([n])>0$, and if $\mu([n])=1$, the monotone measure $\mu$ is called \textit{capacity} or \textit{normalized monotone measure}.
Further, we put $\max{\emptyset}=0$, $\min{\emptyset}=+\infty$ and $\textstyle\sum_{i\in\emptyset}x_i=0$.

We shall work with nonnegative real-valued vectors, we use the notation $\x=(x_1,\dots,x_n)$, $x_i\in[0,+\infty)$, $i\in[n]$.
The family of all nonnegative real-valued vectors on $[n]$ is the set $[0,+\infty)^{[n]}$.
By $\bin_E$ we shall denote \textit{indicator function} of a set $E\subseteq [0,+\infty)$, i.e.\ $\bin_E(x)=1$, if $x\in E$, and $\bin_E(x)=0$, if $x\notin E$.
Especially, $\bin_\emptyset(x)=0$ for each $x\in E$.
Let us consider a set $\collectionGSF\subseteq2^{[n]}$.
Unless otherwise stated, for application reasons we assume $\{\emptyset,[n]\}\subseteq\collectionGSF$.
We call it the \textit{collection}.
Let us denote the number of sets in $\collectionGSF$ by $\kappa$, i.e.\ $|\collectionGSF|=\kappa$. 
Let $\hat{\cE}=\{E^c:E\in\cE\}$, i.e.\ $\hat{\cE}$ contains the complements of the sets from collection $\collectionGSF$.
The set of all monotone measures on $\hat{\collectionGSF}$ we shall denote by $\mathbf{M}$.

In the following, we present the definition of the conditional aggregation operator. 
The inspiration for this concept can be found in probability theory, specifically in conditional expectation. 
The idea of aggregating data not on the whole, but on a conditional set is expressed in the terms of this definition. 
The conditional aggregation operator generalizes the classical definition of aggregation operator introduced by Calvo et al.\ in~\cite{CalvoKolesarovaKomornikovaMesiar2002} and forms the basic component of the definition of the generalized survival function.

\begin{definition}\label{def: gsf}\rm(cf.~\cite[Definition 4.1.]{BoczekHalcinovaHutnikKaluszka2020})
A~\textit{family of conditional aggregation operators} (FCA for short) is a~family 
$$\bA=\{\aA{\cdot}: E\in\collectionGSF\}\footnote{$\cA$ is a~family of operators parametrized by a~set from~${\cE}$.},$$
such that each $\aA{\cdot}$ is a~map $\aA{\cdot}\colon [0,+\infty)^{[n]}\to[0,+\infty)$ satisfying the following conditions: 
\begin{enumerate}[\rm(i)]
\item $\aA[E]{\x}\le \aA[E]{\y}$ for any  $\x,\y$ such that $x_i\le y_i$ for any $i\in E$, $E\neq\emptyset$;  
\item  $\aA{\bin_{E^c}}=0$, $E\neq\emptyset$. 
\end{enumerate}
If $\mu$ is a monotone measure on $\hat{\cE}=\{E^c: E\in\cE\}$, i.e. $\mu\in\mathbf{M}$, then the~\textit{generalized survival function} with respect to $\bA$ is defined as
\begin{eqnarray}\label{predpis gsf}\gsf{}{\x}{\alpha}:=\min\left\{\mu(E^c): \aA[E]{\x}\leq\alpha,\, E\in\collectionGSF\right\}\end{eqnarray} for any $\alpha\in[0,+\infty)$.
\end{definition}

If $E\neq\emptyset$, then $\aA{\cdot}$ is called the~\textit{conditional aggregation operator w.r.t.\ $E$}. Moreover, we consider that each element of FCA satisfies $\aA[\emptyset]{\cdot}=0$.

\begin{remark}Note that the survival function $\mu(\{\x>\alpha\})=\mu(\{i\in[n]:x_i>\alpha\})$ can be rewritten as \begin{align}\label{vznik}\mu(\{\x>\alpha\})&=\mu([n]\setminus\{\x\leq\alpha\})=\min\big\{\mu (E^c):(\forall i\in E)\,\, x_i\leq \alpha,\, E\in 2^{[n]}\big\}\nonumber\\ &=\min\{\mu(E^c):\max_{i\in E}x_i\leq\alpha,E\in 2^{[n]}\},\end{align} therefore the introduction of generalized survival function consists in the simple idea of replacing $\max_{i\in E}x_i$~in~(\ref{vznik}) with another functional. Clearly, for $\collectionGSF=2^{[n]}$ and $\cA^{\mathrm{max}}$ we get the original strict survival function.\end{remark}

\begin{example}\label{prikladyFCA}\rm
Let $\x\in[0,+\infty)^{[n]}$. Typical examples of the FCA are:\begin{enumerate}[(i)]
\item $\cA^{\mathrm{max}}=\{\aAi[E]{\cdot}{\mathrm{max}}:E\in{\cE}\}$ with $\aAi[E]{\x}{\mathrm{max}}=\max_{i\in E}x_i$ for $E\neq\emptyset$;
\item $\cA^{\mathrm{min}}=\{\aAi[E]{\cdot}{\mathrm{min}}:E\in{\cE}\}$ with $\aAi[E]{\x}{\mathrm{min}}=\min_{i\in E}x_i$ for $E\neq\emptyset$;
\item $\cA^{\mathrm{sum}}=\{\aAi[E]{\cdot}{\mathrm{sum}}:E\in{\cE}\}$ with $\aAi[E]{\x}{\mathrm{sum}}=\sum_{i\in E}x_i$ for $E\neq\emptyset$.\end{enumerate}\end{example}

Note that the FCA does not have to contain elements of only one type.

\begin{example}
Let $\x\in[0,+\infty)^{[3]}$, and ${\cE}=\{\emptyset, \{1\},\{2\},\{3\},\{1,2,3\}\}$. We can consider the following FCA $$\cA=\{\aAi[E]{\cdot}{\mathrm{max}}:E\in\{\{1\},\{2\},\{1,2,3\}\}\cup\{\aAi[E]{\cdot}{\mathrm{min}}:E\in\{\{3\}\}\}\cup \{\aA[\emptyset]{\cdot}\}.$$
\end{example}
For other examples of FCA we recommend~\cite{BoczekHalcinovaHutnikKaluszka2020}.
On several places in this paper, we shall work with the FCA that is \textit{nondecreasing} w.r.t sets, i.e. the map $E\mapsto\aA[E]{\cdot}$ will be nondecreasing. E.g., the families $\cA^{\mathrm{max}}$ and $\cA^{\mathrm{sum}}$  in Example~\ref{prikladyFCA} are nondecreasing w.r.t. sets.
\bigskip

In the following, we present problems that emphasize the need for the generalized survival function and the generalized Choquet integral in real situations. 
We stress that M.~Boczek et al.\ in~\cite{BoczekHalcinovaHutnikKaluszka2020} introduced several problems of this kind. However, we move beyond these examples.
The solution to these problems using our results is delayed to Subsection~\ref{subsec: indices} and Section~\ref{Choquet}.

\paragraph{Knapsack problem}
Let us imagine a person who is preparing for a holiday. The person plans to travel by plane. Therefore while packing the suitcase, he must keep the rules according to which it is allowed to carry less or equal to $1$~liter of liquids in the suitcase. Moreover, liquids must be in containers with a volume of up to $100$\,ml.
The prices of these products (soap, shampoo, cream, etc.) and their possible combinations (packages of products) are more expensive in the destination country than at home.
The person wants to buy such products, or packages of products, while still at home, to minimize the price of the holiday.

Let $[n]$, $n\geq 1$ be a set of liquid products that the person needs. Then $\collectionGSF\subseteq2^{[n]}$ represents all possible combinations of products. Let us consider $\x=(x_1,\dots,x_n)\in[0,+\infty)^{[n]}$, where $x_i$ represents the volume of $i$-th container and let a monotone measure $\mu\in\mathbf{M}$ represent the price of a package of products. 
Note that the monotone measure $\mu$ need not be additive. It is often possible to buy a package of products that is cheaper than the sum of the prices of the individual products.
The task is to choose such a combination $E\in\collectionGSF$ of products that their volume does not exceed the given limit, i.e.\ $\textstyle\sum_{i\in E}x_i\leq1000$ (milliliters), having in mind that we want to minimize the price of those products that will no longer fit in the suitcase and the person will have to buy them during the holiday.
In other words, we are faced to solve the optimization problem
$$\min\left\{\mu(E^c): \sum_{i\in E}x_i\leq1000,\,E\in\collectionGSF\right\}\text{.}$$

\noindent One can observe that the given formula is a special case of the generalized survival function given in~\eqref{predpis gsf} with the conditional aggregation operator being the sum.

Last, but not least let us point out the essence of the collection $\collectionGSF$. There are situations when instead of the whole powerset one is forced to consider a subcollection  $\collectionGSF\subset 2^{[n]}$. E.g. among the products that one is considering can be shampoo and conditioner separately or shampoo containing conditioner. Of course, these products should not be considered together. Thus all sets consisting of these three products are disqualified and it makes sense to take $\collectionGSF\subset 2^{[n]}$ instead of the whole powerset.

The~full solution to the~problem using our results is accomplished in Subsection~\ref{subsec: indices}.

\paragraph{Problem of accommodation options}
Three people, Anthony, Brittany, Charley, are going on three different holidays and each of them is looking for accommodation in their own destination. 
They use the same online search engine, which searches for available accommodation based on three criteria, namely the distance of accommodation from the destination (in km), the price per night (in euros), and reviews (scale from $1$ to $10$). 
Each person has a certain character (in the sense that the criteria have different importance for them). 
Anthony is a person who saves money, Brittany does not want to walk far and Charley is looking for high-quality accommodation. 
We can model these characters using monotone measures as shown in Table~\ref{charaktery} (assume that the given values are chosen by these three people in the search engine). 
Let us label the criteria as $\text{D}$ (distance), $\text{P}$ (price) and $\text{R}$ (reviews).
\begin{table}[H]
    \centering
    \begin{tabular}{c|c|c|c}
                          & distance (D) & price (P) & reviews (R) \\\hline
         Anthony ($\mu$)  & $0.3$ & $0.8$ & $0.1$ \\
         Brittany ($\nu$) & $0.75$ & $0.4$ & $0.2$ \\
         Charley ($\xi$)  & $0.2$ & $0.5$ & $0.7$
    \end{tabular}
    \caption{Characters of people expressed by a monotone measures $\mu$, $\nu$ and $\xi$}
    \label{charaktery}
\end{table}
\noindent For the sake of simplicity, let us suppose that the search engine offered Anthony, Brittany, and Charley two accommodation options, see Table~\ref{options_booking}. Let the values be in the range that is acceptable for them. 
\begin{table}[H]
    \centering 
    \begin{subtable}{0.32\linewidth}
    \centering
    \begin{tabular}{c|N|N|N}
             & $\text{D}$ & $\text{P}$ & $\text{R}$ \\\hline
       opt.\ $1$ & $4$ & $100$ & $7$\\
       opt.\ $2$ & $10$ & $84$ & $8$
    \end{tabular}
    \caption{Anthony}
    \end{subtable}
    \begin{subtable}{0.32\linewidth}
    \centering
    \begin{tabular}{c|N|N|N}
             & $\text{D}$ & $\text{P}$ & $\text{R}$ \\\hline
       opt.\ $1$ & $7$ & $100$ & $2$\\
       opt.\ $2$ & $10$ & $80$ & $10$
    \end{tabular}
    \caption{Brittany}
    \end{subtable}
    \begin{subtable}{0.32\linewidth}
    \centering
    \begin{tabular}{c|N|N|N}
             & $\text{D}$ & $\text{P}$ & $\text{R}$ \\\hline
       opt.\ $1$ & $3$ & $100$ & $8$\\
       opt.\ $2$ & $15$ & $95$ & $10$
    \end{tabular}
    \caption{Charley}
    \end{subtable}
    \caption{Accommodation options}
    \label{options_booking}
\end{table}
\noindent The aim is to determine the accommodation for each person that would fit him the best (based on character). In other words, the aim is to find a method that will select (based on the character of a person) the accommodation we would expect.

Clearly, this is a problem of the decision-making process. 
In the literature, there is a known approach to solving this problem using the theory of nonadditive measures and integrals, where Choquet integral 
is mainly used, see~\cite{Grabisch1996}. The main advantage of this method is that by nonadditive measures one can model interactions between criteria (e.g. usually with a higher review score one can  expect higher price).
In this paper, we point out the advantages of using a generalized version of the Choquet integral 
in (this) decision-making process.

The~full solution to the~problem using our results is presented in Section~\ref{Choquet}.

\section{
Computational formulas}\label{prvy_sposob}

When working with a~family of conditional aggregation operators~$\bA$, an input vector $\x$ is aggregated on sets $E$ from $\cE\subseteq 2^{[n]}$. By this procedure, instead of $n$ input components of $\x$ we obtain $\kappa=|\cE|$ input values of the~family of conditional aggregation operators~$\bA$:
\begin{align}\label{Ei}
0=\sA_0\leq\sA_1\leq\dots\leq\sA_{\kappa-2}\leq\sA_{\kappa-1}<+\infty,
\end{align}
with the convention $\sA_{\kappa}=+\infty$. This ``new'' input enters the computation of the generalized survival function. Now we provide two approaches to compute generalized survival function values, the~first one is presented in~Theorem~\ref{zjednodusenie_def} and the~other one in~Theorem~\ref{gsf2}. We shall fix a~notation.
\par
Let $E_*$ be any bijection $E_*\colon \ozn{\kappa-1}\to{\cE}$ such that 
\begin{align}\label{EiAi}
\sA_i=\aA[E_{i}]{\x}.
\end{align}
The~second bijection is a~map $F_*\colon \ozn{\kappa-1}\to{\hat{\cE}}$ such that denoting $\mu_j=\mu(F_{j})$ we have:
\begin{align}\label{Fi}
0=\mu_0\leq \mu_1\leq\dots\leq\mu_{\kappa-2}\leq\mu_{\kappa-1}< +\infty.
\end{align}
It should be recalled that $\hat{\cE}$ is the collection of complements of $\cE$, i.e. for every $F_j\in\hat{\cE}$ there exists $E_i\in\cE$ such that $F_j=E_i^c$. We denote sets in collection $\cE$ by $E_i$, $i\in\ozn{\kappa-1}$, and their complements in $\hat{\cE}$ are denoted by $F_j$, $j\in\ozn{\kappa-1}$, because of technical details. In the whole paper for ease of writing, we shall use a~shortcut notation 
$$\sA_{\inv{j}}:=\aA[F_j^c]{\x},\,\,\,\, \text{and}\,\,\,\, \mu_{(i)}:=\mu(E_i^c).$$
One can notice that maps $E_*$ and $F_*$ need not be unique (they are unique just in case they are injective on~$\ozn{\kappa-1}$), but this has no influence on presented results.


In the following example, we only demonstrate the introduction of bijection $E_*$ and $F_*$, respectively.
We shall use the input data below also in other examples in this paper. 

\begin{example}\label{graphical_representation}\rm
Let us consider the collection $\cE=\{\emptyset, \{1\},\{2\}, \{3\},\{1,3\},\{1,2,3\}\}$, the family of conditional aggregation operators  $\cA^{\mathrm{sum}}=\{\aAi[E]{\cdot}{\mathrm{sum}}:E\in\cE\}$, the vector $\x=(2,3,1)$, and the monotone measure $\mu$ on $\hat{\cE}$ with corresponding values in the table. 
\begin{table}[H]
\renewcommand*{\arraystretch}{1.2}
\centering
\begin{tabular}{|c|o|o|o|o|o|o|}
\hline
$E$ & $\emptyset$ & $\{1\}$ & $\{2\}$ & $\{3\}$ & $\{1,3\}$ & $\{1,2,3\}$ \\ \hline
$\aAi[E]{\x}{\mathrm{sum}}$ & $0$  & $2$ & $3$ & $1$ & $3$ & $6$\\\hline
$F$ & $\{1,2,3\}$ & $\{2,3\}$ & $\{1,3\}$ & $\{1,2\}$ & $\{2\}$ & $\emptyset$ \\ \hline
$\mu(F)$ & $1$ & $0.7$ & $0.5$ & $0.5$ & $0.5$ & $0$ \\ \hline
\end{tabular}
\end{table}
\noindent Using maps $E_*$ and $F_*$ we obtain the following arrangement: 
\begin{table}[H]
\renewcommand*{\arraystretch}{1.2}
\centering
\begin{tabular}{|n|o|o|o|o|o|o|}
\hline
$i,j$ & 0 & 1 & 2 & 3 & 4 & 5 \\ \hline
$E_i$ & $\emptyset$ & $\{3\}$ & $\{1\}$ & $\{1,3\}$ & $\{2\}$ & $\{1,2,3\}$ \\\hline
$\sA_i$ & $0$ & $1$ & $2$ & $3$ & $3$ & $6$ \\\hline
$F_j$ & $\emptyset$ & $\{2\}$ & $\{1,2\}$ & $\{1,3\}$ & $\{2,3\}$ & $\{1,2,3\}$ \\ \hline
$\mu_j$ & $0$ & $0.5$ & $0.5$ & $0.5$ & $0.7$ & $1$ \\ \hline
 \end{tabular}
\end{table}
\end{example}
\medskip 

\par

The~first approach to computing generalized survival function values is based on an (ascending) arrangement~\eqref{Ei} of conditional aggregation operator values. Let us note that this approach is directly derived from Definition~\ref{def: gsf}. In the computation process using this approach, we look for generalized survival function values that are achieved at intervals $[\sA_i,\sA_{i+1})$ with the convention $\sA_\kappa=+\infty$.

\begin{theorem}\label{zjednodusenie_def}
Let $\cA$ be a FCA, $\mu\in\mathbf{M}$, $\x\in[0,+\infty)^{[n]}$.
Then \begin{enumerate}[{\rm(i)}] 
\item 
for any $i\in\ozn{\kappa-1}$ it holds that $$\gsf{}{\x}{\alpha} = \min_{k\leq i}\mu_{(k)}\,\,\ \textrm{for any}\,\,\alpha\in[\sA_i,\sA_{i+1});$$ 
\item \belowdisplayskip=0pt \abovedisplayskip=0pt\parbox{\linewidth}{
\begin{align}\label{vyjgsf1}
\gsf{}{\x}{\alpha}=\sum_{i=0}^{\kappa-1} \min_{k\leq i}\mu_{(k)}\bin_{[\sA_i,\sA_{i+1})}(\alpha)\,\,\ \textrm{for any}\,\,\alpha\in[0,+\infty).
\end{align}}

\end{enumerate}
\end{theorem}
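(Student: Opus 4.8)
The plan is to unwind Definition~\ref{def: gsf} directly through the bijection $E_*$, after which both parts become elementary. First I would rewrite the generalized survival function using the enumeration $\cE=\{E_0,E_1,\dots,E_{\kappa-1}\}$ supplied by $E_*$. Since $\aA[E_k]{\x}=\sA_k$ by~\eqref{EiAi} and $\mu(E_k^c)=\mu_{(k)}$ by the shorthand notation, the defining formula~\eqref{predpis gsf} turns into
\[
\gsf{}{\x}{\alpha}=\min\{\mu_{(k)} : \sA_k\leq\alpha,\ k\in\ozn{\kappa-1}\}.
\]
The entire statement then reduces to determining, for a given $\alpha$, exactly which indices $k$ satisfy the constraint $\sA_k\leq\alpha$.

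For part~(i) I would fix $i\in\ozn{\kappa-1}$ and take any $\alpha\in[\sA_i,\sA_{i+1})$. Invoking the monotonicity of the arrangement~\eqref{Ei}, every $k\leq i$ satisfies $\sA_k\leq\sA_i\leq\alpha$, so the constraint holds, whereas every $k\geq i+1$ satisfies $\sA_k\geq\sA_{i+1}>\alpha$, so it fails. Thus $\{k : \sA_k\leq\alpha\}=\{0,1,\dots,i\}$, and feeding this back into the rewritten definition gives $\gsf{}{\x}{\alpha}=\min_{k\leq i}\mu_{(k)}$, as claimed.

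I expect the only genuine obstacle to be the bookkeeping around ties among the values $\sA_k$, compounded by the non-uniqueness of $E_*$. When $\sA_i=\sA_{i+1}$ the interval $[\sA_i,\sA_{i+1})$ is empty and the assertion for that index is vacuous, so only indices with $\sA_i<\sA_{i+1}$ matter. For such an $i$, no value lies strictly between $\sA_i$ and $\sA_{i+1}$, so as a \emph{set} $\{E_0,\dots,E_i\}$ coincides with $\{E\in\cE : \aA[E]{\x}\leq\alpha\}$ independently of how tied sets are ordered; consequently $\min_{k\leq i}\mu_{(k)}$ is well defined and unaffected by the particular choice of bijection, as promised earlier.

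Finally, part~(ii) is a purely formal consequence of part~(i). Because $\sA_0=0$ and $\sA_\kappa=+\infty$, the intervals $[\sA_i,\sA_{i+1})$ with $i\in\ozn{\kappa-1}$ are pairwise disjoint and cover $[0,+\infty)$, the empty ones arising from ties contributing nothing. Hence for each $\alpha$ exactly one indicator $\bin_{[\sA_i,\sA_{i+1})}(\alpha)$ equals $1$, which collapses the sum in~\eqref{vyjgsf1} to the single surviving term $\min_{k\leq i}\mu_{(k)}$; by part~(i) this equals $\gsf{}{\x}{\alpha}$, completing the argument.
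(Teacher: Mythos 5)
Your proposal is correct and follows essentially the same route as the paper: both unwind Definition~\ref{def: gsf} through the enumeration $E_*$, observe that for $\alpha\in[\sA_i,\sA_{i+1})$ the constraint $\aA[E]{\x}\leq\alpha$ picks out exactly the sets $E_0,\dots,E_i$, and obtain (ii) immediately from (i). Your treatment is somewhat more explicit than the paper's (which silently restricts to indices with $\sA_i<\sA_{i+1}$ and omits the verification that indices $k\geq i+1$ violate the constraint), but the underlying argument is identical.
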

\proof (i) Let us consider an arbitrary (fixed) $i\in\ozn{\kappa-1}$ such that $\sA_i<\sA_{i+1}$, and let us take $\alpha\in[\sA_i,\sA_{i+1})$.
Since $0=\sA_0\leq\sA_1\leq\dots\leq\sA_i\leq\alpha$, then $$\gsf{}{\x}{\alpha} =\min\{\mu(E^c): \aA[E]{\x}\leq \alpha,\, E\in\collectionGSF\}=\min\{\mu_{(k)}: k\in\{0,\dots,i\}\}=\min_{k\leq i}\mu_{(k)}.$$

\noindent The case (ii) immediately follows from (i).\qed

\begin{remark} 
Let $(x_1,\dots,x_n)\in[0,+\infty)^{[n]}$. Let us denote 
$\x=(x_{\sigma(1)},x_{\sigma(2)},\dots,x_{\sigma(n)})$, with $\sigma\colon [n]\to[n]$ being a permutation such that $x_{\sigma(1)}\leq x_{\sigma(2)}\leq \dots\leq x_{\sigma(n)}$ with the convention $x_{\sigma(0)}=0$. 
Also let $\cA^\mathrm{max}$ be a FCA with the collection $\collectionGSF=\{G_{\sigma{(i+1)}}^c:i\in\ozn{n}\}$ where $G_{\sigma{(i)}}=\{\sigma{(i)},\dots,\sigma{(n)}\}$ for $i\in[n]$ and $G_{\sigma(n+1)}=\emptyset$. 
Then from the previous proposition we get the standard formula of the survival function~\cite{HalcinovaHutnikKiselakSupina2019}: 
$$\gsf{\mathrm{max}}{\x}{\alpha}=\sum_{i=0}^{n-1} \mu{(G_{\sigma(i+1)})}\bin_{[x_{\sigma{(i)}},x_{\sigma{(i+1)}})}(\alpha)\,\,\ \textrm{for any}\,\,\alpha\in[0,+\infty)\text{.}$$
Indeed, $\sA^\mathrm{max}(\x|G_{\sigma(i+1)}^c)=x_{\sigma(i)}$ for any $i\in\ozn{n}$.
Moreover,  if $x_{\sigma(i)}=\sA^\mathrm{max}_{l_i}$, $l_i\in[\kappa-1]_0$, then $\min_{k\leq l_i}\mu_{(k)}=\min_{k\leq i}\mu(G_{\sigma(k)})=\mu(G_{\sigma(i)})$
because of the monotonicity of $\mu$.
\end{remark}




\begin{remark}\label{gsf=0}\rm
Since $[n]\in\cE$, then the definition of generalized survival function guarantees to achieve zero value. 
According to arrangement in~(\ref{Ei}), for any $\alpha\geq\sA_{\kappa-1}\geq \aA[{[n]}]{\x}$ it holds
$$\gsf{}{\x}{\alpha}=\min\{\mu(E^c): \aA[E]{\x}\leq\alpha,\,E\in\collectionGSF\}\leq\mu([n]^c)=\mu(\emptyset)=0\text{.}$$
On the other hand $\gsf{}{\x}{\alpha}\geq0$.
Thus, the generalized survival function achieves zero value on the interval $[\aA[{[n]}]{\x},+\infty)\supseteq[\sA_{\kappa-1},+\infty)$.
\end{remark}

\begin{example}\label{priklad1}\rm
Let us consider the same inputs as in Example~\ref{graphical_representation}. Using  Theorem~\ref{zjednodusenie_def}(i) let us compute the generalized survival function for any $\alpha\in[0,+\infty)$.  
\begin{itemize*}
    \item $\gsf{\mathrm{sum}}{\x}{\alpha}=\mu_{(5)}=0$ for any $\alpha\in[\sA_5,\sA_6)=[6,+\infty)$, which corresponds to Remark~\ref{gsf=0}.
    \item $\gsf{\mathrm{sum}}{\x}{\alpha}=\mu_{(4)}=0.5$ for any $\alpha\in[\sA_4,\sA_5)=[3,6)$.
    \item $[\sA_3,\sA_4)=[3,3)=\emptyset$.
    \item $\gsf{\mathrm{sum}}{\x}{\alpha}=\mu_{(1)}=0.5$ for any $\alpha\in[\sA_2,\sA_3)=[2,3)$.
    \item $\gsf{\mathrm{sum}}{\x}{\alpha}=\mu_{(1)}=0.5$ for any $\alpha\in[\sA_1,\sA_2)=[1,2)$.
    \item $\gsf{\mathrm{sum}}{\x}{\alpha}=\mu_{(0)}=1$ for any $\alpha\in[\sA_0,\sA_1)=[0,1)$.
\end{itemize*}
\noindent So we get
\begin{align*}
    \gsf{\mathrm{sum}}{\x}{\alpha}&=\bin_{[0,1)}(\alpha)+0.5\cdot\bin_{[1,2)}(\alpha)+0.5\cdot\bin_{[2,3)}(\alpha)+0.5\cdot\bin_{[3,6)}(\alpha)\\&=\bin_{[0,1)}(\alpha)+0.5\cdot\bin_{[1,6)}(\alpha)\text{,}
\end{align*}
$\alpha\in[0,+\infty)$.
\end{example}



The~second approach to computing generalized survival function values is based on an (ascending) arrangement~\eqref{Fi} of measure values. Similarly, as in Remark~\ref{gsf=0}, let us point out first where the generalized survival function achieves zero value.

\begin{remark}\label{A1c}
 From arrangement of $\mu$, see~\eqref{Fi}, we have that 
$\sA_{\inv{0}}\in\{\sA_{\inv{j}}:\mu_j=0,\,j\in\ozn{\kappa-1}\}$.
Thus, for any $\alpha\in[\sA_{\inv{0}},+\infty)$ it holds $\gsf{}{\x}{\alpha}=\mu_0=0$.
\end{remark}

Compared to the first approach presented in~Theorem~\ref{zjednodusenie_def}, here in the second approach we look for intervals at which monotone measure values $\mu(F_j)$ (i.e.\ generalized survival function values) are achieved. It is worth noting that the first approach is appropriate to use if the number of aggregation operator values is much smaller than the number of measure values. In the opposite case, it is more effective to use the second approach. 

\begin{theorem}\label{gsf2}
Let $\cA$ be a FCA, $\mu\in\mathbf{M}$, $\x\in[0,+\infty)^{[n]}$. Then 
\begin{enumerate}[{\rm(i)}]
\item for any $j\in\ozn{\kappa-1}$ it holds that $$\gsf{}{\x}{\alpha} = \mu_j\,\,\ \textrm{for any}\,\,\alpha\in\Big[\min_{k\leq j}\sA_{\inv{k}},\min_{k<j}\sA_{\inv{k}}\Big);$$ 
\item \belowdisplayskip=0pt \abovedisplayskip=0pt\parbox{\linewidth}{
\begin{align}\label{vyjgsf2}
\gsf{}{\x}{\alpha}=\sum_{j=0}^{\kappa-1}\mu_j\bin_{\big[\min\limits_{k\leq j}\sA_{\inv{k}},\min\limits_{k<j}\sA_{\inv{k}}\big)}(\alpha)\,\,\ \textrm{for any}\,\,\alpha\in[0,+\infty).
\end{align}}
\end{enumerate}
\end{theorem}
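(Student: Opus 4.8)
The plan is to first rewrite the defining minimum so that it ranges over the arranged measure values $\mu_j$ rather than over the sets $E\in\cE$. Putting $F=E^c$ and recalling the shortcut $\sA_{\inv{j}}=\aA[F_j^c]{\x}$, the defining identity~\eqref{predpis gsf} turns into
\[
\gsf{}{\x}{\alpha}=\min\{\mu_j: \sA_{\inv{j}}\leq\alpha,\ j\in\ozn{\kappa-1}\}.
\]
Because the values are arranged in nondecreasing order $\mu_0\leq\mu_1\leq\dots\leq\mu_{\kappa-1}$ by~\eqref{Fi}, this minimum is attained at the smallest index $j^*$ for which $\sA_{\inv{j^*}}\leq\alpha$. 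Such an index always exists: the set $F_j=[n]$ belongs to $\hat{\cE}$ (since $\emptyset\in\cE$) and gives $\sA_{\inv{j}}=\aA[\emptyset]{\x}=0\leq\alpha$, so the minimum is taken over a nonempty index set. Hence $\gsf{}{\x}{\alpha}=\mu_{j^*}$ with $j^*=\min\{j:\sA_{\inv{j}}\leq\alpha\}$, and everything reduces to locating $j^*$.

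For part~(i) I would fix $j$ and take $\alpha\in[\min_{k\leq j}\sA_{\inv{k}},\min_{k<j}\sA_{\inv{k}})$, then pin down $j^*=j$ from the two endpoints. The left inequality $\alpha\geq\min_{k\leq j}\sA_{\inv{k}}$ guarantees that some $k\leq j$ is admissible, whence $j^*\leq j$; the right inequality $\alpha<\min_{k<j}\sA_{\inv{k}}$ says that no index $k<j$ is admissible, whence $j^*\geq j$. Together these force $j^*=j$, so $\gsf{}{\x}{\alpha}=\mu_{j^*}=\mu_j$. When the interval is empty the assertion is vacuous; it is nonempty precisely when $\sA_{\inv{j}}$ is a strict new minimum among $\sA_{\inv{0}},\dots,\sA_{\inv{j}}$, which is exactly the case that contributes in the sum.

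For part~(ii), writing $m_j=\min_{k\leq j}\sA_{\inv{k}}$ and adopting the convention $m_{-1}=\min\emptyset=+\infty$, the intervals of~(i) are $[m_j,m_{j-1})$; since $m_0\geq m_1\geq\dots\geq m_{\kappa-1}$ they are pairwise disjoint and telescope to $[m_{\kappa-1},+\infty)$. The decisive point is that $m_{\kappa-1}=\min_{k}\sA_{\inv{k}}=0$, again because $\aA[\emptyset]{\x}=0$ occurs among the $\sA_{\inv{k}}$, so the intervals partition the whole half-line $[0,+\infty)$. Evaluating $\gsf{}{\x}{\cdot}$ on each interval by~(i) and summing the associated indicator functions then yields~\eqref{vyjgsf2}. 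I expect the only genuine care to lie in part~(i): faithfully translating the two interval endpoints into the inequalities $j^*\leq j$ and $j^*\geq j$ in the presence of possible ties among the $\mu_j$, and in part~(ii) checking that the telescoping starts at $0$ rather than at some positive value.
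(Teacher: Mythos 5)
Your proof is correct and follows essentially the same route as the paper's: both arguments exploit the ordering~\eqref{Fi} to reduce everything to showing that on $\bigl[\min_{k\leq j}\sA_{\inv{k}},\min_{k<j}\sA_{\inv{k}}\bigr)$ some index $\leq j$ is admissible while no index $<j$ is, the paper phrasing this as a two-sided bound on the minimum and you as the identification $j^*=j$ of the least admissible index. Your treatment is in fact slightly more complete, since you handle $j=0$ uniformly via the convention $\min\emptyset=+\infty$ and spell out the disjointness-and-covering argument behind part~(ii), which the paper dismisses as following directly from~(i).
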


\proof
For $j=0$ we have, that $\Big[\min_{k\leq 0}\sA_{\inv{k}},\min_{k<0}\sA_{\inv{k}}\Big)=[\sA_{\inv{0}},+\infty)$.
Then, from Remark~\ref{A1c} is easy to see, that for any $\alpha\in[\sA_{\inv{0}},+\infty)$ it holds $\gsf{}{\x}{\alpha}=\mu_0$.
Let us take an arbitrary (fixed) $j\in[\kappa-1]$. The case when $\min_{k\leq j}\sA_{\inv{k}}=\min_{k<j}\sA_{\inv{k}}$ is trivial. Let us suppose that
$\min_{k\leq j}\sA_{\inv{k}}\neq\min_{k<j}\sA_{\inv{k}}$. Then from the fact that
$$\min\Big\{\min_{k<j}\sA_{\inv{k}},\sA_{\inv{j}}\Big\}=\min_{k\leq j}\sA_{\inv{k}}<\min_{k<j}\sA_{\inv{k}},$$ 
we have $\min_{k\leq j}\sA_{\inv{k}}=\sA_{\inv{j}}$. Further, $\min_{k<j}\sA_{\inv{k}}\leq\sA_{\inv{l}}$ for each $l\in[j-1]$. Then for any $\alpha$ such that
$$\sA_{\inv{j}}=\min_{k\leq j}\sA_{\inv{k}}\leq\alpha<\min_{k<j}\sA_{\inv{k}}\leq\sA_{\inv{l}}$$
with $l\in[j-1]$ we have
\begin{itemize}
 \item $\sA_{\inv{j}}\in\{\aA[E]{\x}\leq\alpha,\, E\in\collectionGSF\}$, therefore $\mu_j\in\{\mu(E^c):\aA[E]{\x}\leq\alpha,\,E\in\collectionGSF\}$. Thus, $$\min\{\mu(E^c):\aA[E]{\x}\leq\alpha,\,E\in\collectionGSF\}\leq\mu_j.$$
    \item $\sA_{\inv{l}}\notin\{\aA[E]{\x}\leq\alpha,\, E\in\collectionGSF\}$, therefore $\mu_{l}\notin\{\mu(E^c):\aA[E]{\x}\leq\alpha,\,E\in\collectionGSF\}$ and from the ordering \eqref{Fi} we have $\mu(E^c)\geq \mu_j$ for each $E\in\collectionGSF$ such that $\aA[E]{\x}\leq\alpha$, therefore
    $$\min\{\mu(E^c):\aA[E]{\x}\leq\alpha,\,E\in\collectionGSF\}\geq\mu_j.$$
   
\end{itemize}
The formula in~(ii) directly follows from~(i).
\qed

\begin{example}\label{priklad2}\rm Let us consider the same inputs as in Example~\ref{graphical_representation}. According to Remark~\ref{A1c}, $\gsf{\mathrm{sum}}{\x}{\alpha}=0$ for any $\alpha\in[\sA_{\inv{0}},+\infty)=[6,+\infty)$.
\begin{itemize*}
\item $\gsf{\mathrm{sum}}{\x}{\alpha}=\mu_1=0.5.$ for any $\alpha\in\big[\sA_{\inv{1}},\sA_{\inv{0}}\big)=[3,6)$.
\item $\gsf{\mathrm{sum}}{\x}{\alpha}=\mu_2=0.5$ for any $\alpha\in\big[\sA_{\inv{2}},\sA_{\inv{1}}\big)=[1,3)$.
\item $\gsf{\mathrm{sum}}{\x}{\alpha}=\mu_3=0.5$ for any $\alpha\in\big[\sA_{\inv{2}},\sA_{\inv{2}}\big)=\emptyset$.
\item $\gsf{\mathrm{sum}}{\x}{\alpha}=\mu_4=0.7$ for any $\alpha\in\big[\sA_{\inv{2}},\sA_{\inv{2}}\big)=\emptyset$.
\item $\gsf{\mathrm{sum}}{\x}{\alpha}=\mu_5=1$ for any $\alpha\in\big[\sA_{\inv{5}},\sA_{\inv{2}}\big)=[0,1)$.
\end{itemize*}
Therefore, the generalized survival function has the form
\begin{align*}
    \gsf{\mathrm{sum}}{\x}{\alpha}&=0.5\cdot\bin_{[3,6)}(\alpha)+0.5\cdot\bin_{[1,3)}(\alpha)+\bin_{[0,1)}(\alpha)\\&=\bin_{[0,1)}(\alpha)+0.5\cdot\bin_{[1,6)}(\alpha)\text{,}
\end{align*}
$\alpha\in[0,+\infty)$, compare with Example~\ref{priklad1}.
\end{example}

The following expressions of generalized survival functions are w.r.t.\ special measures. Their~detailed proofs are in Appendix.

\begin{corollary}\label{specimiery} Let $\x\in[0,+\infty)^{[n]}$.
\begin{enumerate}[\rm (i)]
\item Let $\cA$ be FCA and $\bar{\mu}$ be the greatest monotone measure, i.e., $\bar{\mu}(F)=0$, if $F=\emptyset$ and $\bar{\mu}(F)=1$, otherwise. Then the generalized survival function w.r.t.\ $\bar{\mu}$ takes the form $$\bar{\mu}_{\cA}({\x},{\alpha})=\bin_{[0,\aA[{[n]}]{\x})}(\alpha).$$
\
\item Let $\cA$ be FCA and $\mmu$ be the weakest monotone measure, i.e., $\mmu(F)=1$, if $F=[n]$ and $\mmu(F)=0$, otherwise. Then the generalized survival function w.r.t.\ $\mmu$ takes the form $$\mmu_\cA({\x},{\alpha})=\bin_{\big[0,\min\limits_{E\neq\emptyset}\aA[E]{\x}\big)}(\alpha).$$

\item Let $\cA$ be FCA monotone w.r.t.\ sets with $\cE=2^{[n]}$. Let $\mu$ be a symmetric measure, i.e., $\mu(F)=\mu(G)$ for $F,G\in 2^{[n]}$ such that $|F|=|G|$ (see e.g.~\cite{MirandaGrabischGil2002}). Let us set $\mu^i:=\mu(F)$, if $|F|=i$, $i\in[n]_0$. Then the generalized survival function w.r.t.\ $\mu$ takes the form $$\mu_\cA({\x},{\alpha})=\sum_{i=0}^n \mu^i\bin_{\big[\min\limits_{|E|=n-i}\aA[E]{\x},\min\limits_{|E|=n-i+1}\aA[E]{\x}\big)}(\alpha).$$

\item Let $\cA$ be FCA monotone w.r.t.\ sets with $\cE=2^{[n]}$. Let $\Pi$ be a possibility measure given for any $F\subseteq[n]$ as $\Pi(F)=\max_{i\in F}\pi(i)$. 
The function $\pi\colon [n] \to [0, 1]$, $\pi(i) =\Pi (\{i\})$ is called a possibility distribution (of $\Pi$), see e.g.~\cite{ZADEH19999}. Let $\sigma\colon[n]\to[n]$ be a permutation such that $0=\pi(\sigma(0))\leq\pi(\sigma(1))\leq\dots\leq\pi(\sigma(n))=1$ and $G_{\sigma(i)}=\{\sigma(i),\dots,\sigma(n)\}$ for $i\in\{1,\dots,n\}$ with $G_{\sigma(n+1)}=\emptyset$ and $\aA[G_{\sigma(0)}]{\x}=\infty$. Then the generalized survival function w.r.t.\ $\Pi$ takes the form $$\Pi_{\cA}(\x,\alpha)=\sum_{i=0}^n\pi(\sigma(i))\bin_{\big[\aA[G_{\sigma(i+1)}]{\x},\aA[G_{\sigma(i)}]{\x}\big)}(\alpha).$$

\item Let $\cA$ be FCA monotone w.r.t.\ sets with $\cE=2^{[n]}$. Let $\mathrm{N}$ be a necessity measure given for any $F\subseteq[n]$ as $\mathrm{N}(F)=1-\max_{i\notin F} \pi(i)$ with the convention that the maximum of empty set is $0$. Let  $\sigma\colon[n]\to[n]$ be a permutation given as in the previous case. Then the generalized survival function w.r.t.\ $N$ takes the form $$N_{\cA}(\x,\alpha)=\sum_{i=0}^n\big(1-\pi(\sigma(i))\big)\bin_{\big[\min\limits_{k\geq i}\aA[\{\sigma(k)\} ]{\x},\min\limits_{k> i}\aA[\{\sigma(k)\}]{\x}\big)}(\alpha),$$ where we need the convention $\min\limits_{k\geq 0}\aA[\{\sigma(k)\} ]{\x}=0$.

\end{enumerate}
\end{corollary}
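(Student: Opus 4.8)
The plan is to derive all five formulas directly from the definition \eqref{predpis gsf}, i.e.\ from $\mu_\cA(\x,\alpha)=\min\{\mu(E^c):\aA[E]{\x}\leq\alpha,\,E\in\cE\}$, by identifying, for each special measure, which feasible sets $E$ (those with $\aA[E]{\x}\leq\alpha$) realize the minimal value of $\mu(E^c)$, and then reading off the $\alpha$-intervals on which this minimal value stays constant. Throughout I will use that $\emptyset,[n]\in\cE$ with $\aA[\emptyset]{\x}=0$, so that $E=\emptyset$ is always feasible, and in parts (iii)--(v) that $\cA$ is monotone w.r.t.\ sets, i.e.\ $E\subseteq F$ implies $\aA[E]{\x}\leq\aA[F]{\x}$.

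Parts (i) and (ii) are the two-valued cases and are essentially immediate. For the greatest measure $\bar\mu$ one has $\bar\mu(E^c)=0$ exactly when $E=[n]$ and $\bar\mu(E^c)=1$ otherwise; hence the minimum is $0$ precisely when $[n]$ is feasible, that is when $\alpha\geq\aA[{[n]}]{\x}$, and equals $1$ otherwise (the empty set being always feasible with $\bar\mu(\emptyset^c)=\bar\mu([n])=1$), which yields $\bin_{[0,\aA[{[n]}]{\x})}(\alpha)$. Dually, for the weakest measure $\mmu$ one has $\mmu(E^c)=0$ for every nonempty $E$ and $\mmu(\emptyset^c)=1$; thus the minimum is $0$ as soon as some nonempty $E$ is feasible, i.e.\ when $\alpha\geq\min_{E\neq\emptyset}\aA[E]{\x}$, and is $1$ below that threshold, giving $\bin_{[0,\min_{E\neq\emptyset}\aA[E]{\x})}(\alpha)$. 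For (iii), symmetry and monotonicity let me write $\mu(E^c)=\mu^{n-|E|}$ with $\mu^0\leq\dots\leq\mu^n$, so minimizing $\mu(E^c)$ over feasible $E$ amounts to \emph{maximizing} $|E|$ subject to feasibility. Setting $s_m=\min_{|E|=m}\aA[E]{\x}$, monotonicity of $\cA$ w.r.t.\ sets (every $(m+1)$-set contains an $m$-set) gives $0=s_0\leq s_1\leq\dots\leq s_n$, so the feasible cardinalities form an initial segment $\{0,\dots,M\}$ with $M$ the largest index satisfying $s_M\leq\alpha$; on $[s_{n-i},s_{n-i+1})$ one has $M=n-i$, whence the minimal value is $\mu^{n-M}=\mu^i$, and with the convention $s_{n+1}=+\infty$ this reproduces the stated sum.

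Parts (iv) and (v) share a reduction step, which I expect to be the main obstacle. For the possibility measure $\mu(E^c)=\Pi(E^c)=\max_{i\notin E}\pi(i)$, the key claim is that it suffices to minimize over the tail sets $G_{\sigma(m)}=\{\sigma(m),\dots,\sigma(n)\}$: given any feasible $E$, if $\sigma(j)$ is the $\pi$-largest index outside $E$ then $G_{\sigma(j+1)}\subseteq E$ is feasible by monotonicity of $\cA$ and satisfies $\Pi(G_{\sigma(j+1)}^c)=\pi(\sigma(j))=\Pi(E^c)$. Along the nesting $G_{\sigma(1)}\supseteq\dots\supseteq G_{\sigma(n+1)}=\emptyset$ the values $\aA[G_{\sigma(m)}]{\x}$ are nonincreasing while $\Pi(G_{\sigma(m)}^c)=\pi(\sigma(m-1))$ is nondecreasing, so the minimizing tail is the smallest feasible one; this is exactly $G_{\sigma(i+1)}$ for $\alpha\in[\aA[G_{\sigma(i+1)}]{\x},\aA[G_{\sigma(i)}]{\x})$, giving value $\pi(\sigma(i))$ (the conventions $G_{\sigma(n+1)}=\emptyset$ and $\aA[G_{\sigma(0)}]{\x}=\infty$ handling the extreme intervals). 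For the necessity measure $N(E^c)=1-\max_{i\in E}\pi(i)$, the analogous reduction is to \emph{singletons}: any feasible $E$ whose $\pi$-maximum is attained at $\sigma(k)\in E$ can be replaced by the feasible singleton $\{\sigma(k)\}$ with the same value $1-\pi(\sigma(k))$. The extra difficulty here is that the singleton aggregation values $\aA[\{\sigma(k)\}]{\x}$ are \emph{not} ordered by $k$; minimizing $1-\pi(\sigma(k))$ over feasible singletons therefore means selecting the largest $k$ with $\aA[\{\sigma(k)\}]{\x}\leq\alpha$, and expressing the set of such $\alpha$ through the running minima $\min_{k\geq i}\aA[\{\sigma(k)\}]{\x}$ and $\min_{k>i}\aA[\{\sigma(k)\}]{\x}$ yields precisely the half-open intervals in the statement, with the convention $\min_{k\geq 0}\aA[\{\sigma(k)\}]{\x}=0$ accounting for the always-feasible empty set. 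The crux in both cases is justifying the restriction to the structured subfamily, and in (v) additionally handling the non-monotone placement of the singleton values.
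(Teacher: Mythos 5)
Your proof is correct, but it takes a genuinely different route from the paper's. You argue directly from the definition~\eqref{predpis gsf}: for each special measure you identify, among the feasible sets $\{E\in\cE:\aA[E]{\x}\leq\alpha\}$, a structured subfamily (all of $\cE$ in (i)--(ii), cardinality classes in (iii), the nested tails $G_{\sigma(m)}$ in (iv), singletons in (v)) that realizes the minimum of $\mu(E^c)$, and then read off the $\alpha$-intervals on which the optimizer is constant. The paper instead derives all five formulas from the general machinery of Theorem~\ref{gsf2}: it constructs, for each case, a carefully ordered bijection $F_*$ compatible with~\eqref{Fi} (ordering the $F_j$ by cardinality in (iii), by tail structure in (iv)--(v)), introduces auxiliary index maps ($\omega_*,\omega^*$, $\tau_*,\tau^*$, $\rho_*,\rho^*$ picking the first and last index of each measure-level block), shows via FCA monotonicity that the running minima $\min_{k\leq j}\sA_{\inv{k}}$ collapse to the stated aggregation values, and takes unions of the resulting intervals. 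Your reductions --- in (iv) that any feasible $E$ may be replaced by the tail $G_{\sigma(j+1)}\subseteq E$ (with $j$ the largest $\sigma$-position outside $E$, which also settles ties in $\pi$) without changing $\Pi(E^c)$, and in (v) that any feasible nonempty $E$ may be replaced by a feasible singleton with the same necessity value --- do exactly the work that the paper's block-index bookkeeping does, but more transparently: one sees \emph{why} the optimal set has the stated form. What the paper's route buys is uniformity: all five cases become instances of one computational theorem, with no case-specific optimality argument, at the cost of the heavier bijection-and-index apparatus. Your treatment of the delicate points is sound: the initial-segment structure of feasible cardinalities in (iii) (from $s_0\leq\dots\leq s_n$, which needs FCA monotonicity), the handling of possibly empty intervals when consecutive aggregation values coincide, and in (v) the observation that the nondecreasing running minima $\min_{k\geq i}\aA[\{\sigma(k)\}]{\x}$ convert the non-monotone singleton values into the half-open intervals of the statement, with the convention $\min_{k\geq 0}\aA[\{\sigma(k)\}]{\x}=0$ absorbing the always-feasible empty set exactly as the paper's conventions do.
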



Let us compare both approaches to calculate the generalized survival function. Comparing Example~\ref{priklad1} and Example~\ref{priklad2} one can observe that the domain partition of the generalized survival function (the interval $[0,\infty)$) in Example~\ref{priklad1} is a superset of the domain partition of the same generalized survival function in Example~\ref{priklad2}. This observation, we show, holds in general.  As a consequence, we get that the number of nonzero summands in expression~\eqref{vyjgsf2} is less than in expression~\eqref{vyjgsf1}. The proof can be found in Appendix.

\begin{proposition}\label{porovnanie}
Let $\cA$ be a FCA, $\mu\in\mathbf{M}$, $\x\in[0,+\infty)^{[n]}$.
For each $i\in\ozn{\kappa-1}$ there exists $j$ such that $[\sA_i,\sA_{i+1})\subseteq [\min_{k\leq j}\sA_{\inv{k}}, \min_{k< j}\sA_{\inv{k}})$.
\end{proposition}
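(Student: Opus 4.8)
The plan is to recognise the statement as saying that the partition of $[0,+\infty)$ underlying the first approach (Theorem~\ref{zjednodusenie_def}) \emph{refines} the one underlying the second approach (Theorem~\ref{gsf2}); the claimed inclusion of intervals is then automatic. The breakpoints of the first partition are exactly the entries of the arrangement~\eqref{Ei}, i.e.\ the elements of the finite value set $P:=\{\aA[E]{\x}:E\in\cE\}\cup\{+\infty\}$, while the breakpoints of the second partition are the numbers $\min_{k\leq j}\sA_{\inv{k}}$ together with $+\infty$. Hence the entire argument reduces to a single claim: \emph{every breakpoint of the second partition belongs to $P$.}

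First I would prove this claim. Fix $j$. Being the minimum of finitely many reals, $\min_{k\leq j}\sA_{\inv{k}}$ equals $\sA_{\inv{k_0}}$ for some $k_0\leq j$; and since $\sA_{\inv{k_0}}=\aA[F_{k_0}^c]{\x}$ with $F_{k_0}\in\hat{\cE}$, we have $F_{k_0}^c\in\cE$, so this value is one of the $\aA[E]{\x}$ and thus lies in $P$. The remaining breakpoint $\min_{k<0}\sA_{\inv{k}}=+\infty$ lies in $P$ by convention. Note also that the second-approach intervals telescope, as $\min_{k<j}\sA_{\inv{k}}=\min_{k\leq j-1}\sA_{\inv{k}}$, and cover $[0,+\infty)$; this is precisely what makes~\eqref{vyjgsf2} a valid expression on all of $[0,+\infty)$.

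Next, fix $i\in\ozn{\kappa-1}$. If $\sA_i=\sA_{i+1}$ the interval $[\sA_i,\sA_{i+1})$ is empty and any $j$ works, so assume $\sA_i<\sA_{i+1}$. Since the second-approach intervals cover $[0,+\infty)$, the point $\sA_i$ lies in precisely one of them, say the $j$-th, giving $\min_{k\leq j}\sA_{\inv{k}}\leq\sA_i<\min_{k<j}\sA_{\inv{k}}$. It then remains only to verify $\sA_{i+1}\leq\min_{k<j}\sA_{\inv{k}}$. I would argue by contradiction: if $r:=\min_{k<j}\sA_{\inv{k}}$ satisfied $r<\sA_{i+1}$, then $\sA_i<r<\sA_{i+1}$, whereas by the claim $r=\sA_m$ for some index $m$; this places an entry of the arrangement~\eqref{Ei} strictly between the two consecutive entries $\sA_i$ and $\sA_{i+1}$, which is impossible (if $m\leq i$ then $r=\sA_m\leq\sA_i$, and if $m\geq i+1$ then $r=\sA_m\geq\sA_{i+1}$). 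Consequently $[\sA_i,\sA_{i+1})\subseteq[\min_{k\leq j}\sA_{\inv{k}},\min_{k<j}\sA_{\inv{k}})$, as required.

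The only place needing genuine care — the main obstacle, such as it is — is the bookkeeping around ties and the symbol $+\infty$: the arrangement~\eqref{Ei} may repeat values, so ``consecutive'' must be read through the indices $i,i+1$ rather than the numerical values, and one must track when the minima $\min_{k\leq j}\sA_{\inv{k}}$ are finite. Once one observes that both minima run over subfamilies of the same finite value set $\{\aA[E]{\x}:E\in\cE\}$, monotonicity together with the ``no entry strictly between consecutive entries'' fact makes the inclusion fall out.
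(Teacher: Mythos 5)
Your proof is correct, but it takes a genuinely different route from the paper's. The paper chooses the index $j$ explicitly in terms of the measure, namely $j=\min\{l:\sA_{\inv{l}}\leq\sA_i\ \text{and}\ \mu_l=\min_{k\leq i}\mu_{(k)}\}$, i.e.\ the smallest index in the arrangement~\eqref{Fi} whose measure value equals the value of the generalized survival function on $[\sA_i,\sA_{i+1})$; it then proves $\min_{k<j}\sA_{\inv{k}}\geq\sA_{i+1}$ by contradiction using formula~\eqref{vyjgsf1} of Theorem~\ref{zjednodusenie_def}: a witness $k^*<j$ with $\sA_{\inv{k^*}}\leq\sA_i$ would have $\mu_{k^*}<\mu_j$ and would force the survival function strictly below $\mu_j$ on that interval. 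Your argument never touches the measure or the survival function at all: you take $j$ to be the unique tile of the second partition containing $\sA_i$, and you exclude $\min_{k<j}\sA_{\inv{k}}<\sA_{i+1}$ by the purely order-theoretic observation that every breakpoint $\min_{k\leq l}\sA_{\inv{k}}$ is itself some value $\sA_m$ of the arrangement~\eqref{Ei}, hence cannot sit strictly between the consecutive entries $\sA_i$ and $\sA_{i+1}$. What your route buys: it is more elementary (independent of Theorems~\ref{zjednodusenie_def} and~\ref{gsf2}) and more general, since it works for an arbitrary permutation $\inv{\cdot}$ of the indices --- the monotone arrangement~\eqref{Fi} of $\mu$ plays no role. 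What the paper's route buys: it simultaneously identifies the measure value of the containing tile, $\mu_j$, as exactly the survival-function value $\min_{k\leq i}\mu_{(k)}$ attained on $[\sA_i,\sA_{i+1})$, which directly ties the refinement statement to the two computational formulas the proposition is meant to compare. One small point you should make explicit rather than delegate to the validity of~\eqref{vyjgsf2}: the tiling of $[0,+\infty)$ by the intervals $\big[\min_{k\leq j}\sA_{\inv{k}},\min_{k<j}\sA_{\inv{k}}\big)$ requires that the leftmost breakpoint $\min_{k\leq\kappa-1}\sA_{\inv{k}}$ equals $0$, which follows because $\inv{\cdot}$ is a bijection of $\ozn{\kappa-1}$ and $\emptyset\in\cE$ gives $\sA_0=0$ in~\eqref{Ei}.
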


\section{Permutations and visualization}\label{druhy_sposob}

Both formulas in Section~\ref{prvy_sposob}, presented in Theorem~\ref{zjednodusenie_def}, \ref{gsf2}
ask to minimize the~values of either monotone measure or aggregation operator. 
The~main aim of the~present section is to show that the  computation of the generalized survival function (and the~minimization process) may be accomplished just on a set of integer indices of the~values of monotone measure and aggregation operator.
Moreover, the~whole procedure may be visualized, so the~computation of the~generalized survival function becomes easily accessible. The~main tools are several functions on a~set of indices~$\ozn{\kappa-1}$.

\subsection{Computation via indices}\label{subsec: indices}
Let us introduce the main tool that we shall work with in this subsection. In accordance with~\eqref{EiAi},~\eqref{Fi} one can see that
each set from the~collection~$\cE$ appears in the~basic enumeration $E_*\colon \ozn{\kappa-1}\to\cE$ once and then its complement appears again in the~basic enumeration $F_*\colon \ozn{\kappa-1}\to\hat{\cE}$. More precisely, if $V\in\cE$ then there are unique indices $i,j\in\ozn{\kappa-1}$ such that 
\begin{center}
$V=E_i$ and $V^c=F_j$.    
\end{center}
By going through all the sets from~$\cE$ we can define a permutation~$(\cdot)\colon\ozn{\kappa-1}\to\ozn{\kappa-1}$ which will describe connections among these two indices. More precisely,
the~basic permutation, essential for this subsection, $(\cdot)\colon\ozn{\kappa-1}\to\ozn{\kappa-1}$ is defined by 
\begin{center}
$(i)=j$ whenever $E_i=F_j^c$,    
\end{center}
i.e., $F_{(i)}=E_i^c$. Thus, the~permutation $(\cdot)$ is in accordance with the~previously adopted notation since $\mu_{(i)}=\mu(F_{(i)})=\mu(E_i^c)$. Let us illustrate the computation of the~permutation $(\cdot)$.

\begin{example}
 Let us consider the same input data as in Example~\ref{graphical_representation}.  Then the index space is $\ozn{5}$.
  Thus $(\cdot)$ is a~map from~$\ozn{5}$ to $\ozn{5}$. To calculate the~value $(0)$ we need the~set $E_0=\emptyset$. Its complement is $\{1,2,3\}$, which has index $5$ in the~enumeration~$F_*$, i.e., $F_5=\{1,2,3\}$. Thus $(0)=5$. Similarly, to compute~$(1)$, we see that $E_1=\{3\}$ and $E_1^c=\{1,2\}=F_2$. Thus $(1)=2$. Continuing in a~similar fashion, we obtain all the~values, 
\begin{center}
$(2)=4$, $(3)=1$, $(4)=3$, $(5)=0$.     
\end{center}
\end{example}

The arrangement from previous example may be graphically represented by a~diagram or by graphs, see Figure~\ref{obr1}.
Each of these representations has certain advantages.
In diagram, the~domain of~$(\cdot)$ is the~lower axis, while the~codomain is the~upper one. Thus the~indices of the~aggregation operator are in the~lower axis, the~indices in the~upper axis correspond to the~values of the~monotone measure.  From a practical point of view, the axes are reversed order.  To describe all visualizations deeply, let us point out that the index assignment process can be seen also reciprocally, i.e., let us define $\inv{\cdot}\colon\ozn{\kappa-1}\to\ozn{\kappa-1}$ \begin{center}
$\inv{j}=i$ whenever $E_i^c=F_j$,    
\end{center}
i.e., $E_{\inv{j}}=F_{j}^c$. It is easy to check that $\inv{\cdot}=(\cdot)^{-1}$. 


\begin{figure}[H]
    \centering
     \begin{subfigure}[b]{0.46\textwidth}
         \centering
         \raisebox{0.5\height}{\includegraphics[scale=1.1]{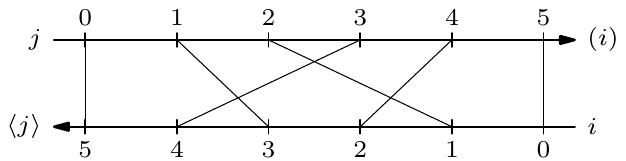}}
         \caption{Diagram of $(\cdot)$, resp.\ $\inv{\cdot}=(\cdot)^{-1}$}
         \label{obr1a}
     \end{subfigure}
     \begin{subfigure}[b]{0.26\textwidth}
         \centering
         \includegraphics[scale=1.1]{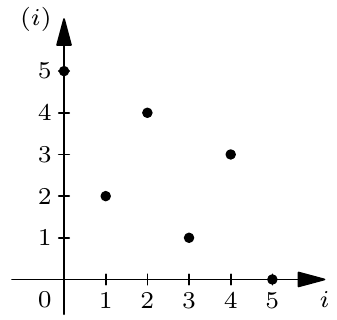}
         \caption{Graph of $(\cdot)$}
         \label{ge_sur_func}
     \end{subfigure}
     \begin{subfigure}[b]{0.26\textwidth}
         \centering
         \includegraphics[scale=1.1]{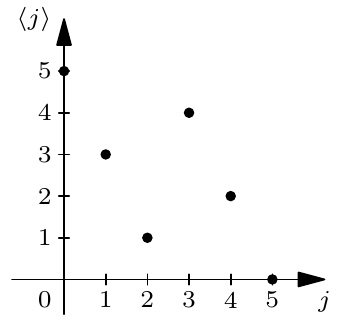}
         \caption{Graph of $\inv{\cdot}=(\cdot)^{-1}$}
         \label{ge_sur_func_2}
    \end{subfigure}
        \caption{Diagram and graphs of functions~$(\cdot)$, $\inv{\cdot}$ for data from Example~\ref{graphical_representation}}
    \label{obr1}
\end{figure}

Once comparing Definition~\ref{def: gsf} of the~generalized survival function and the~map~$(\cdot)$, one can see that the~assignment~$(\cdot)$ is a~part of the~formula defining the~generalized survival function. More precisely, we have 
\[
\gsf{}{\x}{\alpha}=\min\left\{\mu_{(i)}: \sA_i\leq\alpha,\, i\in\ozn{\kappa-1}\right\},\quad\alpha\in[0,+\infty).
\]
Then the~whole computation in the~previous formula may be visualised via Figure~\ref{obr1a}. Indeed, once we need to compute~$\gsf{}{\x}{\alpha}$ for a given $\alpha$, we can find the~largest index~$i$ such that $\sA_i\leq\alpha$. Afterwards we need to consider all the~indices on the~upper axis in
Figure~\ref{obr1a} adjacent with the~indices greater than or equal to~$i$ (the~right-hand side indices with respect to~$i$ on the lower axis). Finally, the~minimization the~values of monotone measure of sets with the~selected indices on the~upper axis leads to the~value $\gsf{}{\x}{\alpha}$. However, the~application of the~assignment~$(\cdot)$ goes far beyond the~latter observations.

To understand the~real contribution of~$(\cdot)$, we need to analyse crossing-overs in Figure~\ref{obr1a}. The crossing-overs can be in only two cases:\begin{itemize}\item if $\textstyle\min_{k\leq i}\mu_{(k)}<\mu_{(i)}$, then the value $\mu_{(i)}$ is not achieved by the generalized survival function because of Theorem~\ref{zjednodusenie_def};
\item if $\mu_{(k)}=\mu_{(l)}$, $k,l\in\ozn{\kappa-1}$, where $k<l$ and $(k)<(l)$, which corresponds to the ambiguity of the arrangement~\eqref{Ei}, or~\eqref{Fi}.
\end{itemize}
These connections can be removed or redefined in an appropriate manner without changing the formula of generalized survival functions. Thus let us redefine the mapping~$(\cdot)$ in a manner that will be beneficial for us. Let us define a~map $\bi\colon\ozn{\kappa-1}\to\ozn{\kappa-1}$ by 
\begin{align}\label{ii}
\bi(i)=\min\{(0),\dots,(i)\}.
\end{align} 
The previous mapping will shorten a lot of further expressions, and calculations. It will be useful in Section~\ref{Choquet} in deriving the generalized Choquet integral formulas.

\begin{figure}[H]
    \begin{center}
    \includegraphics[scale=1.2]{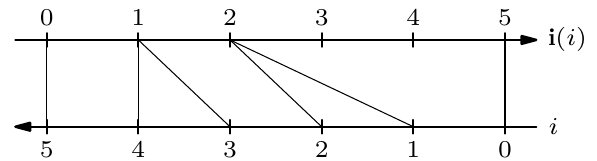}
    \caption{Diagram of the map~$\bi$ for data from Example~\ref{graphical_representation}}\label{obr2}
    \end{center}
\end{figure}
Figure~\ref{obr2} contains a~diagram describing function~$\bi$ computed for data from Example~\ref{graphical_representation}. Although one can use formula~\eqref{ii} directly and compute~$\bi$ algebraically, it can be easily obtained also from the~diagram in Figure~\ref{obr1a}. Indeed, since $(4)=3$ and $(3)=1$, the~corresponding edges in Figure~\ref{obr1a} are crossed-over. It is just enough to eliminate this crossing-over by redefining $\bi(4)=1$. Similarly for crossing-over, which corresponds to values $(1)$ and $(2)$. After its elimination we obtain~$\bi$.

\begin{lema}\label{vl_i}
Let $\cA$ be a FCA, $\mu\in\mathbf{M}$, $\x\in[0,+\infty)^{[n]}$, and 
$\bi$ be a map given in~(\ref{ii}). Then for any $i\in\ozn{\kappa-1}$ it holds
$$\min_{k\leq i} \mu_{(k)}=\mu_{\bi(i)}\text{.}$$
\end{lema}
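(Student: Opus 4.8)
The plan is to reduce the claim to the single structural fact that the arrangement~\eqref{Fi} makes the assignment $j\mapsto\mu_j$ nondecreasing on the index set $\ozn{\kappa-1}$. Once this monotonicity is available, the identity becomes a routine statement about where a nondecreasing sequence attains its minimum over a prescribed set of indices, and no use of the FCA axioms, the vector $\x$, or the aggregation values $\sA_i$ is required.

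First I would rewrite the left-hand side in terms of the $F$-enumeration. By the definition of the permutation $(\cdot)$ we have $\mu_{(k)}=\mu(F_{(k)})=\mu(E_k^c)$, so the quantity $\min_{k\leq i}\mu_{(k)}$ is precisely the minimum of the values $\mu_j$ as $j$ ranges over the finite index set $S_i:=\{(0),(1),\dots,(i)\}$. Since $\bi(i)=\min S_i$ by~\eqref{ii}, the lemma is thus equivalent to the assertion $\min_{j\in S_i}\mu_j=\mu_{\min S_i}$.

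Next I would invoke monotonicity. Because~\eqref{Fi} orders the measure values as $0=\mu_0\leq\mu_1\leq\dots\leq\mu_{\kappa-1}$, the map $j\mapsto\mu_j$ is nondecreasing. For any nonempty finite $S\subseteq\ozn{\kappa-1}$ with smallest element $m=\min S$, every $j\in S$ satisfies $m\leq j$ and hence $\mu_m\leq\mu_j$; as $m\in S$ itself, the minimum is attained at $m$, giving $\min_{j\in S}\mu_j=\mu_m$. Applying this with $S=S_i$ and $m=\bi(i)$ yields the claim.

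The step that deserves the most care --- though it is hardly a genuine obstacle --- is keeping the two index conventions straight: the outer index $k$ running over $\ozn{i}$ on the left is transported through the permutation $(\cdot)$ before being fed into the nondecreasing family $\mu_\bullet$, and monotonicity may be invoked only after this transport. The entire content of the lemma is that the minimization over the $k$-indices can be pushed inside to a minimization over the $j$-indices, where the ordering~\eqref{Fi} makes it trivial.
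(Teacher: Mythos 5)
Your proof is correct and follows essentially the same route as the paper's: rewrite $\min_{k\leq i}\mu_{(k)}$ as a minimum of $\mu_j$ over the index set $\{(0),\dots,(i)\}$ and then use the nondecreasing arrangement~\eqref{Fi} to conclude that this minimum is attained at the smallest index, namely $\bi(i)$. The paper compresses this into a single chain of equalities; you merely make the monotonicity step explicit, which is a matter of exposition, not substance.
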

\proof
Because of the definition of $(\cdot)$ and from the arrangement~(\ref{Fi}) we get equalities 
$$\min_{k\leq i} \mu_{(k)}=\min\{\mu_l:l=(k), k\leq i\}=\mu_{\min\{(0),\dots,(i)\}}=\mu_{\bi(i)}\text{.}$$
Thus we get the required result.
\qed

Via the~assignment~$\bi$, the formula of the~generalized survival function  can be described directly, without the~need to minimize, compare the following corollary with Theorem~\ref{zjednodusenie_def}.

\begin{corollary}\label{application_i}
Let $\cA$ be a FCA, $\mu\in\mathbf{M}$, $\x\in[0,+\infty)^{[n]}$, and 
$\bi$ be a map given in~(\ref{ii}). Then
\begin{align}\label{restated_formula}
\gsf{}{\x}{\alpha}=\sum_{i=0}^{\kappa-1} \mu_{\bi(i)}\bin_{[\sA_i,\sA_{i+1})}(\alpha)
\end{align}
for any $\alpha\in[0,+\infty)$.
\end{corollary}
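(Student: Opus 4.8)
The plan is to derive Corollary~\ref{application_i} as a direct consequence of Theorem~\ref{zjednodusenie_def}(ii) combined with Lemma~\ref{vl_i}. The starting point is formula~\eqref{vyjgsf1}, which expresses the generalized survival function as
\[
\gsf{}{\x}{\alpha}=\sum_{i=0}^{\kappa-1}\min_{k\leq i}\mu_{(k)}\,\bin_{[\sA_i,\sA_{i+1})}(\alpha)
\]
for any $\alpha\in[0,+\infty)$. The only thing that separates this from the claimed formula~\eqref{restated_formula} is the form of the coefficient in front of each indicator: in~\eqref{vyjgsf1} it is the minimum $\min_{k\leq i}\mu_{(k)}$, whereas in~\eqref{restated_formula} it is the single value $\mu_{\bi(i)}$.

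First I would invoke Lemma~\ref{vl_i}, which asserts precisely that for every index $i\in\ozn{\kappa-1}$ the equality $\min_{k\leq i}\mu_{(k)}=\mu_{\bi(i)}$ holds. Substituting this identity termwise into the sum in~\eqref{vyjgsf1} replaces each coefficient $\min_{k\leq i}\mu_{(k)}$ by $\mu_{\bi(i)}$, yielding exactly
\[
\gsf{}{\x}{\alpha}=\sum_{i=0}^{\kappa-1}\mu_{\bi(i)}\,\bin_{[\sA_i,\sA_{i+1})}(\alpha).
\]
Since the substitution is valid for each summand independently and for every $\alpha$, this completes the argument.

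The proof is therefore essentially a one-line substitution, and there is no genuine obstacle; the work has already been done in establishing Lemma~\ref{vl_i} (which in turn rests on the definition of the permutation $(\cdot)$ and the ordering~\eqref{Fi}) and Theorem~\ref{zjednodusenie_def}. If anything merits a word of care, it is only to note that the indicator functions $\bin_{[\sA_i,\sA_{i+1})}$ partition $[0,+\infty)$ up to the empty intervals arising when $\sA_i=\sA_{i+1}$, so that the value of the sum at any fixed $\alpha$ is determined by the unique summand whose interval contains $\alpha$; the termwise replacement of coefficients thus does not interfere with the pointwise evaluation. The upshot, as the surrounding text emphasizes, is that the minimization present in Theorem~\ref{zjednodusenie_def} has been absorbed into the combinatorial map $\bi$, so that~\eqref{restated_formula} computes the generalized survival function without any explicit minimization over $\mu$-values.
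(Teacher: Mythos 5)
Your proposal is correct and follows exactly the paper's own proof, which likewise obtains Corollary~\ref{application_i} by combining Theorem~\ref{zjednodusenie_def}(ii) with the termwise substitution $\min_{k\leq i}\mu_{(k)}=\mu_{\bi(i)}$ supplied by Lemma~\ref{vl_i}. The additional remark about empty intervals when $\sA_i=\sA_{i+1}$ is a harmless elaboration of the same argument.
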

\proof
The required result follows from Lemma~\ref{vl_i} and Theorem~\ref{zjednodusenie_def}.
\qed



The~difference between Theorem~\ref{zjednodusenie_def} and Theorem~\ref{gsf2} is in the~object being minimized. In Theorem~\ref{zjednodusenie_def} are minimized the~values of monotone measure, while in Theorem~\ref{gsf2} are minimized the~values of aggregation operator. Thus Corollary~\ref{application_i} is the~counterpart of Theorem~\ref{zjednodusenie_def} and naturally, the need to introduce a counterpart of Theorem~\ref{gsf2} arises. 
Similarly, as the formula for the generalized survival function can be rewritten via permutation $(\cdot)$, it can be also rewritten via permutation $\inv{\cdot}$ as follows
\[
\gsf{}{\x}{\alpha}=\min\left\{\mu_j: \sA_\inv{j}\leq\alpha,\, j\in\ozn{\kappa-1}\right\},\quad\alpha\in[0,+\infty).
\] 
%
Let us recall that $\inv{\cdot}=(\cdot)^{-1}\colon\ozn{\kappa-1}\to\ozn{\kappa-1}$, i.e., $\inv{(i)}=i,(\inv{j})=j$, and $E_\inv{j}=F_j^c$, i.e., $F_j=E_\inv{j}^c$. The~permutation $\inv{\cdot}$ is in accordance with the~previously adopted notation since $\sA_\inv{j}=\aA[E_\inv{j}]{\x}=\aA[F_j^c]{\x}$. Let us define a~map $\bj\colon\ozn{\kappa-1}\to\ozn{\kappa-1}$ as follows
\begin{align}\label{jj}
\bj(j)=\min\{\inv{0},\dots,\inv{j}\}.
\end{align}

\begin{remark}\label{nerastucost_i_j}
It is easy to see that for any $k\in\ozn{\kappa-1}$ it holds $\bi(k)\leq(k)$ and $\bj({k})\leq\inv{{k}}$. 
Moreover, the maps $\bi,\bj$ are nonincreasing.
\end{remark}

Similarly as in the case of the map $\bi$ we can shorten a lot of expressions, and calculations with the map $\bj$.
\begin{figure}[H]
    \begin{center}
    \includegraphics[scale=1.2]{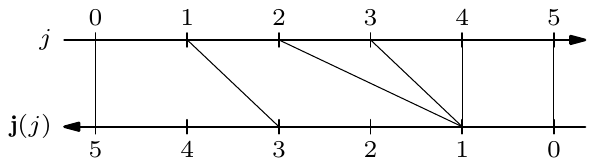}
    \caption{Diagram of the map~$\bj$ for data from Example~\ref{graphical_representation}.}\label{obr3}
    \end{center}
\end{figure}

The~corresponding diagram of~$\bj$ based on data from Example~\ref{graphical_representation} is depicted in Figure~\ref{obr3}. Note that similarly to the~diagram in Figure~\ref{obr2} it does not contain any crossing-over, and may be analogously created either directly using formula~\eqref{jj} or by eliminating crossings-over in the~diagram in Figure~\ref{obr1}. 

\begin{lema}\label{vl_j}
Let $\cA$ be a FCA, $\mu\in\mathbf{M}$, $\x\in[0,+\infty)^{[n]}$, and 
$\bj$ be a map given as in~(\ref{jj}). Then for any $j\in\ozn{\kappa-1}$ it holds
$$\min\limits_{k\leq j}\sA_{\inv{k}}=\sA_{\bj(j)}\text{.}$$
\end{lema}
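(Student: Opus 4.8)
The plan is to mirror the proof of Lemma~\ref{vl_i} almost verbatim, exchanging the roles of the measure values and the aggregation-operator values; the monotonicity of $\mu$ encoded in the arrangement~\eqref{Fi} is simply replaced by the monotonicity of the aggregated values encoded in~\eqref{Ei}. First I would recall that, by the definition of the permutation $\inv{\cdot}$ together with the shortcut notation, each term appearing in the minimum is nothing but a value of the arrangement $\sA_\bullet$ read off at the index $\inv{k}$, namely $\sA_{\inv{k}}=\aA[F_k^c]{\x}=\aA[E_{\inv{k}}]{\x}$.

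The core step is to convert the minimum taken over values into a minimum taken over indices. Since $k$ runs through $\{0,\dots,j\}$, the indices $\inv{k}$ run through the set $\{\inv{0},\dots,\inv{j}\}$, so that
$$\min_{k\leq j}\sA_{\inv{k}}=\min\big\{\sA_m:m\in\{\inv{0},\dots,\inv{j}\}\big\}.$$
Now comes the decisive observation: by the arrangement~\eqref{Ei} the values $\sA_0\leq\sA_1\leq\dots\leq\sA_{\kappa-1}$ are nondecreasing in their index, hence $\sA_m$ is smallest exactly when $m$ is smallest. Therefore the minimum is attained at $m=\min\{\inv{0},\dots,\inv{j}\}$, which equals $\bj(j)$ by the defining formula~\eqref{jj}, and we conclude
$$\min_{k\leq j}\sA_{\inv{k}}=\sA_{\min\{\inv{0},\dots,\inv{j}\}}=\sA_{\bj(j)}.$$

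I expect no genuine obstacle here, since the statement is the exact counterpart of Lemma~\ref{vl_i}. The only point requiring care is to invoke the numerical ordering~\eqref{Ei} of the values $\sA_i$, and not any monotonicity of the map $E\mapsto\aA[E]{\x}$ w.r.t.\ sets: the sets $F_k^c=E_{\inv{k}}$ need not be nested as $k$ grows, so it is precisely the guaranteed ascending arrangement of the real numbers $\sA_i$ that legitimizes passing from the minimum of the values to the value at the minimal index.
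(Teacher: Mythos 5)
Your proposal is correct and follows essentially the same route as the paper's own proof: both convert $\min_{k\leq j}\sA_{\inv{k}}$ into a minimum over the index set $\{\inv{0},\dots,\inv{j}\}$, invoke the nondecreasing arrangement~\eqref{Ei} to identify the minimum value with the value at the minimal index, and then apply the definition~\eqref{jj} of $\bj$. Your closing remark—that it is the numerical ordering of the reals $\sA_i$, not any set-monotonicity of the aggregation operators, that justifies the step—is a correct and worthwhile clarification, though the paper leaves it implicit.
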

\proof
Because of the definition of $(\cdot)$ and from the arrangement~(\ref{Ei}) we immediately have
$$\min\limits_{k\leq j}\sA_\inv{k}=\min\{\sA_l: l=\inv{k},k\leq j\}
=\sA_{\min\{\inv{0},\dots,\inv{j}\}} =\sA_{\bj(j)}\text{,}$$
thus we get the required result.
\qed

From the formula~(\ref{vyjgsf2}) we get the new expression of the generalized survival function in~the~following form.

\begin{corollary}\label{application_j}
Let $\cA$ be a FCA, $\mu\in\mathbf{M}$, $\x\in[0,+\infty)^{[n]}$, and 
$\bj$ be a map given as in~(\ref{jj}). Then 
\begin{align}\label{restated_formula_2}
\gsf{}{\x}{\alpha}=\sum_{j=0}^{\kappa-1}\mu_j\bin_{\big[\sA_{\bj(j)},\sA_{\bj(j-1)}\big)}(\alpha)
\end{align}
for any $\alpha\in[0,+\infty)$ with the convention $\sA_{\bj(-1)}=+\infty$. 
\end{corollary}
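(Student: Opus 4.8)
The plan is to read the result directly off Theorem~\ref{gsf2}(ii) by replacing the two minima that delimit each interval with the closed forms supplied by Lemma~\ref{vl_j}. Theorem~\ref{gsf2}(ii) already expresses $\gsf{}{\x}{\alpha}$ as a sum over $j\in\ozn{\kappa-1}$ of terms $\mu_j$ supported on the interval whose left endpoint is $\min_{k\leq j}\sA_{\inv{k}}$ and whose right endpoint is $\min_{k<j}\sA_{\inv{k}}$; hence it suffices to identify each endpoint with the appropriate value $\sA_{\bj(\cdot)}$, whereupon~\eqref{restated_formula_2} follows termwise.

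For the left endpoint, Lemma~\ref{vl_j} gives $\min_{k\leq j}\sA_{\inv{k}}=\sA_{\bj(j)}$ for every $j\in\ozn{\kappa-1}$, which is exactly the lower limit appearing in~\eqref{restated_formula_2}. For the right endpoint and $j\geq1$, the minimum runs over the index set $\{0,\dots,j-1\}$, so it coincides with the quantity treated by Lemma~\ref{vl_j} with $j$ replaced by $j-1$; thus $\min_{k<j}\sA_{\inv{k}}=\min_{k\leq j-1}\sA_{\inv{k}}=\sA_{\bj(j-1)}$. The only point that needs attention is the boundary term $j=0$: there the index set $\{k:k<0\}$ is empty, so by the convention $\min\emptyset=+\infty$ fixed at the beginning of Section~\ref{sec: motivation} the right endpoint equals $+\infty$, matching the convention $\sA_{\bj(-1)}=+\infty$ stated in the corollary. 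Substituting both endpoints into Theorem~\ref{gsf2}(ii) yields~\eqref{restated_formula_2}.

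I expect no genuine obstacle: the argument is a direct substitution, and the only real care is the bookkeeping of the empty-index case at $j=0$, which the stated convention $\sA_{\bj(-1)}=+\infty$ is precisely designed to absorb. To reassure the reader one may recall that Lemma~\ref{vl_j} itself rests only on the monotonicity of the arrangement~\eqref{Ei}, so that minimizing the values $\sA_l$ over a set of indices is the same as evaluating $\sA$ at the smallest index of that set; this same observation is what justifies the passage to $\sA_{\bj(j-1)}$ for the right endpoint above.
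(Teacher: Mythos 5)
Your proposal is correct and follows exactly the paper's route: the paper likewise proves Corollary~\ref{application_j} by substituting the identity of Lemma~\ref{vl_j} into the formula of Theorem~\ref{gsf2}(ii). Your additional care with the $j=0$ boundary term (the empty minimum matching the convention $\sA_{\bj(-1)}=+\infty$) is a detail the paper leaves implicit but is entirely consistent with its argument.
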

\proof
The required result follows from a Lemma~\ref{vl_j} and Theorem~\ref{gsf2}.
\qed


Let us conclude this subsection with the~observation that if $(\cdot)$ is decreasing, then formulas~\eqref{vyjgsf1},~\eqref{vyjgsf2} and~\eqref{restated_formula},~\eqref{restated_formula_2} will be simplified.

\begin{corollary}\label{dosledok_formuly_bez_i_a_j}
Let $\cA$ be a FCA, $\mu\in\mathbf{M}$, $\x\in[0,+\infty)^{[n]}$.  If the mapping $(\cdot)$ is decreasing, then 
\begin{align*}
\gsf{}{\x}{\alpha}&=\sum_{i=0}^{\kappa-1}\mu_{\kappa-1-i}\cdot\bin_{\big[\sA_i,\sA_{i+1}\big)}(\alpha)=\sum_{j=0}^{\kappa-1}\mu_j\bin_{\big[\sA_{\kappa-1-j},\sA_{\kappa-j}\big)}(\alpha)
\end{align*}
for any $\alpha\in[0,+\infty)$.
\end{corollary}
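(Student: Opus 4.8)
The plan is to derive Corollary~\ref{dosledok_formuly_bez_i_a_j} as a direct specialization of Corollary~\ref{application_i} and Corollary~\ref{application_j} under the hypothesis that the permutation~$(\cdot)$ is decreasing. First I would record what ``$(\cdot)$ decreasing'' means explicitly: since $(\cdot)\colon\ozn{\kappa-1}\to\ozn{\kappa-1}$ is a permutation of $\{0,1,\dots,\kappa-1\}$, being decreasing forces the unique order-reversing bijection, namely $(i)=\kappa-1-i$ for every $i\in\ozn{\kappa-1}$. Consequently its inverse satisfies $\inv{j}=\kappa-1-j$ as well, since the order-reversing bijection is an involution.

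Next I would compute the auxiliary maps $\bi$ and $\bj$ under this assumption. Because $(i)=\kappa-1-i$ is itself already nonincreasing in~$i$, the running minimum collapses: $\bi(i)=\min\{(0),\dots,(i)\}=(i)=\kappa-1-i$. Plugging this into formula~\eqref{restated_formula} of Corollary~\ref{application_i} immediately yields
\[
\gsf{}{\x}{\alpha}=\sum_{i=0}^{\kappa-1}\mu_{\bi(i)}\bin_{[\sA_i,\sA_{i+1})}(\alpha)=\sum_{i=0}^{\kappa-1}\mu_{\kappa-1-i}\bin_{[\sA_i,\sA_{i+1})}(\alpha),
\]
which is the first claimed equality. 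Symmetrically, since $\inv{j}=\kappa-1-j$ is nonincreasing in~$j$, the running minimum in~\eqref{jj} again collapses to $\bj(j)=\inv{j}=\kappa-1-j$, and then $\bj(j-1)=\kappa-1-(j-1)=\kappa-j$, consistent with the convention $\sA_{\bj(-1)}=\sA_{\kappa}=+\infty$ already adopted in Corollary~\ref{application_j} and in the arrangement~\eqref{Ei}. Substituting into formula~\eqref{restated_formula_2} gives
\[
\gsf{}{\x}{\alpha}=\sum_{j=0}^{\kappa-1}\mu_j\bin_{[\sA_{\bj(j)},\sA_{\bj(j-1)})}(\alpha)=\sum_{j=0}^{\kappa-1}\mu_j\bin_{[\sA_{\kappa-1-j},\sA_{\kappa-j})}(\alpha),
\]
the second claimed equality.

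I do not anticipate a serious obstacle here, as the statement is essentially a bookkeeping specialization. The only point requiring a little care is justifying that ``$(\cdot)$ decreasing'' pins down $(i)=\kappa-1-i$ uniquely rather than merely some strictly decreasing map; this follows because a strictly decreasing self-map of a finite chain must be the reversal, and one should note that strictly decreasing and (weakly) decreasing coincide for a permutation. A second detail worth a sentence is checking that the endpoint conventions match: the index $i=0$ term in the first sum carries $\mu_{\kappa-1}$ on $[\sA_0,\sA_1)=[0,\sA_1)$, and the $j=\kappa-1$ term in the second sum carries $\mu_{\kappa-1}$ on $[\sA_0,\sA_1)$, so both collapse to the same summand, confirming the two right-hand sides agree termwise after reindexing via $j=\kappa-1-i$. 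With these remarks the result follows immediately from the two cited corollaries, so the proof is short.
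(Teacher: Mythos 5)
Your proof is correct and follows essentially the same route as the paper's: both specialize Corollaries~\ref{application_i} and~\ref{application_j} using the facts that a decreasing permutation $(\cdot)$ forces $\bi=(\cdot)$, $\bj=\inv{\cdot}$ and $(i)=\kappa-1-i$, $\inv{j}=\kappa-1-j$. The only difference is order of presentation (you pin down $(i)=\kappa-1-i$ first and then collapse the running minima, while the paper does the reverse), which is immaterial.
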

\begin{proof}
If the map $(\cdot)$ is decreasing, then $\inv{\cdot}$ is also decreasing and it holds $(\cdot)=\bi$, $\inv{\cdot}=\bj$. Then expressions~\eqref{restated_formula},~\eqref{restated_formula_2}  are simplified
\begin{align*}
\gsf{}{\x}{\alpha}=\sum_{i=0}^{\kappa-1} \mu_{(i)}\bin_{[\sA_i,\sA_{i+1})}(\alpha)=\sum_{j=0}^{\kappa-1}\mu_j\bin_{\big[\sA_{\inv{j}},\sA_{\inv{j-1}}\big)}(\alpha).
\end{align*}
Further, the map $(\cdot)$ is decreasing if and only if for any $i\in\ozn{\kappa-1}$ it holds $(i)=\kappa-1-i$, or  equivalently for any $j\in\ozn{\kappa-1}$ it holds $\inv{j}=\kappa-1-j$, thus $(\cdot)=\inv{\cdot}$. This completes the proof.
\qed
\end{proof}

\begin{example}\label{example_bez_i_a_j}
Let us consider the collection $\cE=\{\emptyset, \{1\},\{3\},\{1,2\},\{1,2,3\}\}$, the family of conditional aggregation operators  $\cA^{\mathrm{sum}}=\{\aAi[E]{\cdot}{\mathrm{sum}}:E\in\cE\}$, the vector $\x=(2,3,4)$, and the monotone measure $\mu$ on $\hat{\cE}$ with values $\mu(\emptyset)=0$, $\mu(\{3\})=0.3$, $\mu(\{1,2\})=0.5$, $\mu(\{2,3\})=0.8$, $\mu(\{1,2,3\})=1$.
\begin{table}[H]
\renewcommand*{\arraystretch}{1.2}
\centering
\begin{tabular}{|n|o|o|o|o|o|}
\hline
$i,j$ & 0 & 1 & 2 & 3 & 4  \\ \hline
$E_i$ & $\emptyset$ & $\{1\}$ & $\{3\}$ & $\{1,2\}$ & $\{1,2,3\}$ \\\hline
$\sA_i$ & $0$ & $2$ & $4$ & $5$ & $9$ \\\hline
$F_j$ & $\emptyset$ & $\{3\}$ & $\{1,2\}$ & $\{2,3\}$ & $\{1,2,3\}$ \\ \hline
$\mu_j$ & $0$ & $0.3$ & $0.5$ & $0.8$ & $1$ \\ \hline
 \end{tabular}
\end{table}
\noindent The diagram of functions $(\cdot)$ and $\inv{\cdot}$ can be seen in the Figure~\ref{obr_diagram_bez_i_j}.
\begin{figure}[h]
    \begin{center}
    \includegraphics[scale=1.2]{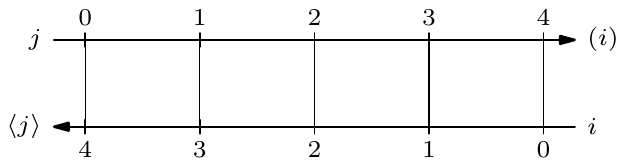}
    \caption{Diagram of $(\cdot)$ and $\inv{\cdot}$ from Example~\ref{example_bez_i_a_j}}\label{obr_diagram_bez_i_j}
    \end{center}
\end{figure}

\noindent It is easy to verify that the assumptions of Corollary~\ref{dosledok_formuly_bez_i_a_j} are satisfied.
According to this result, the generalized survival function has the form
$$\gsf{\mathrm{sum}}{\x}{\alpha}=1\cdot\bin_{[0,2)}(\alpha)+0.8\cdot\bin_{[2,4)}(\alpha)+0.5\cdot\bin_{[4,5)}(\alpha)+0.3\cdot\bin_{[5,9)}(\alpha)$$
for any $\alpha\in[0,+\infty)$.
\end{example}

From the previous one can see that the assumption of Corollary~\ref{dosledok_formuly_bez_i_a_j} can be graphically interpreted in the way that there are no crossing-overs in the diagrams, see Figure~\ref{obr_diagram_bez_i_j}.

In the following, we apply the~obtained formulas to the~solution of the~Knapsack problem described in~Section~\ref{sec: motivation}. 

\paragraph{Solution of the Knapsack problem}
Let us assume that a total of $800$\,ml of liquids are already packed in the knapsack.
Thus, it is still possible to pack $200$\,ml of liquids. Let us choose from the products listed in Table~\ref{batozina}. In the table, except for the volume of the products also their price is indicated.
\begin{table}[H]
    \centering
    \begin{tabular}{c|c|c|c|c}
         product & $a$ & $b$ & $c$ & $d$ \\\hline
         volume & $80$ & $75$ & $55$ & $65$ \\
         price & $1.2$ & $1$ & $0.6$ & $0.8$
    \end{tabular}
    \caption{List of products}
    \label{batozina}
\end{table}
\noindent As we have pointed out in Section~\ref{sec: motivation}, our goal is to minimize purchase costs for products that we shall need to buy at the holiday destination (we suppose prices are higher here). Moreover, we have to keep a limit of $200$\,ml of liquids we can carry, i.e., this is the limit for the total volume of products, we shall buy at home. We shall solve the optimization problem
$$\min\{\mu(E^c): \aAi[E]{\x}{\text{sum}}\leq200,\,E\in\collectionGSF\}\text{,}$$
with $\x=(80,75,55,65)$, the collection $\collectionGSF$ consists of all possible combinations of elements $a,b,c,d$, and the monotone measure $\mu$ represents the price of products. The values of $\mu$ together with the values of the conditional aggregation operator $\sA^{\mathrm{sum}}$ can be seen in Table~\ref{tabulka_batozina}.
\begin{table}[H]
\renewcommand*{\arraystretch}{1.2}
    \centering
    \small
    \begin{tabular}{c|c|c|c|c|ccc|c|c|c|c|c}
         $\mu_j$ & $\mu(E^c)$ & $E^c$ & $E$ & $\sA^\text{sum}$ & $\sA_i$ &\hspace*{0.3cm}& $\mu_j$ & $\mu(E^c)$ & $E^c$ & $E$ & $\sA^\text{sum}$ & $\sA_i$\\\cline{1-6}\cline{8-13}
         $\mu_{15}$ & $3.6$ & $\{a,b,c,d\}$ & $\emptyset$ & $0$ & $\sA_0$ && $\mu_9$ & $1.8$ & $\{a,d\}$ & $\{b,c\}$ & $130$ & $\sA_6$\\
         $\mu_{11}$ & $2.4$ & $\{b,c,d\}$ & $\{a\}$ & $80$ & $\sA_4$ && $\mu_7$ & $1.8$ & $\{a,c\}$ & $\{b,d\}$ & $140$ & $\sA_8$\\
         $\mu_{13}$ & $2.5$ & $\{a,c,d\}$ & $\{b\}$ & $75$ & $\sA_3$ && $\mu_{10}$ & $2.2$ & $\{a,b\}$ & $\{c,d\}$ & $120$ & $\sA_5$\\
         $\mu_{14}$ & $3.0$ & $\{a,b,d\}$ & $\{c\}$ & $55$ & $\sA_1$ && $\mu_2$ & $0.8$ & $\{d\}$ & $\{a,b,c\}$ & $210$ & $\sA_{13}$\\
         $\mu_{12}$ & $2.5$ & $\{a,b,c\}$ & $\{d\}$ & $65$ & $\sA_2$ && $\mu_1$ & $0.6$ & $\{c\}$ & $\{a,b,d\}$ & $220$ & $\sA_{14}$\\
         $\mu_5$ & $1.4$ & $\{c,d\}$ & $\{a,b\}$ & $155$ & $\sA_{10}$ && $\mu_3$ & $1.0$ & $\{b\}$ & $\{a,c,d\}$ & $200$ & $\sA_{12}$\\
         $\mu_8$ & $1.8$ & $\{b,d\}$ & $\{a,c\}$ & $135$ & $\sA_7$ && $\mu_4$ & $1.2$ & $\{a\}$ & $\{b,c,d\}$ & $195$ & $\sA_{11}$\\
         $\mu_6$ & $1.4$ & $\{b,c\}$ & $\{a,d\}$ & $145$ & $\sA_9$ && $\mu_0$ & $0$ & $\emptyset$ & $\{a,b,c,d\}$ & $275$ & $\sA_{15}$\\
    \end{tabular}
    \caption{Values of the conditional aggregation operator and corresponding monotone measure}
    \label{tabulka_batozina}
\end{table}
We can notice that in Table~\ref{tabulka_batozina} there are $15$ different values of the conditional aggregation operator and $12$ different values of the (nonadditive) monotone measure. To solve the knapsack problem we have to find the value of the generalized survival function at point $200$. This can be easily done using formula~\eqref{vyjgsf1} or~\eqref{restated_formula}. The value of $200$ lies in the interval $[\sA_{12},\sA_{13})$, where the generalized survival function takes the value
$$\min_{k\leq 12}\mu_{(k)}=\mu_{\bi(12)}=\mu_3=1\text{.}$$
The same result we get when we determine the whole formula of the generalized survival function using 
formula~\eqref{skrateniegsf2f}
\begin{align*}
   \gsf{\mathrm{sum}}{\x}{\alpha}&=3.6\cdot\mathbf{1}_{[0;55)}(\alpha)+3\cdot\mathbf{1}_{[55;65)}(\alpha)+2.5\cdot\mathbf{1}_{[65;80)}(\alpha)+2.4\cdot\mathbf{1}_{[80;120)}(\alpha)\\
   &+2.2\cdot\mathbf{1}_{[120;130)}(\alpha)+1.8\cdot\mathbf{1}_{[130;145)}(\alpha)+1.4\cdot\mathbf{1}_{[145;195)}(\alpha)+1.2\cdot\mathbf{1}_{[195;200)}(\alpha)\\
   &+\mathbf{1}_{[200;210)}(\alpha)+0.8\cdot\mathbf{1}_{[210;220)}(\alpha)+0.6\cdot\mathbf{1}_{[220;275)}(\alpha)\text{.}
\end{align*}

\noindent As we can see, the result coincides with the result from the previous calculation. So, a traveler should buy products $a,c,d$ at home and product $b$ at the destination.

\subsection{How to draw a~graph of a~generalized survival function}

In the previous section, we have presented visualizations of maps $(\cdot)$ and $\inv{\cdot}$ using both a diagram and a graph in the~Cartesian coordinate system, see demonstratory Figure~\ref{obr1} based on inputs from~Example~\ref{graphical_representation}.
We have already pointed out the advantages of the first visualization, and in the following we deal with the second one.
In fact, we shall show how the~graph of $(\cdot)$ or $\inv{\cdot}$ can be transformed to the~plot of the~generalized survival function.

The~first step is to transform the~graphs of $(\cdot)$ or $\inv{\cdot}$, see Figure~\ref{ge_sur_func}\subref{ge_sur_func_2}, into the~graphs of the~mappings $\bi$ or $\bj$, see Figure~\ref{image_i(i)_and_j(j)}, decreasing some of the~values of $(\cdot)$ and $\inv{\cdot}$ to produce nonincreasing functions $\bi$ and $\bj$, respectively. More precisely, the~formulas~\eqref{ii} and~\eqref{jj} are used to compute $\bi$ and $\bj$ from $(\cdot)$ and $\inv{\cdot}$, respectively.

\begin{figure}[H]
    \begin{center}
    \includegraphics[scale=1.2]{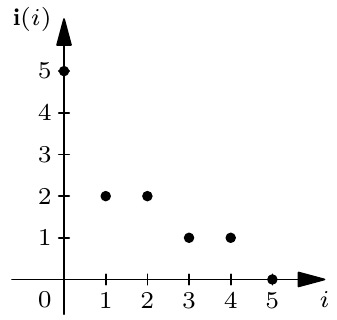}\hspace{50pt}
    \includegraphics[scale=1.2]{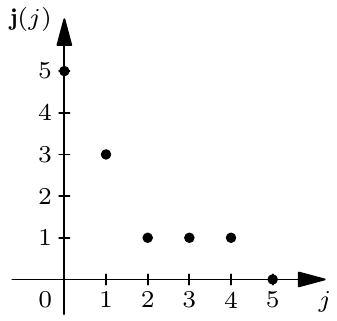}
    \caption{Graphs of $\bi$ and $\bj$.}\label{image_i(i)_and_j(j)}
    \end{center}
\end{figure}

The~second step is to extend the~domain of~$\bi$ from $\ozn{\kappa-1}$ to $[0,+\infty)$, and obtain the~graph of the~\textit{indexed generalized survival function} $\gsf{I}{\x}{\beta}$. It is enough to naturally define $\gsf{I}{\x}{\beta}=\bi(\lfloor\beta\rfloor)$ for $\beta<\kappa-1$ and $\gsf{I}{\x}{\beta}=\bi(\kappa-1)$ otherwise, as is done in~Figure~\ref{indexed_survival_function}(a). There is similar way to obtain the~indexed generalized survival function from the~graph of~$\bj$, depicted in~Figure~\ref{indexed_survival_function}(b), which we describe later.

\begin{figure}[H]
     \centering
     \begin{subfigure}[b]{0.49\textwidth}
         \centering
          \includegraphics[scale=1.1]{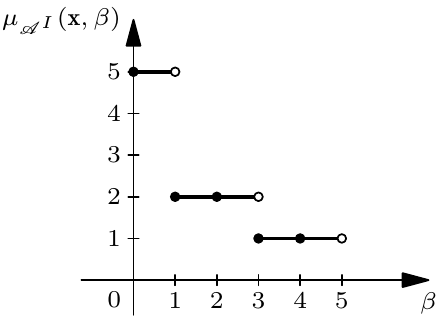}
         \caption{Indexed generalized survival function (using $\bi$)}
         \label{ind_gen_sur_func}
     \end{subfigure}
     \begin{subfigure}[b]{0.49\textwidth}
         \centering
         \includegraphics[scale=1.1]{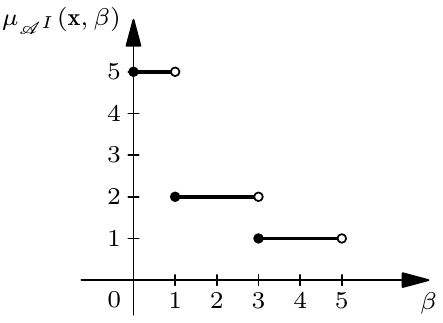}
         \caption{Indexed generalized survival function (using $\bj$)}
         \label{ind_gen_sur_func_2}
     \end{subfigure}
        \caption{Indexed generalized survival functions deriving by using maps $\bi$ and $\bj$}
        \label{indexed_survival_function}
\end{figure}

Before we proceed to the~last step, let us stress that Figure~\ref{indexed_survival_function} of~$\gsf{I}{\x}{\beta}$ is very close to how the~real generalized survival function $\gsf{}{\x}{\alpha}$ depicted in Figure~\ref{priradenie} looks like. Vaguely speaking, the~only difference is that the~graph of~$\gsf{}{\x}{\alpha}$ has values $\sA_0,\sA_1,\dots,\sA_{\kappa-1}$ on the~horizontal axis instead of values $0,1,\dots,\kappa-1$, and values $\mu_0,\mu_1,\dots,\mu_{\kappa-1}$ on the~vertical axis again instead of values $0,1,\dots,\kappa-1$. Moreover, some values $\sA_i,\sA_j$ and $\mu_i,\mu_j$ may coincide even for different~$i,j$.

\begin{figure}[H]
    \centering
    \includegraphics[scale=1.1]{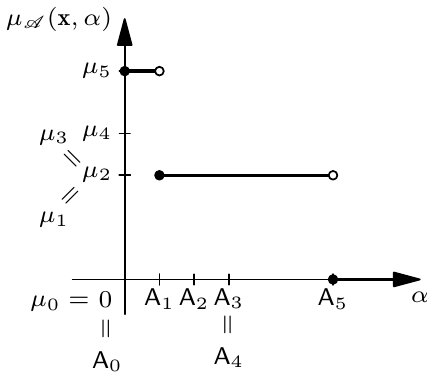}
    \caption{Generalized survival function}
    \label{priradenie}
\end{figure}

More formally, the~similarities between $\gsf{I}{\x}{\beta}$ and $\gsf{}{\x}{\alpha}$ may be understood once we represent the~latter one via the~formula~\eqref{restated_formula}, and the~first one in a~similar way, namely
\begin{align}\label{index_formula_1}
\gsf{}{\x}{\alpha}=\sum_{i=0}^{\kappa-1} \mu_{\bi(i)}\bin_{[\sA_i,\sA_{i+1})}(\alpha),\hspace{1cm}\gsf{I}{\x}{\beta}=\sum_{i=0}^{\kappa-2} \bi(i)\bin_{[i,\,i+1)}(\beta). 
\end{align}
The~equalities~\eqref{index_formula_1} are based on~function~$\bi$, but we may proceed similarly with function~$\bj$ using the~equality~\eqref{restated_formula_2}:
\begin{align}\label{index_formula_2}
\gsf{}{\x}{\alpha}=\sum_{i=0}^{\kappa-1} \mu_{\bi(i)}\bin_{[\sA_i,\sA_{i+1})}(\alpha),\hspace{1cm}\gsf{I}{\x}{\beta}&=\sum_{j=1}^{\kappa-1} j\bin_{[\bj(j),\,\bj(j-1))}(\beta). 
\end{align}
 The~equality~\eqref{index_formula_2} gives clues how to obtain the~graph of the~indexed generalized survival function~$\gsf{I}{\x}{\beta}$ from the~graph of the~map~$\bj$. There is necessary one more step, namely, one has to draw a~graph of the~generalized inverse~~$\bj^-$ from the~graph of~$\bj$ first. Then, similarly to the~case of~$\bi$, define $\gsf{I}{\x}{\beta}=\bj^-(\lfloor\beta\rfloor)$ for $\beta<\kappa-1$ and $\gsf{I}{\x}{\beta}=\bj^-(\kappa-1)$ otherwise, see~Figure~\ref{indexed_survival_function}(b).

Let us summarize the calculation of the indexed generalized survival function.
First, we construct the bijections $E_*$ a $F_*$ as it shown in \eqref{Ei} a \eqref{Fi}.
Then, using the permutation $(\cdot)\colon \ozn{\kappa-1}\to\ozn{\kappa-1}$ described above, we obtain the values of $\bi(i)$ and $\bj(i)$ for each $i\in\ozn{\kappa-1}$, and thus we can construct the indexed generalized survival function.
Finally, for each index $k\in\ozn{\kappa-1}$ on the horizotal, resp.\ vertical, axes we assign the value $\sA_k$, resp.\ $\mu_k$.
This entire process can be described by the following Algorithm~\ref{alg_GSF_from_i} and Algorithm~\ref{alg_GSF_from_j}.
Note that these algorithms assume the fact that the values of map $(\cdot)$, resp.\ $\inv{\cdot}$ are known to the user, or for completeness, we present  algorithms for its calculation in Appendix.

\bigskip
\noindent%
\begin{minipage}{0.49\textwidth}
\SetKwRepeat{Do}{do}{while}
\begin{algorithm}[H]
\Fn{the-graph-of-GSF$(\collectionGSF, \mu, \x, \cA)$}{
$(0),\dots,(\kappa-1)$ $\leftarrow$ the-$(\cdot)$-map$(\collectionGSF, \mu, \x, \cA)$\;
$GSF:=0$\;
\For{$(i=0$, $i<\kappa-1$, $i\mathrm{++})$}{%
    \If{$(i+1)>(i)$}{%
        $(i+1):=(i)$\;
    }
    $GSF:=GSF+\mu_{(i)}\cdot\bin_{[\sA_i,\sA_{i+1})}$\;
}
\Return{$GSF$\;
}}
\caption{Calculation of generalized survival function (GSF) using the map $\bi$
}
\label{alg_GSF_from_i}
\end{algorithm}
\end{minipage}
\hfill
\begin{minipage}{0.49\textwidth}
\SetKwRepeat{Do}{do}{while}
\begin{algorithm}[H]
\Fn{the-graph-of-GSF$(\collectionGSF, \mu, \x, \cA)$}{
$\inv{0},\dots,\inv{\kappa-1}$ $\leftarrow$ the-$\inv{\cdot}$-map$(\collectionGSF, \mu, \x, \cA)$\;
$GSF:=0$\;
\For{$(j=1$, $j<\kappa$, $j\mathrm{++})$}{%
    \If{$\inv{j-1}<\inv{j}$}{%
        $\inv{j}:=\inv{j-1}$\;
    }
    $GSF:=GSF+\mu_j\cdot\bin_{[\sA_\inv{j},\sA_\inv{j-1})}$\;
}
\Return{$GSF$\;
}}
\caption{Calculation of generalized survival function (GSF) using the map $\bj$
}
\label{alg_GSF_from_j}
\end{algorithm}
\end{minipage}

\bigskip
Note that although the maps $\bi$ and $\bj$ is not appear explicitly in the algorithms, their generation is hidden in their for loops.
In this part of algorithms, the indexed generalized survival function is also generated, while its standard version is immediately assigned to it. 
A zero value of generalized survival function can also be marked in the graph (Figure~\ref{priradenie}), which corresponds to Remark~\ref{gsf=0} and Remark~\ref{A1c}, but it is not necessary, since this value does not bring any new information.

\begin{remark}
The following relationships can be observed from formulas~\eqref{index_formula_1} and~\eqref{index_formula_2}.
Let $I=\{\bi(i):i\in\ozn{\kappa-1}\}$ and $J=\{\bj(j):j\in\ozn{\kappa-1}\}$, $j\in I$ and $i\in J$, then
\begin{align*}
\min\{k\in\ozn{\kappa-1}:\bj(k)=i\}&=\bi(i)\text{,}\\
\min\{k\in\ozn{\kappa-1}:\bi(k)=j\}&=\bj(j)\text{.}
\end{align*}
These equalities hold because of 
\begin{align*}
    \min\{k\in\ozn{\kappa-1}:\bj(k)=i\}&=\min\{k\in\ozn{\kappa-1}:\min\{\inv{0},\dots,\inv{k}\}=i\}\\&=\min\{k\in\ozn{\kappa-1}: \inv{k}=i\}=\min\{k\in\ozn{\kappa-1}: k=(i)\}\\&=\min\{(0),\dots,(i)\}=\bi(i)\text{.}
\end{align*}
The penultimate equality follows from the fact that $(\cdot)$ is nonincreasing, see Remark~\ref{nerastucost_i_j}.
Similarly for the second formula.
\end{remark}

\begin{remark}
It is also possible to use \eqref{index_formula_1} and~\eqref{index_formula_2}  with analogy to the formula \eqref{predpis gsf} for calculating the indexed generalized survival function as follows
\begin{align*}
\gsf{I}{\x}{\beta}&=\min\left\{(i): i\leq \beta,\, i\in\ozn{\kappa-1}\right\}=\bi(\beta)\\
&=\min\left\{j: \inv{j}\leq \beta,\, j\in\ozn{\kappa-1}\right\}=\bj^-(\beta)
\end{align*}
for any $\beta\in[0,+\infty)$ with respect to extended domain of $\bi$ and $\bj^-$, i.e.\ $[0,+\infty)$, described above.
\end{remark}

\section{Generalized Choquet integral computation}\label{Choquet}

The survival function measuring the $\alpha$-level set $\{f>\alpha\} $\footnote{$\{f>\alpha\}=\{x\in X:f(x)>\alpha\}$} of an arbitrary measurable function $f$ via monotone measure $\mu$ is the essence of the concept of the famous Choquet integral~\cite{Choquet1954} defined by $$\mathrm{C}(f,\mu)=\int_{0}^{\infty}\mu(\{f>\alpha\})\,\mathrm{d}\alpha.$$
On the finite spaces, the evaluation formula of the Choquet integral has the simplified form \begin{align}\label{Chformula}\mathrm{C}(\x,\mu)=\sum_{i=1}^{n} \mu{(G_{\sigma(i)})}(x_{\sigma{(i)}}-x_{\sigma{(i-1)}})=\sum_{i=1}^{n} x_{\sigma{(i)}}(\mu{(G_{\sigma(i)})}-\mu{(G_{\sigma(i+1)})}),\end{align} where $\x=(x_{\sigma(1)},x_{\sigma(2)},\dots,x_{\sigma(n)})$, with $\sigma\colon [n]\to[n]$ being a permutation such that $x_{\sigma(1)}\leq x_{\sigma(2)}\leq \dots\leq x_{\sigma(n)}$ with the convention $x_{\sigma(0)}=0$,
$G_{\sigma{(i)}}=\{\sigma{(i)},\dots,\sigma{(n)}\}$ for $i\in[n]$, and $G_{\sigma(n+1)}=\emptyset$. 

Based on generalized survival function, a new concept generalizing the Choquet integral naturally arises. 
In this section, we aim to provide formulas for discrete $\cA$-Choquet integral.

\begin{definition}\label{def: Chintegral}\rm(cf.~\cite[Definition 5.4.]{BoczekHalcinovaHutnikKaluszka2020})
Let $\cA$ be a FCA, $\mu\in\mathbf{M}$ and $\x\in[0,+\infty)^{[n]}$. The Choquet integral with respect to $\cA$ and $\mu$ ($\cA$-Choquet integral, for short) of $\x$ is defined as \begin{align}\label{Chint}\mathrm{C}_{\cA}(\x,\mu)=\int_{0}^{\infty}\gsf{}{\x}{\alpha}\,\mathrm{d}\alpha.\end{align}
\end{definition}

As we have obtained the generalized survival function expressions, see Sections~\ref{prvy_sposob},~\ref{druhy_sposob}, the computation of the $\cA$-Choquet integral becomes a trivial matter. Note that the formulas in the first two lines are obtained by means of mappings $\bi$ and $\bj$, respectively. By introducing these mappings, we managed to obtain expressions similar to the original one in~(\ref{Chformula}).

\begin{theorem}\label{vypocetCh}
Let $\cA$ be a FCA, $\mu\in\mathbf{M}$ and $\x\in[0,+\infty)^{[n]}$. Then
\begin{enumerate}[\rm (i)]
\item $\mathrm{C}_{\cA}(\x,\mu)=\sum_{i=0}^{\kappa-2}\mu_{\bi(i)}(\sA_{i+1}-\sA_i)=\sum_{i=0}^{\kappa-1}\sA_{i+1}\big(\mu_{\bi(i)}-\mu_{\bi(i+1)}\big)$\text{,}\,\hfill{\rm [cf. Corollary~\ref{application_i}]}\\
\item $\mathrm{C}_{\cA}(\x,\mu)=\sum_{i=1}^{\kappa-1}\mu_i\big(\sA_{\bj(i-1)}-\sA_{\bj(i)}\big)=\sum_{i=1}^{\kappa-1}\sA_{\bj(i-1)}(\mu_{i}-\mu_{i-1})$\text{,}\,\hfill{\rm [cf. Corollary~\ref{application_j}]}\\
\item $\mathrm{C}_{\cA}(\x,\mu)=\sum_{i=0}^{\kappa-2}\min_{k\leq i}\mu_{(k)}(\sA_{i+1}-\sA_i)=\sum_{i=0}^{\kappa-1}\sA_{i+1}\big(\min_{k\leq i}\mu_{(k)}-\min_{k\leq i+1} \mu_{(k)}\big)$\text{,}\,\hfill{\rm [cf. Theorem~\ref{zjednodusenie_def}]}\\

\item $\mathrm{C}_{\cA}(\x,\mu)=\sum_{i=1}^{\kappa-1}\mu_i\big(\min_{k<i} \sA_\inv{k}-\min_{k\leq i}\sA_\inv{k}\big)=\sum_{i=1}^{\kappa-1}\min_{k<i}\sA_\inv{k}(\mu_{i}-\mu_{i-1})$\text{.}\,\hfill{\rm [cf. Theorem~\ref{gsf2}]}\\




\end{enumerate}
\end{theorem}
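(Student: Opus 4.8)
The plan is to derive each of the four formulas directly from the integral definition~\eqref{Chint} by substituting the corresponding closed-form expression for the generalized survival function and integrating term by term. Since every version of $\gsf{}{\x}{\alpha}$ established earlier is a finite sum of constants multiplied by indicator functions of bounded intervals, the integral $\int_0^\infty\gsf{}{\x}{\alpha}\,\mathrm{d}\alpha$ reduces to a finite sum in which each summand is the constant times the length of its interval. This makes each of the eight equalities a routine bookkeeping computation once the right survival-function formula is inserted, so the bulk of the argument is choosing the correct input formula for each item.

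Concretely, for item~(i) I would start from Corollary~\ref{application_i}, that is $\gsf{}{\x}{\alpha}=\sum_{i=0}^{\kappa-1}\mu_{\bi(i)}\bin_{[\sA_i,\sA_{i+1})}(\alpha)$. Integrating, the contribution of the $i$-th summand is $\mu_{\bi(i)}(\sA_{i+1}-\sA_i)$; the $i=\kappa-1$ term vanishes because $\sA_{\kappa}=+\infty$ is paired with $\mu_{\bi(\kappa-1)}=\mu_0=0$ (the value on $[\sA_{\kappa-1},\infty)$ is zero by Remark~\ref{gsf=0}), which is why the sum may be truncated at $\kappa-2$. This gives the first equality. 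The second equality in~(i) is then an Abel summation (summation by parts): rewriting $\sum_i \mu_{\bi(i)}(\sA_{i+1}-\sA_i)$ by collecting the coefficient of each $\sA_{i+1}$ yields $\sum_i \sA_{i+1}(\mu_{\bi(i)}-\mu_{\bi(i+1)})$, where the telescoping uses $\sA_0=0$ and the boundary convention. Item~(iii) is identical but starts from Theorem~\ref{zjednodusenie_def}, replacing $\mu_{\bi(i)}$ by its unsimplified form $\min_{k\leq i}\mu_{(k)}$ throughout, which by Lemma~\ref{vl_i} is literally the same quantity.

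For items~(ii) and~(iv) I would instead integrate the $\bj$-based formula. Starting from Corollary~\ref{application_j}, $\gsf{}{\x}{\alpha}=\sum_{j=0}^{\kappa-1}\mu_j\bin_{[\sA_{\bj(j)},\sA_{\bj(j-1)})}(\alpha)$ with $\sA_{\bj(-1)}=+\infty$; the $j=0$ term contributes $\mu_0=0$ so the sum begins at $1$, and each summand integrates to $\mu_j(\sA_{\bj(j-1)}-\sA_{\bj(j)})$, giving the first equality of~(ii). Summation by parts in the index $j$ then produces $\sum_i \sA_{\bj(i-1)}(\mu_i-\mu_{i-1})$ for the second equality. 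Item~(iv) follows the same route from Theorem~\ref{gsf2}, using $\min_{k\leq j}\sA_{\inv{k}}=\sA_{\bj(j)}$ (Lemma~\ref{vl_j}) to recognise the two forms as equal.

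The only genuine care-points, rather than obstacles, are the boundary conventions and the direction of the telescoping: I must verify that the infinite-length interval $[\sA_{\kappa-1},\infty)$ (respectively $[\sA_{\bj(0)},\infty)$) always carries the survival value $0$ so that the integral is finite, and that the summation-by-parts boundary terms at $i=0$ and $i=\kappa-1$ vanish under the stated conventions $\sA_0=0$, $\sA_{\kappa}=+\infty$, $\sA_{\bj(-1)}=+\infty$. I expect the trickiest bookkeeping to be the Abel-summation step producing the second equality in each item, since one must track that $\mu_{\bi(\cdot)}$ is nonincreasing (Remark~\ref{nerastucost_i_j}) so that the rearranged differences $\mu_{\bi(i)}-\mu_{\bi(i+1)}$ are handled with the correct sign and index shift; everything else is direct substitution and integration of step functions.
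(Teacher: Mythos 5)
Your proposal is correct and follows exactly the route the paper intends: the paper omits the proof of Theorem~\ref{vypocetCh} as a ``trivial matter,'' indicating only (via the bracketed citations) that each item results from integrating the corresponding step-function expression of the generalized survival function from Corollary~\ref{application_i}, Corollary~\ref{application_j}, Theorem~\ref{zjednodusenie_def} and Theorem~\ref{gsf2}, which is precisely your term-by-term integration plus summation by parts. Your handling of the boundary conventions (the zero value $\mu_{\bi(\kappa-1)}=\mu_0=0$ on the unbounded interval, $\sA_0=0$, and $\sA_{\bj(\kappa-1)}=\sA_0=0$ in the telescoping) correctly supplies the bookkeeping the paper leaves implicit.
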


Let us recall that in Corollary~\ref{specimiery} we listed computational formulas for generalized survival function for special measures. Thus, Corollary~\ref{specimiery} helps us to substantially improve the~formulas presented in Theorem~\ref{vypocetCh} for special measures. 



\begin{corollary}\label{specimieryCh}
Let $x\in[0,+\infty)^{[n]}$.
\begin{enumerate}[\rm (i)]
\item Let $\cA$ be FCA and $\bar{\mu}$ be a greatest monotone measure. Then $$\mathrm{C}_{\cA}(\x,\mmu)=\aA[{[n]}]{\x}.$$ 
\item Let $\cA$ be FCA. Let $\mmu$ be a weakest monotone measure. Then $$\mathrm{C}_{\cA}(\x,\bar{\mu})=\min_{E\neq\emptyset}\aA[E]{\x}.$$
\item Let $\cA$ be FCA monotone w.r.t.\ sets with $\cE=2^{[n]}$. Let $\mu$ be a symmetric measure and set $\mu^i:=\mu(F)$, if $|F|=i$, $i\in[n]\cup\{0\}$. Then $$\mathrm{C}_{\cA}(\x,\mu)=\sum_{i=1}^n (\mu^i-\mu^{i-1})\min\limits_{|E|=n-i+1}\aA[E]{\x}.$$
\item Let $\cA$ be FCA monotone w.r.t.\ sets with $\cE=2^{[n]}$. Let $\Pi$ be a possibility measure. Let $G_{\sigma(i)}=\{\sigma(i),\dots,\sigma(n)\}$ for $i\in\{1,\dots,n\}$, where $\sigma$ is a permutation as in Corollary~\ref{specimiery}. Then $$\mathrm{C}_{\cA}(\x,\Pi)=\sum_{i=1}^n\left(\pi(\sigma(i))-\pi(\sigma(i-1))\right)\aA[G_{\sigma(i)}]{\x}.$$
\item Let $\cA$ be FCA monotone w.r.t.\ sets with $\cE=2^{[n]}$. Let $\mathrm{N}$ be a neccessity measure and $\sigma$ be a permutation as in Corollary~\ref{specimiery}. Then $$\mathrm{C}_{\cA}(\x,N)=\sum_{i=1}^n\big(\pi(\sigma(i))-\pi(\sigma(i-1))\big)
\min\limits_{k\geq i}\aA[\{\sigma(k)\}]{\x}.$$


\end{enumerate}
\end{corollary}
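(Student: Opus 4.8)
The plan is to integrate, term by term, the explicit expressions for the generalized survival function already recorded in Corollary~\ref{specimiery}. By Definition~\ref{def: Chintegral} we have $\mathrm{C}_{\cA}(\x,\mu)=\int_0^\infty \gsf{}{\x}{\alpha}\,\mathrm{d}\alpha$, and in each of the five cases Corollary~\ref{specimiery} writes $\gsf{}{\x}{\alpha}$ as a finite sum of scaled indicators $\bin_{[a,b)}$ of essentially adjacent intervals. As this is a nonnegative step function, the integral passes inside the finite sum, and the only elementary fact needed is $\int_0^\infty \bin_{[a,b)}(\alpha)\,\mathrm{d}\alpha=b-a$.

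First I would dispatch the two extreme measures. For the greatest measure the survival function is the single indicator $\bin_{[0,\aA[{[n]}]{\x})}$, whose integral is $\aA[{[n]}]{\x}$; for the weakest measure it is $\bin_{[0,\min_{E\neq\emptyset}\aA[E]{\x})}$, whose integral is $\min_{E\neq\emptyset}\aA[E]{\x}$. This settles cases (i) and (ii) at once.

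For the symmetric, possibility and necessity measures the term-by-term integration produces a sum of the shape $\sum_i c_i(b_i-a_i)$ in which consecutive blocks share an endpoint (the upper endpoint of one block coincides with the lower endpoint of the neighbouring block). To reach the stated form, where a \emph{difference} of coefficients multiplies a single aggregation value, I would apply summation by parts (Abel transformation): regrouping $\sum_i c_i(b_i-a_i)$ by the shared endpoints telescopes it into $\sum_i (c_i-c_{i-1})\,v_i$ plus two boundary terms. Carrying this out with $c_i=\mu^i$ in the symmetric case, $c_i=\pi(\sigma(i))$ in the possibility case, and $c_i=1-\pi(\sigma(i))$ in the necessity case yields precisely the three asserted formulas after a routine index shift.

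The only delicate point, and the main obstacle, is the treatment of the unbounded blocks. In cases (iii)--(v) the survival-function expression contains one interval whose endpoint is $+\infty$ (through the conventions $\min\emptyset=+\infty$ and $\aA[G_{\sigma(0)}]{\x}=\infty$), so a naive $b-a$ would be infinite. The resolution is that exactly these blocks carry a vanishing coefficient: $\mu^0=\mu(\emptyset)=0$, $\pi(\sigma(0))=0$, and $1-\pi(\sigma(n))=0$, respectively. In agreement with Remark~\ref{gsf=0} and Remark~\ref{A1c}, the survival function is identically zero on the unbounded tail, so these terms do not contribute; moreover these same vanishing coefficients are exactly what forces the two boundary terms of the Abel transformation to cancel, leaving only the telescoped sum. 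Once this observation is in place, the remaining steps are purely computational.
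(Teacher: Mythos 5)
Your proposal is correct and takes essentially the same route as the paper, which gives no separate proof of Corollary~\ref{specimieryCh} but presents it as the direct consequence of integrating, term by term, the step-function formulas of Corollary~\ref{specimiery} via Definition~\ref{def: Chintegral} — exactly your plan. One small precision: in your Abel step only one of the two boundary terms vanishes because of the zero coefficient ($\mu^0=0$, $\pi(\sigma(0))=0$, $1-\pi(\sigma(n))=0$, which also neutralizes the unbounded block); the other vanishes because the lowest interval endpoint is $\aA[\emptyset]{\x}=0$ (resp.\ the convention $\min_{k\geq 0}\aA[\{\sigma(k)\}]{\x}=0$), so both cancellations are immediate but have different sources.
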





\begin{remark} In the following, we aim to emphasize that the $\cA$-Choquet integral covers the famuous Choquet integral.
Let us consider a special family $\cA^{\mathrm{max}}$. 
Taking symmetric measures we obtain $$\mathrm{C}_{\cA^\mathrm{max}}(\x,\mu)=\sum_{i=1}^n (\mu^i-\mu^{i-1})x_{\sigma(n-i+1)}=\sum_{i=1}^n(\mu^{n-i+1}-\mu^{n-i})x_{\sigma(i)},$$ where $\sigma\colon [n]\to[n]$ is a permutation such that $x_{\sigma(1)}\leq\dots\leq x_{\sigma(n)}$ with the convention $x_{\sigma(0)}=0$. This is a formula of Yager's ordered weighted averaging (OWA) operator~\cite{Yager1998} 
as Grabisch has shown,  see~\cite{GRABISCH20111}. Therefore, the generalized $\cA$-Choquet integral w.r.t. symmetric measures can be seen as a new type of the OWA operator.

The formula for possibility measure in Corollary~\ref{specimieryCh}\,(iv) simplifies to the form \begin{align*}\mathrm{C}_{\cA^\mathrm{max}}(\x,\Pi)
=\sum_{i=1}^n(\pi(\sigma(i))-\pi(\sigma(i-1)))\max_{k\geq i}x_{\sigma(k)}
=\sum_{i=1}^n(\pi(\sigma(i))-\pi(\sigma(i-1)))\max_{\pi(k)\geq\pi(\sigma(i))} x_{k},\end{align*} which is the famous Choquet integral with respect to possibility measure, cf.~\cite{DuboisRico2016}, and permutation $\sigma$ as in Corollary~\ref{specimiery}. Similarly, taking necessity measure in~Corollary~\ref{specimieryCh}\,(v) we obtain \begin{align*}\mathrm{C}_{\cA^\mathrm{max}}(\x,\mathrm{N})=\sum_{i=1}^n(\pi(\sigma(i))-\pi(\sigma(i-1)))\min_{k\geq i}x_{\sigma(k)}
=\sum_{i=1}^n(\pi(\sigma(i))-\pi(\sigma(i-1)))\min_{\pi(k)\geq\pi(\sigma(i))} x_{k}.\end{align*}\end{remark}

Due to formulas derived in this section, we are ready to solve the~problem of accomodation options introduced in Section~\ref{sec: motivation}, using the~concept of the generalized Choquet integral.

\paragraph{Solution of the Problem of accomodation options}
The aim of this example is to determine the most suitable accommodation for each character of the traveler (Anthony, Brittany, Charley). We use two methods: the standard and the generalized Choquet integral. Since we are deciding with respect to several criteria, the problem of choosing the best accommodation is a multi-criteria decision-making problem.

As we can see in Table~\ref{options_booking}, the criteria of the multicriteria decision-making process are different types: distance and price are minimization criteria, while the review is maximization criteria. Thus values in tables in the recent form are not suitable for working with, we have to reevaluate them using the standard methods, see~\cite{Singh2020}.

\noindent According to the previous let us adjust the column corresponding to distance:
\begin{itemize}
\item In each table let us choose the minimum value in the column for the distance. 
\item Let us divide the minimum from the previous step by each value from the distance column.
\end{itemize}
Let us repeat this process for the price as well. 
For the reviews column let us divide each value by the maximum value from the corresponding column. 
This gives us the following input vectors for decision-making process $\vectorA_1,\vectorA_2,\vectorB_1,\vectorB_2,\vectorC_1,\vectorC_2$:
\begin{table}[H]
    \centering
    \begin{subtable}{0.32\linewidth}
    \centering
    \begin{tabular}{M|M|M|M}
             & $\text{D}$ & $\text{P}$ & $\text{R}$ \\\hline
       $\vectorA_1$ & $1$ & $0.84$ & $0.875$\\
       $\vectorA_2$ & $0.4$ & $1$ & $1$
    \end{tabular}
    \caption{Anthony}
    \end{subtable}
    \begin{subtable}{0.32\linewidth}
    \centering
    \begin{tabular}{M|M|M|M}
             & $\text{D}$ & $\text{P}$ & $\text{R}$ \\\hline
       $\vectorB_1$ & $1$ & $0.8$ & $0.2$\\
       $\vectorB_2$ & $0.7$ & $1$ & $1$
    \end{tabular}
    \caption{Brittany}
    \end{subtable}
    \begin{subtable}{0.32\linewidth}
    \centering
    \begin{tabular}{M|M|M|M}
             & $\text{D}$ & $\text{P}$ & $\text{R}$ \\\hline
       $\vectorC_1$ & $1$ & $0.95$ & $0.8$\\
       $\vectorC_2$ & $0.2$ & $1$ & $1$
    \end{tabular}
    \caption{Charley}
    \end{subtable}
    \caption{Accommodation options for people -- adjusted data}
    \label{options_booking_adjust}
\end{table}
To evaluate each input vector (alternative) by Choquet integrals we have to know the values of monotone measures of each set $E\subseteq\{D,P,R\}$. 
We already know the values of the monotone measures $\mu$, $\nu$, $\xi$, that specify the characters of persons, for singletons, see Table~\ref{charaktery}. Further, because of the definition of the (normalized) monotone measure, 
\begin{center}$\mu(\emptyset)=\nu(\emptyset)=\xi(\emptyset)=0$,\,\, $\mu(\{\text{D}, \text{P}, \text{R}\})=\nu(\{\text{D}, \text{P}, \text{R}\})=\xi(\{\text{D}, \text{P}, \text{R}\})=1$.\end{center} It remains to determine the values of monotone measures of sets $\{\text{D}, \text{P}\}$, $\{\text{D}, \text{R}\}$, $\{\text{P}, \text{R}\}$.
The question is: How to set this values?
Let us rephrase this question: If we set the values of the monotone measure, how do we know that they are set well?
Various approaches to verify the correctness of the set values are known in the literature. 
Let us mention e.g.\ interaction index \cite{Grabisch1996}, Banzhaf power index or Shapley value.
We chose the Shapley value and the methodology described in~\cite{Grabisch1996}. 
The Shapley value expresses the significance of a given criterion by aggregating in a certain way the values of the monotone measure of the sets to which the given criterion belongs:
\begin{align}\label{shapley_formula}
    \varphi_\mu(i)=\sum_{A\subseteq [n]\backslash \{i\}} \gamma_{[n]}(A)\cdot\left(\mu(A\cup\{i\})-\mu(A)\right)\text{,}
\end{align}
where $i\in[n]$ is the index associated with the criterion and $\textstyle\gamma_{[n]}(A)=\frac{(n-|A|-1)!\cdot|A|!}{n!}$. 
Shapley value has the property that $\textstyle\sum_{i=1}^n\varphi_\mu(i)=1$.
So, we set the monotone measures $\mu, \nu, \xi$ to obtain $$\mu(\{i\})=\varphi_{\mu}(i),\quad \nu(\{i\})=\varphi_{\nu}(i),\quad \xi(\{i\})=\varphi_{\xi}(i),$$
i.e.\ we form the equations given by~\eqref{shapley_formula} and we calculate the remaining values of the monotone measures (in the table are their approximated values to two decimal places):
\begin{table}[H]
    \centering
    \begin{tabular}{c|o|o|o}
        $E$       & $\mu(E)$ & $\nu(E)$ & $\xi(E)$ \\ \hline
        $\{\text{D},\text{P}\}$ & $0.94$ & $0.85$ & $0.63$  \\
        $\{\text{D},\text{R}\}$ & $0.48$ & $0.76$ & $0.71$  \\
        $\{\text{P},\text{R}\}$ & $0.81$ & $0.59$ & $0.84$
    \end{tabular}
    \caption{Monotone measures calculating by Shapley value}
    \label{monotone_measure_Shapley}
\end{table}
Finally, let us calculate the standard and generalized Choquet integral of vectors $\vectorA_1,\vectorA_2,$ $\vectorB_1,\vectorB_2,\vectorC_1,\vectorC_2$ with respect to monotone measures $\mu$, $\nu$, $\xi$. As
a conditional aggregation operator let us use
the standard Choquet integral
\begin{center}
$\sA^{\mathrm{Ch}}(\x|E)=\mathrm{C}(\x\mathbf{1}_E,m)$,
\end{center}
with $\x\in[0,\infty)^{[3]}$, $E\in\collectionGSF=2^{[3]}$ and monotone measure $m$ (we shall use $\mu$, $\nu$, $\xi$ according to character). Further, $\collectionGSF=2^{[3]}$.
We expect that 
\begin{itemize}
    \item $\vectorA_2$ should be preferred to $\vectorA_1$ with respect to character of Anthony,
    \item  $\vectorB_1$ should be preferred to $\vectorB_2$, because of the character of Brittany, 
    \item $\vectorC_2$ preferred to $\vectorC_1$ for Charley.
    \end{itemize}

The results for both integrals are shown in Table~\ref{results}. Let us look at the highlighted diagonal of Table~\ref{results}. 
In the highlighted cells, the first line, we see the calculation of the standard Choquet integrals. As we can see, we did not receive the expected accommodation option preferences.
In the second line of highlighted cells, the generalized Choquet integral is calculated. 
We see that this method matches our expected preferences.
This indicates its benefits in decision-making processes. 
The generalized Choquet integral in the calculation process takes into account the input vector with respect to various conditional sets. If the collection contains more sets than the cardinality of the basic set, this integral aggregates more input values than when using the standard Choquet integral. Thanks to this, the calculation is more detailed, "smoother" and more accurate.

To be complete, we have calculated the overall scores of various combinations of accommod\-ation--person, see the other cells of Table~\ref{results}. 

\begin{table}
    \centering
    \begin{tabular}{c|c|S|S|S}
         accom.\ opt. & type of integral & Anthony ($\mu$) & Brittany ($\nu$) & Charley ($\xi$) \\ \hline
         \multirow{4}{*}{$\vectorA_1$, $\vectorA_2$} & \multirow{2}{*}{\footnotesize{Choquet integral}} & \cellcolor{gray!10} $\vectorA_1\succ \vectorA_2$ & $\vectorA_1\succ \vectorA_2$ & $\vectorA_2\succ \vectorA_1$ \\
         & & \cellcolor{gray!10} \footnotesize{$0.8943>0.8860$} & \footnotesize{$0.9604>0.7540$} & \footnotesize{$0.9040>0.8899$} \\[5pt]
         & \multirow{2}{*}{\footnotesize{\shortstack{generalized Choquet \\ integral}}} & \cellcolor{gray!10} $\vectorA_2\succ \vectorA_1$ & $\vectorA_1\succ \vectorA_2$ & $\vectorA_2\succ \vectorA_1$ \\
         & & \cellcolor{gray!10} \footnotesize{$0.6857>0.6439$} & \footnotesize{$0.6845>0.4554$} & \footnotesize{$0.6566>0.6161$}\\\hline
         \multirow{4}{*}{$\vectorB_1$, $\vectorB_2$} & \multirow{2}{*}{\footnotesize{Choquet integral}} & $\vectorB_2\succ \vectorB_1$ & \cellcolor{gray!10} $\vectorB_2\succ \vectorB_1$ & $\vectorB_2\succ \vectorB_1$ \\
         & & \footnotesize{$0.9430>0.8240$} & \cellcolor{gray!10} \footnotesize{$0.8770>0.8600$} & \footnotesize{$0.9520>0.6180$} \\[5pt]
         & \multirow{2}{*}{\footnotesize{\shortstack{generalized Choquet \\ integral}}} & $\vectorB_2\succ \vectorB_1$ & \cellcolor{gray!10} $\vectorB_1\succ \vectorB_2$ & $\vectorB_2\succ \vectorB_1$ \\
         & & \footnotesize{$0.7127>0.6087$} & \cellcolor{gray!10} \footnotesize{$0.6393>0.5861$} & \footnotesize{$0.6766>0.3551$}\\\hline
         \multirow{4}{*}{$\vectorC_1$, $\vectorC_2$} & \multirow{2}{*}{\footnotesize{Choquet integral}} & $\vectorC_1\succ \vectorC_2$ & $\vectorC_1\succ \vectorC_2$ & \cellcolor{gray!10} $\vectorC_1\succ \vectorC_2$ \\
         & & \footnotesize{$0.9560>0.8480$} & \footnotesize{$0.9650>0.6720$} & \cellcolor{gray!10} \footnotesize{$0.9045>0.8720$} \\[5pt]
         & \multirow{2}{*}{\footnotesize{\shortstack{generalized Choquet \\ integral}}} & $\vectorC_1\succ \vectorC_2$ & $\vectorC_1\succ \vectorC_2$ & \cellcolor{gray!10} $\vectorC_2\succ \vectorC_1$ \\
         & & \footnotesize{$0.7069>0.6654$} & \footnotesize{$0.6895>0.3532$} & \cellcolor{gray!10} \footnotesize{$0.6433>0.6226$}\\
    \end{tabular}
    \caption{Results of preferences of accommodation options. The preference relation is represented by Choquet integrals as follows:
$\x\succ\y$ ($\x$ is preferred to $\y$) if only if $\mathrm{C}(\x,m)>\mathrm{C}(\y,m)$, resp.\ $\mathrm{C}_{\cA}(\x,m)>\mathrm{C}_{\cA}(\y,m)$, where $m$ is $\mu$, $\nu$ or $\xi$, respectively.}
    \label{results}
\end{table}

\section{Searching optimal intervals, and 
indistinguishibility}\label{optimal_sposob}

The Choquet integral is a basic tool for multicriteria decision making and modeling of decision under risk and uncertainty. 
In~\cite{DuboisRico2016}, Dubois and Rico studied the equality conditions of Choquet integrals of particular input vectors. They considered Choquet integrals with respect to possibility and necessity measures. In~\cite{ChenMesiarLiStupnanova2017}, Chen et al. continued their research with a view to a wider class of so-called universal integrals~\cite{KlementMesiarPap2010}. Universal integrals form one class of utility functions in multicriteria decision making. 

In this section, we formulate the equality conditions of generalized survival functions considering arbitrary measures. Naturally, if generalized survival functions coincide, then their Choquet integrals equal. In order to obtain the equality conditions, we study the greatest possible intervals on which the generalized survival function takes its possible values.
For this purpose, for any $j\in\ozn{\kappa-1}$ let us set 
\begin{align}
\label{phii}\fir(j):=\max\{k\in\ozn{\kappa-1}:\mu_k=\mu_j\}\quad\text{and}\quad\fil(j)=\min\{k\in\ozn{\kappa-1}:\mu_k=\mu_j\}.
\end{align}
The following proposition summarizes the basic properties of $\fir$ and $\fil$.

\begin{proposition}\label{vlastnosti_fi} Let $\cA$ be a FCA, $\mu\in\mathbf{M}$, $\x\in[0,\infty)^{[n]}$, $\fir$ and $\fil$ are defined as in~\eqref{phii}  and $a,b,j\in\ozn{\kappa-1}$. Then the following properties hold. 
\begin{enumerate}[\rm (i)]
 \item $\fil(j)\leq j\leq\fir(j)$,
    \item $\mu_{\fir(j)}=\mu_k=\mu_{\fil(j)}$ for any integer $k\in\{\fil(j),\dots,\fir(j)\}$,
    \item $\bigcup_{\fil(j)\leq l\leq \fir(j)}\Big[\min_{k\leq l}\sA_{\inv{k}},\min_{k<l}\sA_{\inv{k}}\Big)=\Big[\min_{k\leq\fir(j)}\sA_{\inv{k}},\min_{k<\fil(j)}\sA_{\inv{k}}\Big)$,
    \item $\fil$ and $\fir$ are nondecreasing.
\end{enumerate}
\end{proposition}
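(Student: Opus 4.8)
The plan is to treat the four items in order, relying throughout on the monotonicity of the arrangement~\eqref{Fi}, namely $\mu_0\leq\mu_1\leq\dots\leq\mu_{\kappa-1}$, which makes each level set $\{k\in\ozn{\kappa-1}:\mu_k=\mu_j\}$ a block of consecutive integers. For (i) I would simply note that $j$ itself belongs to this set, so it is nonempty; its minimum $\fil(j)$ is then at most $j$ and its maximum $\fir(j)$ is at least $j$. For (ii), since $\mu_{\fil(j)}=\mu_j=\mu_{\fir(j)}$ by definition, any integer $k$ with $\fil(j)\leq k\leq\fir(j)$ satisfies $\mu_{\fil(j)}\leq\mu_k\leq\mu_{\fir(j)}$ by~\eqref{Fi}, and equality of the two outer terms forces $\mu_k=\mu_j$. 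This also confirms that the level set is an interval of integers, a fact reused below.

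The heart of the argument is (iii). First I would record the elementary identity $\min_{k\leq l}\sA_{\inv k}=\min\{\min_{k<l}\sA_{\inv k},\sA_{\inv l}\}$, equivalently $\min_{k<l}\sA_{\inv k}=\min_{k\leq l-1}\sA_{\inv k}$ for $l\geq 1$. Writing $m_l:=\min_{k\leq l}\sA_{\inv k}$ (which equals $\sA_{\bj(l)}$ by Lemma~\ref{vl_j}), the $l$-th interval in the union becomes $[m_l,m_{l-1})$, so its right endpoint coincides with the left endpoint of the interval for $l-1$; consecutive intervals therefore abut. Since $m_l$ is nonincreasing in $l$, telescoping the union over $l=\fil(j),\dots,\fir(j)$ collapses it to $[m_{\fir(j)},m_{\fil(j)-1})$, which is precisely $[\min_{k\leq\fir(j)}\sA_{\inv k},\min_{k<\fil(j)}\sA_{\inv k})$, using the convention $\sA_{\bj(-1)}=+\infty$ when $\fil(j)=0$.

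Finally, for (iv), given $a\leq b$ I would distinguish two cases. If $\mu_a=\mu_b$, then the level sets coincide and $\fil(a)=\fil(b)$, $\fir(a)=\fir(b)$. If $\mu_a<\mu_b$, then monotonicity~\eqref{Fi} forces every index $k$ with $\mu_k=\mu_a$ to satisfy $k<b$, whence $\fir(a)<b\leq\fir(b)$ by (i); likewise every index $k$ with $\mu_k=\mu_b$ satisfies $k>\fir(a)\geq a\geq\fil(a)$, whence $\fil(a)\leq\fil(b)$. Thus both maps are nondecreasing.

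The main obstacle is the interval bookkeeping in (iii): one has to recognize the abutting structure of the half-open intervals and verify that possibly empty intervals (occurring when $m_l=m_{l-1}$) do not disturb the telescoping. The remaining parts are direct consequences of the ordering of the values $\mu_k$.
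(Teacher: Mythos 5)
Your proof is correct and follows essentially the same route as the paper: items (i), (ii) and (iv) via the monotonicity of the arrangement~\eqref{Fi} (level sets of $\mu$ being blocks of consecutive indices), and item (iii) via the identity $\min_{k<l}\sA_{\inv{k}}=\min_{k\leq l-1}\sA_{\inv{k}}$ together with the chain $\min_{k\leq\fir(j)}\sA_{\inv{k}}\leq\dots\leq\min_{k<\fil(j)}\sA_{\inv{k}}$, which is exactly your telescoping of abutting half-open intervals. Your write-up is merely more explicit than the paper's (e.g.\ in spelling out the case distinction for (iv) and the treatment of empty intervals), but the underlying argument is the same.
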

\begin{proof} The statements (i) and (ii) follow directly from definitions of $\fir$, $\fil$, and arrangement~\eqref{Fi}.
The validity of the statement (iii) follows from the fact that 
$\min_{k<l}\sA_\inv{k}=\min_{k\leq l-1}\sA_\inv{k}$ for $l\in\{\fil(j),\dots,\fir(j)\},$ $l\neq 0$, and because of 
$$\min_{k\leq\fir(j)}\sA_\inv{k}\leq\min_{k<\fir(j)}\sA_\inv{k}=\min_{k\leq\fir(j)-1}\sA_\inv{k}\leq\dots\leq\min_{k<\fil(j)+1}\sA_\inv{k}=\min_{k\leq\fil(j)}\sA_\inv{k}\leq\min_{k<\fil(j)}\sA_\inv{k}$$
with the convention already stated in this paper $\min\limits_{k< 0}\sA_\inv{k}=\min \emptyset=+\infty.$
Now we show (iv). If $a\leq b$, then from arrangement~\eqref{Fi} we have $\mu_a\leq\mu_b$ and thus $$\max\{k\in\ozn{\kappa-1}:\mu_a=\mu_k\}\leq\max\{k\in\ozn{\kappa-1}:\mu_b=\mu_k\},$$ hence $\fir(a)\leq\fir(b)$.
The second part of the statement (iv) can be shown analogously. 
\qed
\end{proof}
\medskip


By means of $\fir$ and $\fil$ we can determine the greatest possible intervals corresponding to given monotone measure. Simultaneously, we show under what conditions the value of a given monotone measure is not achieved by the generalized survival function. 

\begin{proposition}\label{najgsf}
Let $\cA$ be a FCA, $\mu\in\mathbf{M}$, $\x\in[0,+\infty)^{[n]}$, $\fil$ and $\fir$ be given as in~(\ref{phii}), and $j\in\ozn{\kappa-1}$. Then 
 $$\gsf{}{\x}{\alpha}=\mu_j \text{ for any } \alpha\in\Big[\min_{k\leq\fir(j)}\sA_\inv{k},\min_{k<\fil(j)}\sA_\inv{k}\Big)$$
 with the convention $\min\limits_{k< 0}\sA_\inv{k}=\min \emptyset=+\infty$ and this interval is the greatest possible. 
\end{proposition}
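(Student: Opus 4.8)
The plan is to reduce the statement to a union of the one-index intervals already produced in Theorem~\ref{gsf2}(i). The crucial preliminary observation is that, by the ordering in~\eqref{Fi} together with Proposition~\ref{vlastnosti_fi}(i)--(ii), the set of indices realizing the value $\mu_j$ is exactly the contiguous block
$$\{k\in\ozn{\kappa-1}:\mu_k=\mu_j\}=\{\fil(j),\fil(j)+1,\dots,\fir(j)\}.$$
Indeed, the values $\mu_0\leq\dots\leq\mu_{\kappa-1}$ are nondecreasing, so equal values occupy consecutive indices, and the extreme indices of that block are by definition $\fil(j)$ and $\fir(j)$.

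First I would fix $j$ and apply Theorem~\ref{gsf2}(i) to every index $l$ in this block. For each such $l$ we obtain $\gsf{}{\x}{\alpha}=\mu_l$ on the interval $[\min_{k\leq l}\sA_{\inv{k}},\min_{k<l}\sA_{\inv{k}})$, and since $\mu_l=\mu_j$ by Proposition~\ref{vlastnosti_fi}(ii), the value on each of these intervals is the constant $\mu_j$. Taking the union over $l\in\{\fil(j),\dots,\fir(j)\}$ and invoking Proposition~\ref{vlastnosti_fi}(iii), this union collapses to the single interval $[\min_{k\leq\fir(j)}\sA_{\inv{k}},\min_{k<\fil(j)}\sA_{\inv{k}})$, with the stated convention $\min_{k<0}\sA_{\inv{k}}=+\infty$ covering the case $\fil(j)=0$. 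On this whole interval the generalized survival function is therefore constantly equal to $\mu_j$, which settles the first assertion.

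For the maximality (``greatest possible'') part, I would use that formula~\eqref{vyjgsf2} of Theorem~\ref{gsf2}(ii) exhibits $\gsf{}{\x}{\cdot}$ as a step function whose successive intervals $I_l:=[\min_{k\leq l}\sA_{\inv{k}},\min_{k<l}\sA_{\inv{k}})$, $l\in\ozn{\kappa-1}$, tile $[0,+\infty)$ and carry the value $\mu_l$. Hence the full preimage $\{\alpha:\gsf{}{\x}{\alpha}=\mu_j\}$ is precisely the union of those $I_l$ with $\mu_l=\mu_j$, that is, the interval just produced, and nothing larger can occur. Concretely, for $\alpha$ just below the left endpoint the governing interval is $I_{\fir(j)+1}$, whose value $\mu_{\fir(j)+1}$ exceeds $\mu_j$, while at or beyond the right endpoint the governing interval is $I_{\fil(j)-1}$, whose value $\mu_{\fil(j)-1}$ is strictly below $\mu_j$; both strict inequalities rest on the extremality of $\fir(j)$ and $\fil(j)$ among the indices attaining $\mu_j$.

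The argument is short because Proposition~\ref{vlastnosti_fi} already carries the combinatorial weight. The only points needing care are the boundary convention when $\fil(j)=0$ (so that the right endpoint is $+\infty$ and there is nothing to its right) and, symmetrically, the case $\fir(j)=\kappa-1$ (where the left endpoint is $\sA_0=0$ and maximality to the left is automatic), together with the strictness of the two endpoint inequalities, for which one must invoke the extremality of $\fil(j),\fir(j)$ rather than mere monotonicity of $\mu$. Beyond this bookkeeping I expect no genuine obstacle.
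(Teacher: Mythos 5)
Your proposal is correct and is essentially the paper's own proof: the first half coincides with it exactly (Theorem~\ref{gsf2} together with Proposition~\ref{vlastnosti_fi}(i)--(iii) collapses the block of tiles carrying the value $\mu_j$ into the stated interval), and your maximality argument rests on the same two facts the paper uses in its proof by contradiction — that the tiles $\big[\min_{k\leq l}\sA_\inv{k},\min_{k< l}\sA_\inv{k}\big)$, $l\in\ozn{\kappa-1}$, partition $[0,+\infty)$ with value $\mu_l$ on each, and that extremality of $\fil(j)$, $\fir(j)$ forces $\mu_l\neq\mu_j$ for every $l$ outside $\{\fil(j),\dots,\fir(j)\}$ — merely packaged as a direct computation of the preimage of $\mu_j$ rather than as a contradiction. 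The one imprecision is your parenthetical claim that the governing tile just outside the interval is $I_{\fir(j)+1}$ (resp.\ $I_{\fil(j)-1}$): that particular tile may be empty, but since every nonempty tile with index $l>\fir(j)$ (resp.\ $l<\fil(j)$) carries a value strictly greater (resp.\ smaller) than $\mu_j$, your main preimage argument is unaffected.
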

\begin{proof}
According to Theorem~\ref{gsf2}, Proposition~\ref{vlastnosti_fi}(i), (ii), (iii)  the generalized survival function $\gsf{}{\x}{\alpha}$ achieves the value $\mu_j$ on interval $\Big[\min_{k\leq\fir(j)}\sA_\inv{k},\min_{k<\fil(j)}\sA_\inv{k}\Big)$. However, it is not clear that this interval is the greatest possible. By contradiction:
So let exist $\tilde{j}\in\ozn{\kappa-1}$ such that $$\min_{k\leq\fir(\tilde{j})}\sA_\inv{k}<\min_{k\leq\fir(j)}\sA_\inv{k}\quad\text{or}\quad\min_{k<\fil(j)}\sA_\inv{k}<\min_{k<\fil(\tilde{j})}\sA_\inv{k},$$
and $\gsf{}{\x}{\alpha}=\mu_j$ for $\alpha\in\Big[\min_{k\leq\fir(\tilde{j})}\sA_\inv{k},\min_{k\leq\fir(j)}\sA_\inv{k}\Big)$ or $\alpha\in\Big[\min_{k<\fil(j)}\sA_\inv{k},\min_{k<\fil(\tilde{j})}\sA_\inv{k}\Big)$.
Let us discuss these cases.
\begin{itemize}
    \item By Theorem~\ref{gsf2}(i) for $\alpha=\min_{k\leq\fir(\tilde{j})}\sA_\inv{k}$ it holds $\gsf{}{\x}{\alpha}=\mu_{\fir(\tilde{j})}=\mu_{\tilde{j}}$, where the last equality holds because of Proposition~\ref{vlastnosti_fi}(i), (ii).
    However, $\mu_{\tilde{j}}\neq\mu_j$, otherwise we get a~contradiction.
    Indeed, if $\mu_{\tilde{j}}=\mu_j$, then $\fir(j)=\fir(\tilde{j})$ and thus $\min_{k\leq\fir(\tilde{j})}\sA_\inv{k}=\min_{k\leq\fir(j)}\sA_\inv{k}$, which is in conflict with choice of $\tilde{j}$.
    \item Using the same arguments as in the previous case, for $\alpha=\min_{k<\fil(j)}\sA_\inv{k}=\min_{k\leq\fil(j)-1}\sA_\inv{k}$, we have $\gsf{}{\x}{\alpha}=\mu_{\fil(j)-1}$.
    Further, $\mu_{\fil(j)-1}<\mu_{\fil(j)}=\mu_j$, thus $\mu_{\fil(j)-1}\neq\mu_j$.
\end{itemize}
This completes the proof.
\qed
\end{proof}

\bigskip
From the above proposition, we immediately obtain the following result.

\begin{corollary}\label{nenadobuda}
Let $\cA$ be a FCA, $\mu\in\mathbf{M}$, $\x\in[0,+\infty)^{[n]}$, $\fil$ and $\fir$ be given as in~(\ref{phii}), and $j\in\ozn{\kappa-1}$.
The value $\mu_j$ is not achieved if and only if $\min_{k\leq\fir(j)}\sA_\inv{k}=\min_{k<\fil(j)}\sA_\inv{k}$.
\end{corollary}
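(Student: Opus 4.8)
The plan is to read the result off directly from Proposition~\ref{najgsf}, which already does the substantive work. That proposition asserts that
$$\Big[\min_{k\leq\fir(j)}\sA_\inv{k},\min_{k<\fil(j)}\sA_\inv{k}\Big)$$
is the \emph{greatest possible} interval on which $\gsf{}{\x}{\alpha}$ takes the value $\mu_j$. Consequently, $\mu_j$ is attained by the generalized survival function for some $\alpha$ if and only if this interval is nonempty, so the whole corollary reduces to deciding when that interval degenerates.

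The only point requiring verification is that the interval is genuinely of the form $[a,b)$ with $a\le b$, so that emptiness is equivalent to $a=b$. For this I would invoke Proposition~\ref{vlastnosti_fi}(i), which gives $\fil(j)\le j\le\fir(j)$; hence every $k\in\ozn{\kappa-1}$ with $k<\fil(j)$ also satisfies $k\le\fir(j)$, i.e.\ $\{k:k<\fil(j)\}\subseteq\{k:k\le\fir(j)\}$. Taking the minimum of $\sA_\inv{k}$ over the larger index set can only decrease the value, so
$$\min_{k\leq\fir(j)}\sA_\inv{k}\le\min_{k<\fil(j)}\sA_\inv{k},$$
the boundary case $\fil(j)=0$ being covered by the standing convention $\min\limits_{k<0}\sA_\inv{k}=\min\emptyset=+\infty$. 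A half-open interval $[a,b)$ with $a\le b$ is empty precisely when $a=b$.

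Combining the two observations finishes the argument. If $\min_{k\leq\fir(j)}\sA_\inv{k}=\min_{k<\fil(j)}\sA_\inv{k}$, then the greatest interval carrying the value $\mu_j$ is empty, so no $\alpha$ satisfies $\gsf{}{\x}{\alpha}=\mu_j$ and $\mu_j$ is not achieved; conversely, if the two minima differ, the interval is nonempty and $\mu_j$ is achieved on it by Proposition~\ref{najgsf}. I do not anticipate any genuine obstacle, since the content of the statement is entirely carried by Proposition~\ref{najgsf}, and the remaining work is only the elementary endpoint comparison described above.
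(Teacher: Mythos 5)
Your proposal is correct and follows exactly the paper's route: the paper gives no written argument beyond stating that the corollary follows immediately from Proposition~\ref{najgsf}, which is precisely your reduction. Your additional verification via Proposition~\ref{vlastnosti_fi}(i) that the interval's left endpoint does not exceed the right one (so that emptiness is equivalent to equality of endpoints) is the small detail the paper leaves implicit, and you fill it in correctly.
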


\begin{lema}\label{nenadobuda3} 
    Let $\cA$ be a FCA, $\mu\in\mathbf{M}$, $\x\in[0,+\infty)^{[n]}$, $\fil$ and $\fir$ be given as in~(\ref{phii}). 
      \begin{enumerate}[\rm(i)]
    \item Let $j\in[\kappa-1]$. Then it holds
    \begin{enumerate}
        \item[\rm(i1)] If $\mu_j>\mu_{j-1}$, then $\fil(j)-1=\fir(j-1)$ and $\mu_j$ is achieved on $\big[\min\limits_{k\leq \fir(j)}\sA_\inv{k}, \min\limits_{k\leq \fir(j-1)}\sA_\inv{k}\big)$. Moreover, this interval is the greatest possible.
     \item[\rm(i2)] If $\mu_j=\mu_{j-1}$, then $\fir(j)=\fir(j-1)$.  
  \end{enumerate}
         \item Let $j\in\ozn{\kappa-1}$. Then it holds
\begin{enumerate}
        \item[\rm(ii1)] If $\mu_j<\mu_{j+1}$, then $\fir(j)+1=\fil(j+1)$ and  
        $\mu_j$ is achieved on $\Big[\min_{k<\fil(j+1)}\sA_\inv{k},\min_{k<\fil(j)}\sA_\inv{k}\Big)$, with $\min\limits_{k< \fil(\kappa)}\sA_\inv{k}=0$ by convention. Moreover, this interval is the greatest possible.
\item[\rm(ii2)]  If $\mu_j=\mu_{j+1}$, then $\fil(j)=\fil(j+1)$.
 \end{enumerate}
     \end{enumerate}
\end{lema}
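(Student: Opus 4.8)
The plan is to reduce everything to two ingredients: the fact that each $\mu$-value occupies a contiguous block of indices, and Proposition~\ref{najgsf}, which already identifies the maximal interval on which $\gsf{}{\x}{\alpha}=\mu_j$. First I would record the structural consequence of the nondecreasing arrangement~\eqref{Fi}: for a fixed $j$ the level set $\{k\in\ozn{\kappa-1}:\mu_k=\mu_j\}$ is precisely the block of consecutive integers $\{\fil(j),\dots,\fir(j)\}$, which is essentially the content of Proposition~\ref{vlastnosti_fi}(i),(ii). Since $\fir$ and $\fil$ by definition~\eqref{phii} depend only on the value $\mu_j$, the hypotheses $\mu_j=\mu_{j-1}$ and $\mu_j=\mu_{j+1}$ give $\fir(j)=\fir(j-1)$ and $\fil(j)=\fil(j+1)$ at once; this settles parts~(i2) and~(ii2) with no further argument.

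For part~(i1) the hypothesis $\mu_j>\mu_{j-1}$ means that $j$ opens a new value-block while $j-1$ closes the preceding one. Together with $\fil(j)\le j\le\fir(j)$ from Proposition~\ref{vlastnosti_fi}(i), strictness forces $\fil(j)=j$ and $\fir(j-1)=j-1$, so that $\fil(j)-1=\fir(j-1)$. The interval claim then follows by rewriting the maximal interval supplied by Proposition~\ref{najgsf}: its left endpoint $\min_{k\le\fir(j)}\sA_{\inv{k}}$ is unchanged, while its right endpoint becomes $\min_{k<\fil(j)}\sA_{\inv{k}}=\min_{k\le j-1}\sA_{\inv{k}}=\min_{k\le\fir(j-1)}\sA_{\inv{k}}$. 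As the resulting interval is literally the one in Proposition~\ref{najgsf}, the maximality assertion transfers directly.

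Part~(ii1) is the mirror image. Now $\mu_j<\mu_{j+1}$ forces $j$ to close its block and $j+1$ to open the next, whence $\fir(j)=j$ and $\fil(j+1)=j+1$, giving $\fir(j)+1=\fil(j+1)$. Invoking Proposition~\ref{najgsf} again and this time rewriting the \emph{left} endpoint, $\min_{k\le\fir(j)}\sA_{\inv{k}}=\min_{k\le j}\sA_{\inv{k}}=\min_{k<\fil(j+1)}\sA_{\inv{k}}$, produces the stated interval, with the boundary convention $\min_{k<\fil(\kappa)}\sA_{\inv{k}}=0$ invoked only at the top value; maximality follows for the same reason as before.

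I do not anticipate a genuine obstacle here: the whole argument is index bookkeeping resting on Proposition~\ref{najgsf}, and the only points demanding care are verifying that the rewritten endpoints coincide and applying the empty-$\min$ conventions correctly at the boundary indices $j=0$ and $j=\kappa-1$.
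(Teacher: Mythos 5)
Your proposal is correct and follows essentially the same route as the paper: derive $\fil(j)=j$, $\fir(j-1)=j-1$ (resp.\ $\fir(j)=j$, $\fil(j+1)=j+1$) directly from the definitions in~\eqref{phii} together with the monotone arrangement~\eqref{Fi}, then invoke Proposition~\ref{najgsf} and rewrite one endpoint of its maximal interval, with (i2) and (ii2) immediate from the fact that $\fil,\fir$ depend only on the value $\mu_j$. The paper's proof is terser (it dismisses (i2) as trivial and part (ii) as analogous), so your explicit treatment of (ii1) and of the boundary convention $\min_{k<\fil(\kappa)}\sA_{\inv{k}}=0$ only fills in details the authors omitted.
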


\begin{proof} Let us prove part (i). 
\begin{enumerate}
    \item[(i1)] If $\mu_j>\mu_{j-1}$, then directly from definitions of $\fil$ and $\fir$ we have $\fil(j)=j$ and $\fir(j-1)=j-1$, thus $\fir(j-1)=\fil(j)-1$.
    Further, according to Proposition~\ref{najgsf}, we have
$$\gsf{}{\x}{\alpha}=\mu_j$$
for each $\alpha\in\big[\min\limits_{k\leq \fir(j)}\sA_\inv{k}, \min\limits_{k\leq \fil(j)-1}\sA_\inv{k}\big)$ and this interval is the greatest possible. 
Using the above equality
we have $$\big[\min\limits_{k\leq \fir(j)}\sA_\inv{k}, \min\limits_{k\leq \fil(j)-1}\sA_\inv{k}\big)=\big[\min\limits_{k\leq \fir(j)}\sA_\inv{k}, \min\limits_{k\leq \fir(j-1)}\sA_\inv{k}\big).$$ 
\item[(i2)] It holds trivially.
\end{enumerate}
  Part (ii) can be proved analogously.
\qed
\end{proof}

Let us notice that Proposition~\ref{najgsf} and its corollaries are crucial to state the sufficient conditions for indistinguishability of generalized Choquet integral equivalent pairs, see Section~\ref{Choquet}. Using the above results, one can obtain the improvement of formula~(\ref{vyjgsf2}).

\begin{proposition}\label{skrateniegsf2}Let $\cA$ be a FCA, $\mu\in\mathbf{M}$, $\x\in[0,+\infty)^{[n]}$, and $\varphi$ be given as in~(\ref{phii}). Then 
\begin{align}\label{skrateniegsf2f}
\gsf{}{\x}{\alpha}&=\sum_{j=1}^{\kappa-1}\mu_j\bin_{\big[\min\limits_{k\leq \fir(j)}\sA_\inv{k},\min\limits_{k\leq \fir(j-1)}\sA_\inv{k}\big)}(\alpha)=\sum_{j=1}^{\kappa-1}\mu_j\bin_{\big[\min\limits_{k< \fil(j+1)}\sA_\inv{k},\min\limits_{k< \fil(j)}\sA_\inv{k}\big)}(\alpha)\
\end{align} 
for any $\alpha\in[0,\infty)$, with the conventions $\fil(\kappa):=\kappa$ (thus $\min\limits_{k< \fil(\kappa)}\sA_\inv{k}=0$), $\min\limits_{k< 0}\sA_\inv{k}=\min \emptyset=+\infty$.
\end{proposition}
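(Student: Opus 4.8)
The plan is to prove each of the two equalities by reducing the displayed sum term-by-term to the maximal-interval description of $\gsf{}{\x}{\alpha}$ supplied by Proposition~\ref{najgsf}, the two reductions being mirror images of one another via $\fir$ and $\fil$. I would carry out the first equality in full and then indicate that the second is entirely analogous. The governing idea is that the summation index $j$ ranges over \emph{all} of $[\kappa-1]$, yet only those $j$ sitting at a genuine jump of the measure values contribute; the remaining summands have empty indicator intervals.

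For the first equality I would fix $j\in[\kappa-1]$ and use that, by arrangement~\eqref{Fi}, exactly one of $\mu_j=\mu_{j-1}$ or $\mu_j>\mu_{j-1}$ holds. If $\mu_j=\mu_{j-1}$, then Lemma~\ref{nenadobuda3}(i2) gives $\fir(j)=\fir(j-1)$, so the interval $\big[\min_{k\leq\fir(j)}\sA_{\inv{k}},\min_{k\leq\fir(j-1)}\sA_{\inv{k}}\big)$ is empty and the $j$-th summand drops out. If $\mu_j>\mu_{j-1}$, then Lemma~\ref{nenadobuda3}(i1) gives $\fir(j-1)=\fil(j)-1$, hence $\min_{k\leq\fir(j-1)}\sA_{\inv{k}}=\min_{k<\fil(j)}\sA_{\inv{k}}$, and the $j$-th interval becomes exactly $\big[\min_{k\leq\fir(j)}\sA_{\inv{k}},\min_{k<\fil(j)}\sA_{\inv{k}}\big)$, which by Proposition~\ref{najgsf} is the greatest interval on which $\gsf{}{\x}{\alpha}=\mu_j$. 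Thus the surviving summands are indexed precisely by the jump indices $j$ with $\mu_j>\mu_{j-1}$; each such $j$ equals $\fil(j)$ and carries a value $\mu_j>\mu_{j-1}\geq\mu_0=0$, so it is the minimal index of its constant-$\mu$ block.

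It then remains to verify that these maximal intervals tile the region on which $\gsf{}{\x}{\alpha}$ is positive. Here I would invoke Theorem~\ref{gsf2}: the intervals $\big[\min_{k\leq l}\sA_{\inv{k}},\min_{k<l}\sA_{\inv{k}}\big)$, $l\in\ozn{\kappa-1}$, partition $[0,+\infty)$, and grouping them over each constant-$\mu$ block $\{\fil(j),\dots,\fir(j)\}$ collapses, by Proposition~\ref{vlastnosti_fi}(iii), to the single interval $\big[\min_{k\leq\fir(j)}\sA_{\inv{k}},\min_{k<\fil(j)}\sA_{\inv{k}}\big)$ bearing the block value. Discarding the block of value $\mu_0=0$ (which is exactly why the sum may start at $j=1$), the remaining block intervals stand in bijection with the jump indices, so the reduced sum reproduces $\gsf{}{\x}{\alpha}$ and the first equality follows. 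The second equality is obtained by the symmetric argument: for fixed $j\in\ozn{\kappa-1}$ one compares $\mu_j$ with $\mu_{j+1}$, uses Lemma~\ref{nenadobuda3}(ii2) to discard the summand when $\mu_j=\mu_{j+1}$, and Lemma~\ref{nenadobuda3}(ii1) (with $\fir(j)+1=\fil(j+1)$, giving $\min_{k<\fil(j+1)}\sA_{\inv{k}}=\min_{k\leq\fir(j)}\sA_{\inv{k}}$) to identify the surviving summand with the same maximal interval of Proposition~\ref{najgsf}; the convention $\fil(\kappa)=\kappa$, i.e.\ $\min_{k<\fil(\kappa)}\sA_{\inv{k}}=0$, is precisely what makes the top block reach down to $0$, matching $\min_{k\leq\fir(\kappa-1)}\sA_{\inv{k}}=0$.

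I expect the only delicate point to be the coverage bookkeeping in the previous paragraph, namely checking that the surviving maximal intervals are pairwise disjoint and jointly exhaust $\big[0,\min_{k<\fil(j_{\min})}\sA_{\inv{k}}\big)$ with no gaps, the value-$0$ part being absorbed by the omitted $j=0$ term. Everything else is a direct case distinction powered by Lemma~\ref{nenadobuda3} and Proposition~\ref{najgsf}; the main care required is in tracking the boundary conventions $\min_{k<0}\sA_{\inv{k}}=+\infty$ and $\fil(\kappa)=\kappa$ at the two ends of the index range.
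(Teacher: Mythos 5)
Your proposal is correct and follows essentially the same route as the paper's proof: the same case distinction $\mu_j=\mu_{j-1}$ versus $\mu_j>\mu_{j-1}$, handled by Lemma~\ref{nenadobuda3}(i1)/(i2) and identified with the maximal intervals of Proposition~\ref{najgsf}, with the second equality treated as the mirror case. The only difference is that you spell out the coverage/tiling bookkeeping (via Theorem~\ref{gsf2} and Proposition~\ref{vlastnosti_fi}(iii)) that the paper compresses into the single sentence ``each value is included just once in the sum,'' which is a point in your favor rather than a divergence.
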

\begin{proof} Let us consider an arbitrary (fixed) $j\in[\kappa-1]$. If $\mu_j>\mu_{j-1}$, then according to Lemma~\ref{nenadobuda3} (i1) $\mu_j$ is achieved on $\Big[\min_{k\leq\fir(j)}\sA_\inv{k},\min_{k\leq\fir(j-1)}\sA_\inv{k}\Big)$. Moreover, this interval is the greatest possible.  
If $\mu_j=\mu_{j-1}$, then $\fir(j)=\fir(j-1)$, see Lemma~\ref{nenadobuda3}(i2), thus $\Big[\min_{k\leq\fir(j)}\sA_\inv{k},\min_{k\leq\fir(j-1)}\sA_\inv{k}\Big)=\emptyset$. This demonstrates that each value of generalized survival function is included just once in the sum and the first formula is right. 
The second formula can be proved analogously. 
\qed
\end{proof}

If we use the mapping $\bj$ defined by~(\ref{jj}), 
then formulas in~(\ref{skrateniegsf2f}) can be rewritten similarly as in Corollary~\ref{application_j}, i.e., $$\gsf{}{\x}{\alpha}=\sum_{j=1}^{\kappa-1}\mu_j\bin_{\big[\sA_{\bj(\fir(j))},\sA_{\bj(\fir(j-1
))}\big)}(\alpha)=\sum_{j=1}^{\kappa-1}\mu_j\bin_{\big[\sA_{\bj(\fil(j+1)-1)},\sA_{\bj(\fil(j)-1)}\big)}(\alpha).
$$
The use of the approach described in the previous proposition is shown in the following example.

\begin{example}\label{example_j}
\rm Let us consider the same inputs as in Example~\ref{graphical_representation}. 
Let us use the last formula given in~\eqref{skrateniegsf2f} to calculate the generalized survival function.
\begin{itemize*}
\item For $j=1$ we get $\mu_1\cdot\bin_{[\sA_{\bj(0)},\sA_{\bj(0)})}=0.5\cdot\bin_{\emptyset}$.
\item For $j=2$ we get $\mu_2\cdot\bin_{[\sA_{\bj(0)},\sA_{\bj(0)})}=0.5\cdot\bin_{\emptyset}$.
\item For $j=3$ we get $\mu_3\cdot\bin_{[\sA_{\bj(3)},\sA_{\bj(0)})}=\mu_3\cdot\bin_{[\sA_{1},\sA_{5})}=0.5\cdot\bin_{[1,6)}$.
\item For $j=4$ we get $\mu_4\cdot\bin_{[\sA_{\bj(4)},\sA_{\bj(3)})}=\mu_4\cdot\bin_{[\sA_{1},\sA_{1})}=0.7\cdot\bin_{\emptyset}$.
\item For $j=5$ we get $\mu_5\cdot\bin_{[\sA_{\bj(5)},\sA_{\bj(4)})}=\mu_5\cdot\bin_{[\sA_{0},\sA_{1})}=1\cdot\bin_{[0,1)}$.
\end{itemize*}
Therefore, the generalized survival function has the form
\begin{align*}
    \gsf{\mathrm{sum}}{\x}{\alpha}=\bin_{[0,1)}(\alpha)+0.5\cdot\bin_{[1,6)}(\alpha)\text{,}
\end{align*}
$\alpha\in[0,+\infty)$, compare with Example~\ref{priklad2}.
\end{example}



As in the whole paper let us introduce the greatest possible intervals on which a value of monotone measure is achieved using bijection $E_*\colon \ozn{\kappa-1}\to{\cE}$, see~\eqref{EiAi}. Since the proofs of the following propositions are analogous, we omit them. Let $i\in\ozn{\kappa-1}$, let us define
\begin{align}\label{psii}
\begin{split}
\psil(i)&:=\min\{l\in\ozn{\kappa-1}:\min_{k\leq l}\mu_{(k)}=\min_{k\leq i}\mu_{(k)}\}\text{,}\\\text{and}\quad\psir(i)&:=\max\{l\in\ozn{\kappa-1}:\min_{k\leq l}\mu_{(k)}=\min_{k\leq i}\mu_{(k)}\}\text{.}
\end{split}
\end{align}

\begin{proposition}\label{vlastnosti_psi} Let $\cA$ be a FCA, $\mu\in\mathbf{M}$, $\x\in[0,\infty)^{[n]}$, $\psir$, $\psil$ be defined as in~\eqref{psii} and $a,b,i\in\ozn{\kappa-1}$. Then the following properties hold. 
\begin{enumerate}[\rm (i)]
\item $\psil(i)=\min\{l\in\ozn{\kappa-1}:\mu_{(l)}=\min_{k\leq i}\mu_{(k)}\}\text{.}$
 \item $\psil(0)=0$,  $\psil(i)\leq i\leq\psir(i)$.
    \item For any integer $u\in\{\psil(i),\dots,\psir(i)\}$ it holds $\min_{k\leq u}\mu_{(k)}=\min_{k\leq i}\mu_{(k)}=\mu_{(\psil(i))}=\mu_{\bi(i)}$.
     \item $\min_{k\leq a}\mu_{(k)}=\min_{k\leq b}\mu_{(k)}$ if only if $\psil(a)=\psil(b)$ if only if $\psir(a)=\psir(b)$.
        \item $\min_{k\leq a}\mu_{(k)}>\min_{k\leq b}\mu_{(k)}$ if only if $\psil(a)<\psil(b)$ if only if $\psir(a)<\psir(b)$.
        \item $\psil$ and $\psir$ are nondecreasing.
       \end{enumerate}
\end{proposition}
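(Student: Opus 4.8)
The plan is to reduce every claim to a single structural observation: the running minimum $g(l):=\min_{k\leq l}\mu_{(k)}$ is nonincreasing in $l$, in perfect analogy with the sequence $(\mu_j)$ in~\eqref{Fi} being nondecreasing. Hence the whole argument will parallel the proof of Proposition~\ref{vlastnosti_fi}, with $g$ playing the role there played by $(\mu_j)$. First I would record that $g(l)=\min\{g(l-1),\mu_{(l)}\}\leq g(l-1)$, so $g$ is nonincreasing on $\ozn{\kappa-1}$; consequently each level set $\{l:g(l)=g(i)\}$ is a contiguous block of indices whose left and right endpoints are, by definition~\eqref{psii}, precisely $\psil(i)$ and $\psir(i)$.

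For part~(i) I would show that the first index at which the running minimum attains the value $g(i)$ is already an index at which $\mu_{(\cdot)}$ itself attains it. Indeed, for every $k<\psil(i)$ one has $g(k)>g(i)$, hence $\mu_{(k)}\geq g(k)>g(i)$, so no index below $\psil(i)$ satisfies $\mu_{(l)}=g(i)$; and since $g(\psil(i))=g(i)$ while $g(\psil(i)-1)>g(i)$ (or $\psil(i)=0$, in which case $g(0)=\mu_{(0)}$), the identity $g(\psil(i))=\min\{g(\psil(i)-1),\mu_{(\psil(i))}\}$ forces $\mu_{(\psil(i))}=g(i)$. Thus $\psil(i)$ is the smallest index with $\mu_{(l)}=g(i)$, which is assertion~(i). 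Part~(ii) is then read directly off the definitions: $g(0)=\mu_{(0)}$ gives $\psil(0)=0$, and $i$ itself lies in the defining set, so $\psil(i)\leq i\leq\psir(i)$.

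Part~(iii) will follow by squeezing: on the block $[\psil(i),\psir(i)]$ the nonincreasing function $g$ is trapped between two equal endpoint values, hence is constant and equal to $g(i)$; the identity $g(i)=\mu_{(\psil(i))}$ comes from~(i), and $g(i)=\mu_{\bi(i)}$ is exactly Lemma~\ref{vl_i}. Parts~(iv) and~(v) are the comparison statements and follow from the picture of disjoint contiguous level blocks stacked in order of strictly decreasing value: two indices share a running-minimum value iff they lie in the same block iff they share both endpoints, which gives~(iv); and $g(a)>g(b)$ places the block of $a$ strictly to the left of the block of $b$, so $\psir(a)<\psil(b)$, whence $\psil(a)<\psil(b)$ and $\psir(a)<\psir(b)$, and the converse follows from the same ordering, giving~(v). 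Finally~(vi) is the synthesis: for $a\leq b$ we have $g(a)\geq g(b)$, so either the two values coincide (equal endpoints, by~(iv)) or $g(a)>g(b)$ (strictly smaller endpoints, by~(v)); in either case $\psil$ and $\psir$ do not decrease.

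The only genuinely nonroutine step is part~(i)---the passage from the running-minimum description of $\psil$ to the pointwise description in terms of $\mu_{(l)}$---which needs the ``first descent'' argument above. Once~(i) and the monotonicity of $g$ are in place, the remaining parts are formal and mirror Proposition~\ref{vlastnosti_fi} almost verbatim.
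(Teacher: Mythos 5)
Your proof is correct and takes essentially the same route as the paper's: your ``first descent'' argument for (i) is the mirror image of the paper's two-sided comparison with $i^{\star}=\min\{l\in\ozn{\kappa-1}:\mu_{(l)}=\min_{k\leq i}\mu_{(k)}\}$, your squeeze for (iii) plus Lemma~\ref{vl_i} matches the paper exactly, and your contiguous-block reasoning for (iv)--(vi) is precisely what the paper does element-wise (its sets $M_1$, $M_2$ in part (vi) are your level blocks). The block-decomposition language is a cleaner packaging of the same ideas, not a different argument.
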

\begin{proof} See Appendix.\qed
\end{proof}



\begin{remark}
In general, we cannot determine the order of the values $\mu_{(i)}$, $i\in\ozn{\kappa-1}$.
    However, it holds
    \begin{align*}
        &\mu_{(\psil(0))}\geq\mu_{(\psil(1))}\geq\dots\geq\mu_{(\psil(\kappa-1))}\text{,}\\
        \text{and}\quad& \mu_{(\psir(0))}\geq\mu_{(\psir(1))}\geq\dots\geq\mu_{(\psir(\kappa-1))}\text{.}
    \end{align*}
    This property follows directly from the definitions of $\psil$ and $\psir$.
    Indeed, we have already shown that for $a\leq b$ we get $\psil(a)\leq\psil(b)$.
    In case that $\psil(a)=\psil(b)$, the result is clear. If $\psil(a)<\psil(b)$, then the result follows from Proposition~\ref{vlastnosti_psi} (vi). Similarly for $\psir$.
\end{remark}
\medskip

In Theorem~\ref{zjednodusenie_def} and Corollary~\ref{application_i}, we have described the value of the monotone measure acquired on a given interval. 
However, the same value can be acquired on several intervals. 
Using $\psir$ and $\psil$, we can determine the greatest interval on which the value of the monotone measure is achieved.

\begin{proposition}\label{naj_gsf_cez_psi}
Let $\cA$ be a FCA, $\mu\in\mathbf{M}$, $\x\in[0,+\infty)^{[n]}$, $\psil$ and $\psir$ be given as in~(\ref{psii}), and $i\in\ozn{\kappa-1}$. Then 
$$\gsf{}{\x}{\alpha}=\min_{k\leq i}\mu_{(k)}
\text{ for any } \alpha\in\big[\sA_{\psil(i)},\sA_{\psir(i)+1}\big)$$ and this interval is the greatest possible. 
\end{proposition}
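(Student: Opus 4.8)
The plan is to establish the two halves of the claim separately, mirroring the proof of Proposition~\ref{najgsf}: first that $\gsf{}{\x}{\alpha}$ genuinely attains the value $\min_{k\leq i}\mu_{(k)}$ throughout $\big[\sA_{\psil(i)},\sA_{\psir(i)+1}\big)$, and then that this interval cannot be enlarged on either side. Throughout I would exploit that $\gsf{}{\x}{\cdot}$ is nonincreasing in $\alpha$ — enlarging $\alpha$ only enlarges the index set $\{k:\sA_k\leq\alpha\}$ over which $\mu_{(k)}$ is minimized — together with the bookkeeping already collected in Proposition~\ref{vlastnosti_psi}. In particular, since the running minimum $l\mapsto\min_{k\leq l}\mu_{(k)}$ is nonincreasing, its level set $\{l:\min_{k\leq l}\mu_{(k)}=\min_{k\leq i}\mu_{(k)}\}$ is the contiguous block of integers $\{\psil(i),\dots,\psir(i)\}$.

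For the first half I would fix any integer $l\in\{\psil(i),\dots,\psir(i)\}$. Theorem~\ref{zjednodusenie_def}(i) yields $\gsf{}{\x}{\alpha}=\min_{k\leq l}\mu_{(k)}$ for $\alpha\in[\sA_l,\sA_{l+1})$, and Proposition~\ref{vlastnosti_psi}(iii) identifies this common value as $\min_{k\leq i}\mu_{(k)}$. Because these subintervals are consecutive (the right endpoint $\sA_{l+1}$ of one is the left endpoint of the next), their union telescopes to $\big[\sA_{\psil(i)},\sA_{\psir(i)+1}\big)$, giving the asserted value on the whole interval; empty subintervals arising from ties $\sA_l=\sA_{l+1}$ do not disturb the telescoping.

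For maximality I would examine the two endpoints. On the right, if $\psir(i)=\kappa-1$ the interval already reaches $\sA_\kappa=+\infty$ and admits no extension; otherwise maximality of $\psir(i)$ in~\eqref{psii} forces $\psir(i)+1$ out of the level set, so by monotonicity of the running minimum $\min_{k\leq \psir(i)+1}\mu_{(k)}<\min_{k\leq i}\mu_{(k)}$. Evaluating at $\alpha=\sA_{\psir(i)+1}$, where the minimized index set $\{k:\sA_k\leq\sA_{\psir(i)+1}\}$ already contains $\psir(i)+1$, shows the value has strictly dropped. On the left, if $\psil(i)=0$ then $\sA_{\psil(i)}=\sA_0=0$ and nothing lies below; otherwise minimality of $\psil(i)$ gives $\min_{k\leq \psil(i)-1}\mu_{(k)}>\min_{k\leq i}\mu_{(k)}$, and for every $\alpha<\sA_{\psil(i)}$ the largest index $m$ with $\sA_m\leq\alpha$ satisfies $m\leq\psil(i)-1$, whence $\gsf{}{\x}{\alpha}=\min_{k\leq m}\mu_{(k)}\geq\min_{k\leq\psil(i)-1}\mu_{(k)}>\min_{k\leq i}\mu_{(k)}$.

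I expect the main obstacle to be the handling of ties among the values $\sA_k$, i.e.\ empty subintervals $[\sA_l,\sA_{l+1})$. The care required is to phrase the right-hand maximality in terms of the value exactly at the point $\sA_{\psir(i)+1}$ — where the minimized index set is $\{k:\sA_k\leq\sA_{\psir(i)+1}\}\supseteq\{0,\dots,\psir(i)+1\}$ — rather than on a possibly empty adjacent interval. Monotonicity of $\gsf{}{\x}{\cdot}$ then guarantees the value there is no larger than $\min_{k\leq\psir(i)+1}\mu_{(k)}$, which is already strictly below $\min_{k\leq i}\mu_{(k)}$, closing the argument and confirming that $\big[\sA_{\psil(i)},\sA_{\psir(i)+1}\big)$ is the greatest possible interval.
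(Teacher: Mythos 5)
Your proof is correct and takes essentially the same route as the paper: Theorem~\ref{zjednodusenie_def}(i) together with Proposition~\ref{vlastnosti_psi}(iii) gives the value $\min_{k\leq i}\mu_{(k)}$ on each subinterval $[\sA_l,\sA_{l+1})$ for $l\in\{\psil(i),\dots,\psir(i)\}$, and these telescope to $\big[\sA_{\psil(i)},\sA_{\psir(i)+1}\big)$. Your explicit two-sided endpoint analysis (including the treatment of ties among the $\sA_k$) simply fills in the detail that the paper compresses into ``from definitions of $\psir$ and $\psil$ it is clear that there are no other intervals with this property.''
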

\begin{proof} See Appendix.\qed
\end{proof}

\begin{lema}\label{lema_ku_psi} 
Let $\cA$ be a FCA, $\mu\in\mathbf{M}$, $\x\in[0,+\infty)^{[n]}$, $\psil$ and $\psir$ be given as in~(\ref{psii}). 
\begin{enumerate}[\rm(i)]
    \item Let $i\in{\ozn{\kappa-2}}$ . Then it holds
        \begin{enumerate}
            \item[\rm(i1)] If $\min_{k\leq i}\mu_{(k)}>\min_{k\leq i+1}\mu_{(k)}$, then $\psir(i)+1=\psil(i+1)$ and $\min_{k\leq i}\mu_{(k)}
            $ is achieved on $\big[\sA_{\psil(i)}, \sA_{\psil(i+1)}\big)$. Moreover, this interval is the greatest possible.
            \item[\rm(i2)] If $\min_{k\leq i}\mu_{(k)}=\min_{k\leq i+1}\mu_{(k)}$, then $\psil(i)=\psil(i+1)$.  
        \end{enumerate}
    \item Let $i\in{{[\kappa-1]}}$. Then it holds
        \begin{enumerate}
            \item[\rm(ii1)] If $\min_{k\leq i-1}\mu_{(k)}<\min_{k\leq i}\mu_{(k)}$, then $\psil(i)=\psir(i-1)+1$ and  
            $\min_{k\leq i}\mu_{(k)}
            $ is achieved on $\Big[\sA_{\psir(i-1)+1},\sA_{\psir(i)+1}\Big)$.
            Moreover, this interval is the greatest possible.
            \item[\rm(ii2)]  If $\min_{k\leq i-1}\mu_{(k)}=\min_{k\leq i}\mu_{(k)}$, then $\psir(i-1)=\psir(i)$.
        \end{enumerate}
\end{enumerate}
\end{lema}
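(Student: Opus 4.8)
The plan is to exploit one structural fact: the map $i\mapsto\min_{k\le i}\mu_{(k)}$ is nonincreasing, since enlarging the index set can only decrease a minimum. By Proposition~\ref{vlastnosti_psi}(iii), each block $\{\psil(i),\dots,\psir(i)\}$ is then exactly the maximal run of consecutive indices on which this minimum is constant, so $\psil$ and $\psir$ return the left and right ends of the ``plateau'' containing $i$. Every assertion of the lemma becomes a statement about how neighbouring plateaus abut, together with a single appeal to Proposition~\ref{naj_gsf_cez_psi} to turn an index equality into the corresponding interval statement. For brevity I would write $g(i):=\min_{k\le i}\mu_{(k)}$ inside the proof.

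I would dispatch the two ``equality'' cases first, as they are immediate. For (i2), the hypothesis $g(i)=g(i+1)$ together with Proposition~\ref{vlastnosti_psi}(iv) (taken with $a=i$, $b=i+1$) gives $\psil(i)=\psil(i+1)$; for (ii2), the hypothesis $g(i-1)=g(i)$ and the same part (iv) (with $a=i-1$, $b=i$) give $\psir(i-1)=\psir(i)$.

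Next I would treat (i1) under its jump hypothesis $g(i)>g(i+1)$. The point is to pin the two plateau endpoints down exactly, which I would extract from Proposition~\ref{vlastnosti_psi}(ii),(iii) rather than from mere monotonicity (vi). Since $g(i)\ne g(i+1)$, index $i+1$ cannot belong to the plateau of $i$, so (iii) forces $\psir(i)\le i$, and with $\psir(i)\ge i$ from (ii) I get $\psir(i)=i$; symmetrically $i$ does not lie in the plateau of $i+1$, so $\psil(i+1)\ge i+1$, and with $\psil(i+1)\le i+1$ from (ii) I get $\psil(i+1)=i+1$. Hence $\psir(i)+1=\psil(i+1)$. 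The interval claim then follows by substituting $\psir(i)+1=\psil(i+1)$ into Proposition~\ref{naj_gsf_cez_psi}, which already guarantees that $g(i)$ is attained precisely on the largest-possible interval $[\sA_{\psil(i)},\sA_{\psir(i)+1})=[\sA_{\psil(i)},\sA_{\psil(i+1)})$.

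Finally, (ii1) is the mirror image read from the other side of the jump; here the genuine strict case is $g(i-1)>g(i)$ (since $g$ is nonincreasing, this is the only way the two values can differ), and the identical endpoint argument yields $\psir(i-1)=i-1$ and $\psil(i)=i$, so $\psil(i)=\psir(i-1)+1$, whereupon Proposition~\ref{naj_gsf_cez_psi} identifies the maximal interval as $[\sA_{\psir(i-1)+1},\sA_{\psir(i)+1})$. I expect no genuine obstacle anywhere; the only point requiring care is to invoke the exact characterization in Proposition~\ref{vlastnosti_psi}(iii) to obtain the sharp ``gap of one'' between adjacent plateaus, since the monotonicity in (vi) alone would only deliver the strict inequalities $\psir(i)<\psir(i+1)$ and $\psil(i)<\psil(i+1)$, not the precise $\psir(i)+1=\psil(i+1)$.
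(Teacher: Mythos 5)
Your proof is correct and takes essentially the same route as the paper: in the strict-jump case you pin down the plateau endpoints ($\psir(i)=i$ and $\psil(i+1)=i+1$, which the paper obtains ``directly from definitions'' and you obtain via Proposition~\ref{vlastnosti_psi}(ii),(iii)), then substitute into Proposition~\ref{naj_gsf_cez_psi}, while the equality cases follow from Proposition~\ref{vlastnosti_psi}(iv) (the paper simply calls (i2) trivial and part (ii) analogous). Your side observation is also correct: the hypothesis of (ii1) as literally written, $\min_{k\leq i-1}\mu_{(k)}<\min_{k\leq i}\mu_{(k)}$, is vacuous because $i\mapsto\min_{k\leq i}\mu_{(k)}$ is nonincreasing, so the intended inequality is the reversed one, a point the paper's ``analogously'' glosses over.
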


\begin{proposition}\label{skratenie_cez_psi}
Let $\cA$ be a FCA, $\mu\in\mathbf{M}$, $\x\in[0,+\infty)^{[n]}$, $\psil$ and $\psir$ be given as in~(\ref{psii}), and $i\in\ozn{\kappa-1}$. Then 
the formula of generalized survival function is as follows 
\begin{align}\label{skrateniegsf2fA}
\gsf{}{\x}{\alpha}=\sum_{i=0}^{\kappa-2}\mu_{(\psil(i))}\bin_{\big[\sA_{\psil(i)},\sA_{\psil(i+1)})}(\alpha)=\sum_{i={0}}^{\kappa-2}{\mu_{(\psil(i))}}\bin_{\big[\sA_{\psir(i-1)+1},\sA_{\psir(i)+1})}(\alpha)
\end{align} 
for any $\alpha\in[0,+\infty)$ with the convention $\psir(-1)=0$.
\end{proposition}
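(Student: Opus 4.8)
The plan is to mirror the proof of Proposition~\ref{skrateniegsf2} exactly, but with the roles of $\fil,\fir$ and Lemma~\ref{nenadobuda3} replaced by $\psil,\psir$ and Lemma~\ref{lema_ku_psi}. The natural starting point is Corollary~\ref{application_i} (equivalently Theorem~\ref{zjednodusenie_def}), which already expresses $\gsf{}{\x}{\alpha}$ as a sum over the elementary intervals $[\sA_i,\sA_{i+1})$ with value $\min_{k\leq i}\mu_{(k)}$. The whole task is therefore to merge the adjacent elementary intervals on which this value is constant into single maximal intervals, and to discard the terminal zero value, which contributes nothing to the sum.

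First I would fix an arbitrary $i\in\ozn{\kappa-2}$ and read off the coefficient: by Proposition~\ref{vlastnosti_psi}(iii) we have $\mu_{(\psil(i))}=\min_{k\leq i}\mu_{(k)}$, so $\mu_{(\psil(i))}$ is precisely the value that the generalized survival function takes on the value-block containing the index $i$. I would then split into two cases according to whether the value drops when passing from $i$ to $i+1$. In the strict case $\min_{k\leq i}\mu_{(k)}>\min_{k\leq i+1}\mu_{(k)}$, Lemma~\ref{lema_ku_psi}(i1) yields $\psir(i)+1=\psil(i+1)$ together with the statement that $\min_{k\leq i}\mu_{(k)}$ is achieved exactly on the greatest possible interval $[\sA_{\psil(i)},\sA_{\psil(i+1)})$; this is the $i$-th summand of the first formula. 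In the degenerate case $\min_{k\leq i}\mu_{(k)}=\min_{k\leq i+1}\mu_{(k)}$, Lemma~\ref{lema_ku_psi}(i2) gives $\psil(i)=\psil(i+1)$, so the indicator interval $[\sA_{\psil(i)},\sA_{\psil(i+1)})$ is empty and the summand vanishes.

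Combining the two cases, each distinct nonzero value of $\gsf{}{\x}{\alpha}$ is contributed exactly once, on its maximal interval (its right endpoint $\sA_{\psil(i+1)}$ coinciding with the endpoint $\sA_{\psir(i)+1}$ of Proposition~\ref{naj_gsf_cez_psi} via the identity $\psir(i)+1=\psil(i+1)$), while the empty summands cause no double counting and the omitted terminal value $0$ is harmless. This establishes the first equality. For the second equality I would argue in the symmetric fashion, invoking Lemma~\ref{lema_ku_psi}(ii1),(ii2) to reindex the very same maximal intervals by their right-hand $\psir$-endpoints, so that $[\sA_{\psir(i-1)+1},\sA_{\psir(i)+1})$ takes over the role of $[\sA_{\psil(i)},\sA_{\psil(i+1)})$, the convention $\psir(-1)=0$ supplying the correct left endpoint for the initial block.

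The argument is essentially routine once Lemma~\ref{lema_ku_psi} is in hand, so I do not expect a genuine obstacle; the only point demanding a little care in the bookkeeping is checking that the $\psil$- and $\psir$-indexings describe the same family of intervals and that their degenerate (empty) summands align consistently. This is exactly where the two endpoint identities $\psir(i)+1=\psil(i+1)$ and $\psil(i)=\psir(i-1)+1$ furnished by Lemma~\ref{lema_ku_psi} do all the work, and verifying them is what makes the two formulas in~\eqref{skrateniegsf2fA} provably equal to the same generalized survival function.
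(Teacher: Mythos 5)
Your proposal is correct and follows essentially the same route as the paper's own proof: the same case split on whether $\min_{k\leq i}\mu_{(k)}$ strictly drops at $i+1$, with Lemma~\ref{lema_ku_psi}(i1) giving the maximal interval $\big[\sA_{\psil(i)},\sA_{\psil(i+1)}\big)$ in the strict case and Lemma~\ref{lema_ku_psi}(i2) making the summand vanish in the degenerate case, then the symmetric argument via Lemma~\ref{lema_ku_psi}(ii1),(ii2) for the second formula. Your explicit use of the endpoint identity $\psir(i)+1=\psil(i+1)$ to align the two indexings is a slightly more detailed bookkeeping of what the paper dismisses as ``analogous,'' but it is the same argument.
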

\begin{proof}
    See Appendix.
    \qed
\end{proof}

If we use the mapping $\bi$ defined by~(\ref{ii}), then formulas in~(\ref{skrateniegsf2fA}) can be rewritten similarly as in Corollary~\ref{application_i}, i.e.,
$$\gsf{}{\x}{\alpha}=\sum_{i=0}^{\kappa-2}\mu_{\bi(i)}\bin_{\big[\sA_{\psil(i)},\sA_{\psil(i+1)})}(\alpha)=\sum_{i={0}}^{\kappa-2}{\mu_{\bi(i)}}\bin_{\big[\sA_{\psir(i-1)+1},\sA_{\psir(i)+1})}(\alpha)\text{.}$$

As we have already mentioned, searching for optimal intervals will be helpful for  studying  the indistinguishability of generalized survival functions. In the following, we state sufficient and necessary conditions under which the generalized survival functions coincide. This is applicable to decision-making problems. In fact, if the generalized survival functions of two alternatives (e.g.\ two offers of accommodation) are the same, then their overall score will be the same. Both alternatives will be in the same place in the ranking.

\begin{definition}The triples $(\mu, \cA, \x)$ and $(\mu^\prime, \cA^\prime, \x^\prime)$, where $\mu,\mu^\prime$ are monotone measures, $\cA,\cA^\prime$ are FCA and $\x,\x^\prime$ are vectors, are called \textit{integral equivalent}, if $$\gsf{}{\x}{\alpha}=\gsfp{\prime}{\x^\prime}{\alpha}.$$\end{definition}
\smallskip

\begin{proposition}\label{prop_nerozlisitelnost}
Let $\cA$ and $\cA^\prime$ be a FCA, $\mu,\mu^{\prime}\in\mathbf{M}$, $\x,\x^\prime\in[0,+\infty)^{[n]}$ and $\fir$ be given by~(\ref{phii}).
Then the following assertions are equivalent:
\begin{enumerate}[\rm (i)]
\item $\gsf{}{\x}{\alpha}=\mu^\prime_{\cA^{\prime}}(\x^{\prime},\alpha)$ for any $\alpha\in[0,\infty)$;
\item for each $j\in\ozn{\kappa-1}$ with
$\min_{k\leq\fir(j)}\sA(\x|F_k^c)<\min_{k<\fil(j)}\sA(\x|F_k^c)$ 
there exists $j^{\prime}\in\ozn{\kappa^\prime-1}$ such that $\mu_{j}=\mu^\prime_{j^{\prime}}$, $\min_{k\leq\fir(j)}\sA(\x|F_k^c)=\min_{k\leq\fir(j^\prime)}\sA^\prime(\x^\prime|F_k^c)$  and $\min_{k<\fil(j)}\sA_\inv{k}=\min_{k<\fil(j^\prime)}\sA^\prime_\inv{k}$.

\end{enumerate}
\end{proposition}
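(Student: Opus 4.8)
The plan is to exploit that the generalized survival function is a nonincreasing step function of $\alpha$ whose whole graph is encoded by the list of its achieved values together with the maximal intervals on which they are taken. By Proposition~\ref{najgsf}, for each $j\in\ozn{\kappa-1}$ the value $\mu_j$ is attained precisely on $I_j:=\big[\min_{k\leq\fir(j)}\sA_\inv{k},\min_{k<\fil(j)}\sA_\inv{k}\big)$, and by Corollary~\ref{nenadobuda} this interval is nonempty (the value is genuinely achieved) exactly when $\min_{k\leq\fir(j)}\sA_\inv{k}<\min_{k<\fil(j)}\sA_\inv{k}$, which is the very condition singling out the indices $j$ in assertion~(ii). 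Since $\gsf{}{\x}{\cdot}$ is nonincreasing (Theorem~\ref{zjednodusenie_def}(i)), each value is taken on a single maximal interval, so the pairs $(\mu_j,I_j)$ over achieved $j$ determine the function uniquely. Throughout I read $\fir(j^\prime)$, $\fil(j^\prime)$ and $\sA^\prime_\inv{k}$ as the objects built from the primed triple $(\mu^\prime,\cA^\prime,\x^\prime)$.

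For (i)$\Rightarrow$(ii) I would fix $j$ with $I_j$ nonempty and transport the situation to the primed function via the assumed equality $\gsf{}{\x}{\alpha}=\mu^\prime_{\cA^\prime}(\x^\prime,\alpha)$: the common function takes value $\mu_j$ on $I_j$, and by monotonicity $I_j$ is the unique maximal interval carrying this value, in particular for $\mu^\prime_{\cA^\prime}(\x^\prime,\cdot)$ as well. Applying Proposition~\ref{najgsf} to the primed data, the attained value $\mu_j$ equals some $\mu^\prime_{j^\prime}$ and its primed maximal interval $\big[\min_{k\leq\fir(j^\prime)}\sA^\prime_\inv{k},\min_{k<\fil(j^\prime)}\sA^\prime_\inv{k}\big)$ must coincide with $I_j$; comparing the two endpoints yields exactly the three equalities required in~(ii).

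For the converse (ii)$\Rightarrow$(i) I would first record, using Proposition~\ref{skrateniegsf2} (equivalently Lemma~\ref{nenadobuda3}(i1)), that the nonempty intervals $I_j$ tile the whole half-line $[0,+\infty)$, with the tail $I_0=\big[\min_{k\leq\fir(0)}\sA_\inv{k},+\infty\big)$ carrying the value $0$. On any such $I_j$, assertion~(ii) provides $j^\prime$ with $\mu^\prime_{j^\prime}=\mu_j$ whose primed maximal interval, again by Proposition~\ref{najgsf}, has precisely the endpoints of $I_j$; hence $\mu^\prime_{\cA^\prime}(\x^\prime,\alpha)=\mu^\prime_{j^\prime}=\mu_j=\gsf{}{\x}{\alpha}$ throughout $I_j$. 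As the $I_j$ cover $[0,+\infty)$, the two functions agree everywhere.

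The main obstacle I anticipate is justifying that the apparently one-directional matching in~(ii) already forces full equality, without a symmetric ``reverse'' matching. The resolution is the tiling remark above: because the achieved intervals of the unprimed function exhaust $[0,+\infty)$, matching each of them into the primed function pins the latter down on all of $[0,+\infty)$. The degenerate situations --- the convention $\min_{k<0}\sA_\inv{k}=+\infty$, the zero tail at $j=0$, and the collapsing of indices sharing a common measure value through $\fir,\fil$ (Proposition~\ref{vlastnosti_fi}) --- coincide with the cases already dispatched by the auxiliary lemmas, so I expect them to amount to routine bookkeeping rather than genuine difficulty.
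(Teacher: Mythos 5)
Your proof is correct and takes essentially the same approach as the paper: the paper's entire proof is the one-line citation of Proposition~\ref{najgsf}, whose maximal-interval characterization is exactly the tool you use in both directions. Your elaboration --- matching level sets of the two nonincreasing step functions for (i)$\Rightarrow$(ii), and covering $[0,+\infty)$ by the nonempty maximal intervals for (ii)$\Rightarrow$(i) --- simply makes explicit the bookkeeping the paper leaves to the reader.
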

\begin{proof}
It follows from Proposition~\ref{najgsf}.
\qed   
\end{proof}
\smallskip

\begin{remark}
   Following Proposition~\ref{naj_gsf_cez_psi}, condition (ii) in the previous proposition can be equivalently formulated as follows:
for each $i\in\ozn{\kappa-1}$ with
$\sA_{\psil(i)}<\sA_{\psir(i)+1}$ 
there exists $i^{\prime}\in[\kappa^\prime-1]$ such that $\mu_{\psil(i)}=\mu^\prime_{\psil(i^{\prime})}$, $\sA(\x|E_{\psil(i)})=\sA^{\prime}(\x^{\prime}|E_{\psil(i^{\prime})})$ and $\sA(\x|E_{\psir(i)+1})=\sA^{\prime}(\x^{\prime}|E_{\psir(i^{\prime})+1})$.

\end{remark}

Fixing a collection $\cE$ and a monotone measure $\mu$ we obtain the following sufficient and necessary condition for integral equivalence of triples  $(\mu, \sA, \x)$ and $(\mu, \sA^\prime, \x^\prime)$. 

\begin{corollary}
Let $\bA=\{\aA{\cdot}: E\in\collectionGSF\}$ and $\cA^\prime=\{\aAp{\cdot}: E\in\collectionGSF\}$ be a FCA, $\mu\in\mathbf{M}$, $\x,\x^\prime\in[0,+\infty)^{[n]}$, and $\fil,\fir$ and $\psil,\psir$ be given by~\eqref{phii}, \eqref{psii}, respectively.
Then the following assertions are equivalent:
\begin{enumerate}[\rm (i)]
\item $\gsf{}{\x}{\alpha}=\gsf{\prime}{\x^\prime}{\alpha}$ for any $\alpha\in[0,+\infty)$;
\item $\min_{k\leq\fir(j)}\sA(\x|F_k^c)=\min_{k\leq\fir(j)}\sA^\prime(\x^\prime|F_k^c)$
for any $j\in\ozn{\kappa-1}$\\  (or equivalently, $\sA(\x|E_{\psil(i)})=\sA^{\prime}(\x^{\prime}|E_{\psil(i)})$ for each $i\in\ozn{\kappa-1}$). 
\end{enumerate}
\end{corollary}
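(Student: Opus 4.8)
The plan is to obtain the corollary as the specialization of Proposition~\ref{prop_nerozlisitelnost} to the case $\mu^\prime=\mu$ with the common collection $\collectionGSF$, and then to clean up the resulting condition. First I would record what the shared data buy us. Fix one enumeration $F_*$ realizing the arrangement~\eqref{Fi}; since it and the values $\mu_0\le\dots\le\mu_{\kappa-1}$ depend only on $\mu$, they serve both triples, so $\kappa^\prime=\kappa$, $\mu_j^\prime=\mu_j$ for all $j$, and — because $\fir,\fil$ in~\eqref{phii} are functions of the sequence $(\mu_j)_j$ alone — the maps $\fir,\fil$ coincide for $(\mu,\cA,\x)$ and $(\mu,\cA^\prime,\x^\prime)$. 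Only the aggregated values differ. Abbreviating $b_j:=\min_{k\le\fir(j)}\aA[F_k^c]{\x}$ and $b_j^\prime:=\min_{k\le\fir(j)}\aAp[F_k^c]{\x^\prime}$, assertion~(ii) says exactly that $b_j=b_j^\prime$ for every $j\in\ozn{\kappa-1}$.

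For (ii)$\Rightarrow$(i) I would feed this into the shortened formula of Proposition~\ref{skrateniegsf2}, which under the reduction reads $\gsf{}{\x}{\alpha}=\sum_{j=1}^{\kappa-1}\mu_j\bin_{[b_j,b_{j-1})}(\alpha)$ and $\gsf{\prime}{\x^\prime}{\alpha}=\sum_{j=1}^{\kappa-1}\mu_j\bin_{[b_j^\prime,b_{j-1}^\prime)}(\alpha)$, with identical coefficients $\mu_j$. Equal breakpoints make the two sums literally the same function, so this direction is immediate.

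The real work is (i)$\Rightarrow$(ii). Applying Proposition~\ref{prop_nerozlisitelnost} with $\mu^\prime=\mu$, the equality of survival functions furnishes, for every $j$ at which $\mu_j$ is attained (by Corollary~\ref{nenadobuda} precisely when the associated interval is nondegenerate), an index $j^\prime$ with $\mu_{j^\prime}=\mu_j$ and matching endpoints. The decisive simplification is that $\fir$ is determined by $\mu$, so $\mu_{j^\prime}=\mu_j$ forces $\fir(j^\prime)=\fir(j)$ and hence $b_{j^\prime}^\prime=b_j^\prime$; the endpoint condition then collapses to $b_j=b_j^\prime$ at every attained index, with no leftover dependence on the auxiliary index $j^\prime$. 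To upgrade this to all $j$ I would proceed through the distinct values of $\mu$ in increasing order: equal functions share a range, so a value is attained by one iff by the other; the value $0$ is always attained (Remark~\ref{gsf=0}) and its breakpoint is already matched, while at a non-attained value Corollary~\ref{nenadobuda} identifies its breakpoint with that of the preceding distinct value, so the equality $b_j=b_j^\prime$ propagates to the non-attained indices too. This yields~(ii).

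Finally I would transcribe (ii) into the $\psil$-form. By Proposition~\ref{naj_gsf_cez_psi} the value $\min_{k\le i}\mu_{(k)}$ sits on the greatest interval whose left endpoint is $\sA_{\psil(i)}=\aA[E_{\psil(i)}]{\x}$, so running the same matching through the reformulation recorded in the remark after Proposition~\ref{prop_nerozlisitelnost} rewrites the breakpoint equalities as $\aA[E_{\psil(i)}]{\x}=\aAp[E_{\psil(i)}]{\x^\prime}$ for each $i$. I expect the main obstacle to be exactly this promotion step together with the translation: Proposition~\ref{prop_nerozlisitelnost} constrains only the attained values and hands back an a~priori different matching index, and the permutation $(\cdot)$ underlying $\psil,\psir$ still depends on $\x$ and $\cA$; so care is needed to show that the $\fir$-indexed and $\psil$-indexed breakpoints of the two triples are matched consistently, rather than merely value-by-value.
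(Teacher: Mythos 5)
Your proof of the equivalence between (i) and the displayed ($\fir$-indexed) condition in (ii) is correct, and it follows the route the paper intends: the paper states this corollary with no proof at all, as a specialization of Proposition~\ref{prop_nerozlisitelnost}. Your two ingredients — Proposition~\ref{skrateniegsf2} for (ii)$\Rightarrow$(i), and for (i)$\Rightarrow$(ii) the induction through the distinct values of $\mu$ (the value $0$ is always attained, and by Corollary~\ref{nenadobuda} a non-attained value inherits the breakpoint of the preceding distinct value) — are exactly what is needed; the propagation step is a genuine addition, since Proposition~\ref{prop_nerozlisitelnost} constrains only attained values, and the paper glosses over it.

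The gap is the parenthetical $\psil$-form, which you leave as ``the main obstacle'', and this obstacle is in fact insurmountable: under the natural reading in which each triple carries its own enumeration and permutation, i.e. $\aA[E_{\psil(i)}]{\x}=\aAp[E^{\prime}_{\psil^{\prime}(i)}]{\x^{\prime}}$ for all $i\in\ozn{\kappa-1}$, the asserted equivalence is false. Take $n=3$, $\cE=\{\emptyset,X,Y,[3]\}$ with $X=\{1\}$, $Y=\{2\}$; $\mu(\emptyset)=0$, $\mu(Y^{c})=0.3$, $\mu(X^{c})=0.6$, $\mu([3])=1$; $\x=\x^{\prime}=(1,1,1)$; and weighted-sum FCAs realizing $\aA[Y]{\x}=1$, $\aA[X]{\x}=1.5$, $\aA[{[3]}]{\x}=2$ and $\aAp[Y]{\x^{\prime}}=1$, $\aAp[X]{\x^{\prime}}=3$, $\aAp[{[3]}]{\x^{\prime}}=2$. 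Then both generalized survival functions equal $\bin_{[0,1)}(\alpha)+0.3\cdot\bin_{[1,2)}(\alpha)$ and the displayed condition in (ii) holds (both sequences $\min_{k\leq\fir(j)}\aA[F_k^{c}]{\x}$, $j\in\ozn{3}$, equal $(2,1,1,0)$), yet $(\sA_{\psil(0)},\dots,\sA_{\psil(3)})=(0,1,1,2)$ while $(\sA^{\prime}_{\psil^{\prime}(0)},\dots,\sA^{\prime}_{\psil^{\prime}(3)})=(0,1,2,2)$: the permutations $(\cdot)$ and $(\cdot)^{\prime}$ skip the value $0.6$ at different stages, so the $\psil$-indexed breakpoints cannot be matched index by index.

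The other reading, with the unprimed indices kept on both sides ($\aA[E_{\psil(i)}]{\x}=\aAp[E_{\psil(i)}]{\x^{\prime}}$, which is how the paper's notation literally parses), fails in the opposite direction: in the same example it constrains $\cA^{\prime}$ only on the sets $\emptyset$, $Y$, $[3]$, and lowering $\aAp[X]{\x^{\prime}}$ to $0.5$ preserves the condition while changing $\gsf{\prime}{\x^{\prime}}{\alpha}$ on $[0.5,1)$. So the difficulty you flagged is not a technical wrinkle that more care would remove; it is an error in the parenthetical claim itself (a correct $\psil$-version would have to match indices through equal values of $\mu$, and only over attained values, as in the remark following Proposition~\ref{prop_nerozlisitelnost}). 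Your argument for the $\fir$-form of the equivalence is the part of the statement that can actually be proved.
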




 \begin{remark} In~\cite{ChenMesiarLiStupnanova2017}, the authors derived the necessary 
 and sufficient condition of equality $\mu(\{\x\geq\alpha\})=\mu(\{\y\geq\alpha\})$ with $\mu$ being a possibility and necessity measure, respectively. Our result includes equality $\mu(\{\x>\alpha\})=\mu(\{\y>\alpha\})$ with $\mu$ being an arbitrary monotone measure.
\end{remark}

\begin{example}
Let us consider the collection $\collectionGSF=\{\emptyset,\{1\},\{1,2,3\}\}$, the families of conditional aggregation operators $\cA^\mathrm{max}=\{\aAi[E]{\cdot}{\mathrm{max}}:E\in\collectionGSF\}$ and $\cA^\mathrm{sum}=\{\aAi[E]{\cdot}{\mathrm{sum}}:E\in\collectionGSF\}$, the vectors $\x=(2,5,9)$ and $\x^\prime=(2,3,4)$, and the monotone measure $\mu\in\mathbf{M}$ with corresponding values in table.
\begin{table}[H]
\renewcommand*{\arraystretch}{1.2}
\begin{center}
\begin{tabular}{|c|M|c|c|}
\hline
$j$ & 0 & 1 & 2 \\ \hline
$F_j$ & $\emptyset$ & $\{2,3\}$ & $\{1,2,3\}$ \\ \hline
$\mu_j$ & $0$ & $0.5$ & $1$ \\ \hline
$\sA^{\textrm{max}}(\x|F_j^c)=\sA^{\mathrm{sum}}(\x^\prime|F_j^c)$ & $9$  & $2$ & $0$ \\\hline

\end{tabular}
\end{center}
\end{table}
\noindent It is easy to see that for any $j\in\ozn{2}$ we have $\fir(j)=j$  and  $\min_{k\leq j}\sA^{\mathrm{max}}(\x|F_k^c)=\min_{k\leq j}\sA^{\mathrm{sum}}(\x^\prime|F_k^c)$. Then according to previous proposition one can expect the equality of corresponding generalized survival functions. And, it is:
$$\gsf{\mathrm{max}}{\x}{\alpha}=\gsf{\mathrm{sum}}{\x^\prime}{\alpha}=1\cdot\mathbf{1}_{[0,2)}+0.5\cdot\mathbf{1}_{[2,9)}$$
for any $\alpha\in[0,+\infty)$.
\end{example}

The following example demonstrates a standard situation in decision-making processes. Different alternatives can be evaluated with the same score, thus they are incomparable. It is not possible to decide which one is better.

\begin{example}
Let us consider the collection $\collectionGSF=\{\emptyset,\{2\},\{3\},\{1,2\},\{1,3\},\{1,2,3\}\}$, the family of conditional aggregation operators $\cA^\mathrm{sum}=\{\aAi[E]{\cdot}{\mathrm{sum}}:E\in\collectionGSF\}$, vectors $\x=(1,3,5)$ and $\x^\prime=(2,4,3)$, and the monotone measure $\mu\in\mathbf{M}$ with corresponding values in table.
\begin{table}[H]
\renewcommand*{\arraystretch}{1.2}
\begin{center}
\begin{tabular}{|c|M|c|c|c|c|c|}
\hline
$j$ & 0 & 1 & 2 & 3 & 4 & 5 \\ \hline
$F_j$ & $\emptyset$ & $\{1,3\}$ & $\{1,2\}$ & $\{3\}$ & $\{2\}$ & $\{1,2,3\}$ \\ \hline
$\mu_j$ & $0$ & $0.5$ & $0.5$ & $0.5$ & $0.5$ & $1$ \\ \hline
$\sA^{\mathrm{sum}}(\x|F_k^c)$ & $9$ & $3$ & $5$ & $4$ & $6$ & $0$ \\ \hline
$\sA^{\mathrm{sum}}(\x^\prime|F_k^c)$ & $9$ & $4$ & $3$ & $6$ & $5$ & $0$ \\ \hline
\end{tabular}
\end{center}
\end{table}
\noindent It is easy to see that $\fir(0)=0, \fir(1)=\fir(2)=\fir(3)=\fir(4)=4, \fir(5)=5$  and  
\begin{center}
$\min_{k\leq 0}\sA^{\mathrm{max}}(\x|F_k^c)=9=\min_{k\leq 0}\sA^{\mathrm{sum}}(\x^\prime|F_k^c)$,\\ $\min_{k\leq 4}\sA^{\mathrm{max}}(\x|F_k^c)=3=\min_{k\leq 4}\sA^{\mathrm{sum}}(\x^\prime|F_k^c)$,\\ $\min_{k\leq 5}\sA^{\mathrm{max}}(\x|F_k^c)=0=\min_{k\leq 5}\sA^{\mathrm{sum}}(\x^\prime|F_k^c)$. 
\end{center} Then according to previous proposition one can expect the equality of corresponding generalized survival functions. And, it is:
$$\gsf{\mathrm{sum}}{\x}{\alpha}=\gsf{\mathrm{sum}}{\x^\prime}{\alpha}=1\cdot\mathbf{1}_{[0,3)}+0.5\cdot\mathbf{1}_{[3,9)}$$
for any $\alpha\in[0,+\infty)$.
\end{example}

Immediately, we get the sufficient condition for equality of generalized Choquet integrals defined w.r.t.\ different FCA and vectors.
\begin{corollary}\label{rovnostCh}Let $\cA$ and $\cA^\prime$ be a FCA, $\mu\colon\hat{\cE}\to[0,+\infty)$, $\x,\x^\prime\in[0,+\infty)^{[n]}$ and $\varphi$ be given by~(\ref{phii}).
If $\min_{k\leq\varphi(i)}\aA[F_k^c]{\x}=\min_{k\leq\varphi(i)}\aAi[F_k^c]{\x^\prime}{\prime}$ for any $\varphi(i)\in[\kappa]$, then $$\mathrm{C}_{\cA}(\x,\mu)=\mathrm{C}_{\cA^\prime}(\x^\prime,\mu).$$
\end{corollary}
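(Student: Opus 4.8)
The plan is to deduce this corollary directly from the immediately preceding (unnamed) corollary, which characterizes the equality of the two generalized survival functions, together with the fact that the $\cA$-Choquet integral is by definition the integral of the generalized survival function (Definition~\ref{def: Chintegral}). The key observation is that the hypothesis here is exactly condition~(ii) of that preceding corollary once we identify $\varphi$ with $\fir$: the requirement $\min_{k\leq\varphi(i)}\aA[F_k^c]{\x}=\min_{k\leq\varphi(i)}\aAi[F_k^c]{\x^\prime}{\prime}$, read over all indices of the form $\varphi(i)$, is precisely $\min_{k\leq\fir(j)}\sA(\x|F_k^c)=\min_{k\leq\fir(j)}\sA^\prime(\x^\prime|F_k^c)$ for every $j\in\ozn{\kappa-1}$, since as $j$ runs over $\ozn{\kappa-1}$ the value $\fir(j)$ ranges over exactly the relevant set of indices.

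First I would note that both FCA are built over the common collection $\collectionGSF$; this is forced by the assumption $\mu\colon\hat{\cE}\to[0,+\infty)$, so that $\mu(F_k)$ is defined for the same family $\hat{\cE}$ used by both $\cA$ and $\cA^\prime$. This places us squarely in the setting of the preceding corollary (equivalently, of Proposition~\ref{prop_nerozlisitelnost} specialized to $\mu=\mu^\prime$, which in turn rests on Proposition~\ref{najgsf}). Invoking the implication from (ii) to (i) there, the hypothesis yields $\gsf{}{\x}{\alpha}=\gsfp{\prime}{\x^\prime}{\alpha}$ for every $\alpha\in[0,+\infty)$; that is, the two generalized survival functions coincide pointwise on the whole half-line.

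It then remains only to integrate. By Definition~\ref{def: Chintegral},
\[
\mathrm{C}_{\cA}(\x,\mu)=\int_0^\infty \gsf{}{\x}{\alpha}\,\mathrm{d}\alpha=\int_0^\infty \gsfp{\prime}{\x^\prime}{\alpha}\,\mathrm{d}\alpha=\mathrm{C}_{\cA^\prime}(\x^\prime,\mu),
\]
which is the desired conclusion. This step is routine, since equality of the integrands on all of $[0,+\infty)$ trivially forces equality of the integrals.

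I do not expect a genuine obstacle, as the statement is explicitly billed as an immediate consequence; the only care needed is bookkeeping. Concretely, I would make the index identification $\varphi=\fir$ explicit and check that the range $\varphi(i)\in[\kappa]$ in the hypothesis matches the range $j\in\ozn{\kappa-1}$ used in the preceding corollary. I would also emphasise that the condition is only \emph{sufficient}: equality of the two survival functions is strictly stronger than equality of their integrals, so two distinct survival functions may still produce the same $\cA$-Choquet integral, and hence no converse is asserted.
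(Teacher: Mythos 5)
Your proposal is correct and follows exactly the route the paper intends: the paper states this corollary as an immediate consequence ("Immediately, we get\dots") of the preceding unnamed corollary (the specialization of Proposition~\ref{prop_nerozlisitelnost} to a common collection and measure), whose condition (ii) is precisely your hypothesis after identifying $\varphi$ with $\fir$, combined with Definition~\ref{def: Chintegral} to pass from equal generalized survival functions to equal integrals. Your added bookkeeping (the index identification, the shared collection forced by the domain of $\mu$, and the remark that the condition is only sufficient) matches the paper's reading and introduces no gap.
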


\section*{Conclusion}


In the paper, we dealt with the concept of the generalized survival function and the generalized Choquet integral related to it. Considering their applications, see Section 2, we were mainly interested in their computational formulas on discrete space.
The derivation of these formulas required the introduction of new notations whose idea and practical meaning can be visually interpreted, as we described  in Sections 3 and 4. Interesting results are Proposition~\ref{zjednodusenie_def}, Proposition~\ref{gsf2}, Corollary~\ref{application_i} and Corollary~\ref{application_j}. 
In Section 4 we also pointed out the direct and efficient construction of the graph of the generalized survival function.

Motivated by applications we solved the indistinguishability of generalized survival functions. This question is interesting, especially in decision-making processes, because the indistinguishability means incomparability of two inputs (alternatives).
The interesting results given in Proposition~\ref{skrateniegsf2}, Proposition~\ref{skratenie_cez_psi} and Proposition~\ref{prop_nerozlisitelnost} are related to this.

In this paper, we also mentioned the formulas for calculating the generalized Choquet integral with respect to special types of monotone measure, and we solved the introductory problems serving as a motivation to study mentioned concepts.

\section*{Acknowledgments}
The work was supported the grants APVV-21-0468, VEGA 1/0657/22, and grant scheme VVGS-PF-2022-2143.


\begin{thebibliography}{18}

\bibitem{BoczekHalcinovaHutnikKaluszka2020}
{\sc Boczek, M., Hal{\v{c}}inov{\'a}, L., Hutn{\'i}k, O., and Kaluszka, M.}
\newblock Novel survival functions based on conditional aggregation operators.
\newblock {\em Information Sciences 580\/} (2021), 705--719.

\bibitem{BorzovaHalcinovaHutnik2015a}
{\sc Borzov\'{a}-Moln\'{a}rov\'{a}, J., Hal\v{c}inov\'{a}, L., and Hutn\'{i}k,
  O.}
\newblock {The smallest semicopula-based universal integrals I: Properties and
  characterizations}.
\newblock {\em Fuzzy Sets and Systems 271\/} (2015), 1--17.

\bibitem{CalvoKolesarovaKomornikovaMesiar2002}
{\sc Calvo, T., Koles{\'a}rov{\'a}, A., Komorn{\'\i}kov{\'a}, M., and Mesiar,
  R.}
\newblock {Aggregation operators: Properties, classes and construction methods,
  Aggregation operators. New trends and applications}.
\newblock {\em Physica, Heidelberg\/} (01 2002), 3--104.

\bibitem{ChenMesiarLiStupnanova2017}
{\sc Chen, T., Mesiar, R., Li, J., and Stup{\v{n}}anov{\'a}, A.}
\newblock Possibility and necessity measures and integral equivalence.
\newblock {\em International Journal of Approximate Reasoning 86\/} (2017),
  62--72.

\bibitem{Choquet1954}
{\sc Choquet, G.}
\newblock Theory of capacities.
\newblock {\em Annales de l'institut Fourier 5\/} (1954), 131--295.

\bibitem{DuboisRico2016}
{\sc Dubois, D., and Rico, A.}
\newblock Axiomatisation of discrete fuzzy integrals with respect to
  possibility and necessity measures.
\newblock {\em Modeling Decisions for Artificial Intelligence\/} (2016),
  94--106.

\bibitem{DuranteSempi2015}
{\sc Durante, F., and Sempi, C.}
\newblock {\em Principles of Copula Theory}.
\newblock CRC Press, 07 2015.

\bibitem{Grabisch1996}
{\sc Grabisch, M.}
\newblock The application of fuzzy integrals in multicriteria decision making.
\newblock {\em European Journal of Operational Research 89}, 3 (1996),
  445--456.

\bibitem{grabisch2016set}
{\sc Grabisch, M.}
\newblock {\em Set Functions, Games and Capacities in Decision Making}.
\newblock Theory and Decision Library C. Springer International Publishing,
  2016.

\bibitem{GRABISCH20111}
{\sc Grabisch, M., Marichal, J.-L., Mesiar, R., and Pap, E.}
\newblock Aggregation functions: Means.
\newblock {\em Information Sciences 181}, 1 (2011), 1--22.

\bibitem{HalcinovaHutnikKiselakSupina2019}
{\sc Hal{\v{c}}inov{\'a}, L., Hutn{\'i}k, O., Kise{\v{l}}{\'a}k, J., and
  {\v{S}}upina, J.}
\newblock Beyond the scope of super level measures.
\newblock {\em Fuzzy Sets and Systems 364\/} (2019), 36--63.

\bibitem{KlementMesiarPap2010}
{\sc Klement, E.~P., Mesiar, R., and Pap, E.}
\newblock {A~universal integral as common frame for Choquet and Sugeno
  integral}.
\newblock {\em IEEE Transactions on Fuzzy Systems 18\/} (2010), 178--187.

\bibitem{MirandaGrabischGil2002}
{\sc Miranda, P., Grabisch, M., and Gil, P.}
\newblock p-symmetric fuzzy measures.
\newblock {\em International Journal of Uncertainty Fuzziness and
  Knowledge-Based Systems 10\/} (02 2002), 105--123.

\bibitem{Shilkret1971}
{\sc Shilkret, N.}
\newblock Maxitive measure and integration.
\newblock {\em Indagationes Mathematicae {33}\/} (1971), 109--116.

\bibitem{Singh2020}
{\sc Singh, D., and Singh, B.}
\newblock Investigating the impact of data normalization on classification
  performance.
\newblock {\em Applied Soft Computing 97\/} (2020), 105524.

\bibitem{Sugeno1974}
{\sc Sugeno, M.}
\newblock {\em Theory of fuzzy integrals and its applications}.
\newblock PhD thesis, Tokyo Institute of Technology, Tokyo, Japan, 1974.

\bibitem{Yager1998}
{\sc Yager, R.}
\newblock On ordered weighted averaging aggregation operators in multicriteria
  decisionmaking.
\newblock {\em IEEE Transactions on Systems, Man, and Cybernetics 18}, 1
  (1988), 183--190.

\bibitem{ZADEH19999}
{\sc Zadeh, L.}
\newblock {Fuzzy sets as a basis for a theory of possibility}.
\newblock {\em Fuzzy Sets and Systems {\bf 100}\/} (1999), 9--34.

\end{thebibliography}

\section*{Appendix}
\textbf{Proof of Corollary~\ref{specimiery}}
\begin{enumerate}[(i)]
\item Clearly, in accordance with~(\ref{Fi}) we have $0=\mu_0=\mu(\emptyset)$ and according to Theorem~\ref{gsf2} it is achieved on $\Big[\min_{k\leq 0}\sA_{\inv{k}},+\infty\Big)=[\sA_{\inv{0}},+\infty)=[\aA[{[n]}]{\x},+\infty)$.
Then $1=\mu_1=\dots=\mu_{\kappa-1}$ is achieved on $[0,\aA[{[n]}]{\x})$.

\item From Theorem~\ref{gsf2}, the value $1=\mu_{\kappa-1}$ is achieved on $\Big[0, \min_{k<\kappa-1}\sA_{\inv{k}}\Big)=\Big[0,\min_{E\neq \emptyset}\aA[E]{\x}\Big).$




\item 
Let us consider the bijection $F_{*}$ given in~\eqref{Fi} such that $|F_{k}|<|F_{l}|$ implies $k<l$ and for any $i\in[n]_0$ let us set \begin{align*}\omega^{*}(i)&=\max\{j\in[\kappa-1]_0:|F_j|=i\},\,\,
\omega_{*}(i)=\min\{j\in[\kappa-1]_0:|F_j|=i\}. \end{align*}
It is easy to see that $\omega_{*}(i)-1=\omega^{*}(i-1)$ with the convention $\omega^{*}(-1)=-1$. Further, because of monotonicity of $\mu$ and FCA for any $i\in[n]_0$ we have 
\begin{align*}
 \min_{k\leq\omega^{*}(i)}\sA_{\inv{k}}=  \min_{k\leq\omega^{*}(i)}\sA(\x|F_k^c)=\min_{k\in[\omega_{*}(i),\omega^{*}(i)]}\sA(\x|F_k^c)=\min_{|E|=n-i}\sA(\x|E).
\end{align*}
Indeed, for any $k<\omega_{*}(i)$ since $\collectionGSF=2^{[n]}$ there exists $\tilde{k}\in[\omega_{*}(i), \omega^{*}(i)]$ such that $F_{\tilde{k}}\supset F_{k}$.
Then $\sA(\x|F_{\tilde{k}}^c)\leq \sA(\x|F_{k}^c)$.
According to Theorem~\ref{gsf2} we have that $\mu^i$ is achieved on 
\begin{align*}\bigcup_{j\in\{\omega_{*}(i),\dots,\omega^{*}(i)\}}\Big[\min_{k\leq j}\sA_{\inv{k}},\min_{k< j}\sA_{\inv{k}}\Big)&=\Big[\min_{k\leq \omega^{*}(i)}\sA_{\inv{k}},\min_{k< \omega_{*}(i)}\sA_{\inv{k}}\Big)\\&=\Big[\min_{k\leq \omega^{*}(i)}\sA_{\inv{k}},\min_{k\leq \omega^{*}(i-1)}\sA_{\inv{k}}\Big)\\&=\Big[\min_{E=n-i}\sA_{\inv{k}},\min_{E=n-i+1}\sA_{\inv{k}}\Big).
\end{align*}
\item 
Let us consider the bijection $F_{*}$ given in~\eqref{Fi} such that $F_0=\emptyset$ and for any $i\in\{2,3,\dots,n\}$
\begin{center}
$\{\sigma(i-1)\}\subseteq F_{k}\subseteq G^c_{\sigma(i)}$, 
$\{\sigma(i)\}\subseteq F_{l}\subseteq G^c_{\sigma(i+1)}$, implies $k<l$.  
\end{center}
According to Theorem~\ref{gsf2} it is clear that $\pi(\sigma(0))$ is achieved on $\Big[\sA(\x|G_{\sigma(1)}),+\infty\Big)$. Further, for any $i\in[n]$ let us set \begin{align*}\tau^{*}(\sigma(i))&=\max\{j\in[\kappa-1]_0:\Pi(F_j)=\pi(\sigma(i))\,\,\text{and}\,\,\{\sigma(i)\}\subseteq F_j\subseteq G^c_{\sigma(i+1)}\},\,\,\\
\tau_{*}(\sigma(i))&=\min\{j\in[\kappa-1]_0:\Pi(F_j)=\pi(\sigma(i))\,\,\text{and}\,\,\{\sigma(i)\}\subseteq F_j\subseteq G^c_{\sigma(i+1)}\}. \end{align*}
It is easy to see that $\tau_{*}(\sigma(i))-1=\tau^{*}(\sigma(i-1))$ with the convention $\tau^{*}(0)=0$. Further, because of monotonicity of $\mu$ and FCA for any $i\in[n]_0$ we have 
\begin{align*}
 \min_{k\leq\tau^{*}(\sigma(i))}\sA_{\inv{k}}=  \min_{k\leq\tau^{*}(\sigma(i))}\sA(\x|F_k^c)=\min_{k\in\{\tau_{*}(\sigma(i)),\dots,\tau^{*}(\sigma(i))\}}\sA(\x|F_k^c)=\sA(\x|G_{\sigma(i+1)}).
\end{align*}

 Indeed, for any $k<\tau_{*}(\sigma(i))$ we have $F_{k}\subseteq G^c_{\sigma(i+1)}$, where $G^c_{\sigma(i+1)}=F_{\tilde{k}}$ for some $\tilde{k}\in[\tau_{*}(\sigma(i)),\tau^{*}(\sigma(i))]$, thus explaining the second equality. Further, for any $k\in[\tau_{*}(\sigma(i)),\tau^{*}(\sigma(i))]$ $$F_{k}\subseteq G^c_{\sigma(i+1)}$$ therefore $\sA(\x|G_{\sigma(i+1)})\leq \sA(\x|F_{k}^c)$.
According to Theorem~\ref{gsf2} we have that $\pi(\sigma(i))$ is achieved on 
\begin{align*}\bigcup_{j\in\{\tau_{*}(\sigma(i)),\dots,\tau^{*}(\sigma(i))\}}\Big[\min_{k\leq j}\sA_{\inv{k}},\min_{k< j}\sA_{\inv{k}}\Big)&=\Big[\min_{k\leq \tau^{*}(\sigma(i))}\sA_{\inv{k}},\min_{k< \tau_{*}(\sigma(i))}\sA_{\inv{k}}\Big)\\&=\Big[\min_{k\leq \tau^{*}(\sigma(i))}\sA_{\inv{k}},\min_{k\leq \tau^{*}(\sigma(i-1))}\sA_{\inv{k}}\Big)\\&=\Big[\sA(\x|G_{\sigma(i+1)}),\sA(\x|G_{\sigma(i)})\Big).
\end{align*}
\item It is clear that $N([n])=1-\Pi(\emptyset)=1$ and for each $\{\sigma(i)\}\subseteq F^c\subseteq G_{\sigma(i+1)}^c$  $$N(F)=1-\Pi(F^c)=1-\pi(\sigma(i)),$$
with $i\in[n]$ and $$0=1-\pi(\sigma(n))\leq \dots \leq 1-\pi(\sigma(i))\leq\dots\leq 1-\pi(\sigma(0))=1.$$ Let us consider the bijection $F_{*}$ given in~\eqref{Fi}
such that $F_{\kappa-1}=\emptyset$ and for any $i\in\{2,3,\dots,n\}$
\begin{center}
$\{\sigma(i-1)\}\subseteq F_{k}\subseteq G^c_{\sigma(i)}$, 
$\{\sigma(i)\}\subseteq F_{l}\subseteq G^c_{\sigma(i+1)}$, implies $k>l$.  
\end{center}
According to Theorem~\ref{gsf2} it is clear that $0=1-\pi(\sigma(n))$ is achieved on $\Big[\sA(\x|\{\sigma(n)\}),+\infty\Big)$. Further, for any $i\in[n-1]$ let us set \begin{align*}\rho^{*}(\sigma(i))&=\max\{j\in[\kappa-1]_0:N(F_j)=1-\pi(\sigma(i))\,\,\text{and}\,\,\{\sigma(i)\}\subseteq F_j^c\subseteq G_{\sigma(i+1)}^c\}\\
\rho_{*}(\sigma(i))&=\min\{j\in[\kappa-1]_0:N(F_j)=1-\pi(\sigma(i))\,\,\text{and}\,\,\{\sigma(i)\}\subseteq F_j^c\subseteq G_{\sigma(i+1)}^c\}.
\end{align*}
It is easy to see that $\rho^{*}(\sigma(i))\geq\rho_{*}(\sigma(i))>0$ and $\rho_{*}(\sigma(i))-1=\rho^{*}(\sigma(i+1))$. Further, because of monotonicity of $\mu$ and FCA for any $i\in[n-1]_0$ we have 
\begin{align*}
 \min_{k\leq\rho^{*}(\sigma(i))}\sA_{\inv{k}}=  \min_{k\leq\rho^{*}(\sigma(i))}\sA(\x|F_k^c)=\min_{k\geq i}\sA(\x|\{\sigma(k)\}).
\end{align*}
Indeed, for any $k\leq \rho^{*}(\sigma(i))$ there exists $\tilde{k}\geq i$ such that $N(F_k)= 1-\pi(\sigma(\tilde{k}))\leq 1-\pi(\sigma(i))$. Then $F_k^c\supseteq\{\sigma(\tilde{k})\}$. 
According to Theorem~\ref{gsf2} we have that $1-\pi(\sigma(i))$ is achieved on 
\begin{align*}&\bigcup_{j\in\{\rho_{*}(\sigma(i)),\dots,\rho^{*}(\sigma(i))\}}\Big[\min_{k\leq j}\sA_{\inv{k}},\min_{k< j}\sA_{\inv{k}}\Big)=\Big[\min_{k\leq \rho^{*}(\sigma(i))}\sA_{\inv{k}},\min_{k< \rho_{*}(\sigma(i))}\sA_{\inv{k}}\Big)\\&=\Big[\min_{k\leq \rho^{*}(\sigma(i))}\sA_{\inv{k}},\min_{k\leq \rho^{*}(\sigma(i+1))}\sA_{\inv{k}}\Big)=\Big[\min_{k\geq i}\sA(\x|\{\sigma(k)\}),\min_{k\geq i+1}\sA(\x|\{\sigma(k)\}))\Big).
\end{align*}

\end{enumerate}





\bigskip

\noindent\textbf{Proof of Proposition~\ref{porovnanie}} Let us consider an arbitrary, fixed $i\in\ozn{\kappa-1}$ such that $\sA_i\neq \sA_{i+1}$ (otherwise it is trivial). Let us denote $$j=\min\{l: \sA_{\inv{l}}\leq \sA_i\text{\,\,and\,\,} \min_{k\leq i}\mu_{(k)}=\mu_l\}.$$ 
The above-mentioned set is nonempty. Indeed, for each $l$ such that $\min_{k\leq i}\mu_{(k)}=\mu_l$ w.r.t.\ denotations from the beginning of this section there exists $i_l\leq i$ such that $F_{l}=E_{i_l}^c$ and $\mu_l=\mu_{(i_l)}$. Further, it is clear that $\sA_{\inv{l}}= \sA_{i_l}\leq \sA_{i}$.

From the definition of $j$ we immediately have $\min_{k\leq j}\sA_{\inv{k}}\leq\sA_{\inv{j}}\leq\sA_i$. Moreover, it also holds that $\min_{k< j}\sA_{\inv{k}}\geq \sA_{i+1}$. By contradiction: Let $\min_{k< j}\sA_{\inv{k}}< \sA_{i+1}$. Then there exists $k^*<j$ such that $\sA_{\inv{k^\ast}}< \sA_{i+1}$. Because of~\eqref{Ei} we get $\sA_{\inv{k^\ast}}\leq \sA_{i}$. Further, $\mu_{k^*}<\mu_j$: From the fact that $k^*<j$ we have $\mu_{k^*}\leq\mu_j$, however, the equality can not happen because of the definition of $j$. Further for $\alpha\in[\sA_i,\sA_{i+1})$  because of formula~\eqref{vyjgsf1} and because of the definition of $j$ we have 
$\gsf{}{\x}{\alpha}=\mu_{j}$. However, since $\sA_{\inv{k^\ast}}\leq \sA_{i}$ for $\alpha\in[\sA_i,\sA_{i+1})$ we get
$$\gsf{}{\x}{\alpha}=\min\left\{\mu_j: \sA_{\inv{j}}\leq\alpha\right\}\leq\mu_{k^*}<\mu_{j}.$$
This is a contradiction.\qed

\noindent\textbf{Auxiliary pseudocodes used in Algorithms~\ref{alg_GSF_from_i} and Algorithms~\ref{alg_GSF_from_j}.}

The first pseudocode, namely method \textit{find-sets-$E_i$-and-$F_i$}, describe the calculation of permutation listed in~(\ref{Ei}) and~(\ref{Fi}).
Second and third algorithms describe determination of values of $(\cdot)$, $\inv{\cdot}$.

\begin{center}
\SetKwRepeat{Do}{do}{while}
\begin{algorithm}[H]
\Fn{find-sets-$E_i$-and-$F_i$$(\collectionGSF, \mu, \x, \cA)$}{
$i:=0$, $\collectionGSF^\prime:=\collectionGSF$\;
	\Do{$\collectionGSF\neq\emptyset$}{
      $E_i\colon E_i\in\collectionGSF\text{ and }\aA[E_i]{\x}=\min\{\aA[E]{\x}:E\in\collectionGSF\}$\;
      $F_i\colon F_i\in\collectionGSF^\prime\text{ and }\mu(F_i)=\min\{\mu(F):F\in\collectionGSF^\prime\}$\;
      $\collectionGSF:=\collectionGSF\setminus\{E_i\}$\;
      $\collectionGSF^\prime:=\collectionGSF^\prime\setminus\{F_i\}$\;
      $i:=i+1$\;
	}\Return{$E_0,\dots,E_{\kappa-1}$, $F_0,\dots,F_{\kappa-1}$}\;
	}
\caption{Determination of sets $E_i$ and $F_i$, $i\in\ozn{\kappa-1}$
}
\label{alg_EF}
\end{algorithm}
\end{center}



\medskip
\noindent
\begin{minipage}{0.49\textwidth}
\SetKwRepeat{Do}{do}{while}
\begin{algorithm}[H]
\Fn{the-$(\cdot)$-map$(\collectionGSF, \mu, \x, \cA)$}{
$E_0,\dots,E_{\kappa-1}$, $F_0,\dots,F_{\kappa-1}$ $\leftarrow$ find-sets-$E_i$-and-$F_i$$(\collectionGSF, \mu, \x, \cA)$\;
\For{$(i=0$, $i<\kappa$, $i\mathrm{++})$}{%
    \For{$(j=0$, $j<\kappa$, $j\mathrm{++})$}{%
        \If{$(E_i=F_j^c)$}{%
            $(i):=j$\;
        }
    }
}
\Return{$(0),\dots,(\kappa-1)$}\;
}
\caption{Determination of values of a map $(\cdot)$
}
\label{alg_(i)}
\end{algorithm}
\end{minipage}
\hfill
\begin{minipage}{0.49\textwidth}
\SetKwRepeat{Do}{do}{while}
\begin{algorithm}[H]
\Fn{the-$\inv{\cdot}$-map$(\collectionGSF, \mu, \x, \cA)$}{
$E_0,\dots,E_{\kappa-1}$, $F_0,\dots,F_{\kappa-1}$ $\leftarrow$ find-sets-$E_i$-and-$F_i$$(\collectionGSF, \mu, \x, \cA)$\;
\For{$(i=0$, $i<\kappa$, $i\mathrm{++})$}{%
    \For{$(j=0$, $j<\kappa$, $j\mathrm{++})$}{%
        \If{$(E_i=F_j^c)$}{%
            $\inv{j}:=i$\;
        }
    }
}
\Return{$\inv{0},\dots,\inv{\kappa-1}$}\;
}
\caption{Determination of values of a map $\inv{\cdot}$
}
\label{alg_(j)^-1}
\end{algorithm}
\end{minipage}
\bigskip\bigskip

\smallskip

\noindent\textbf{Proof of Proposition~\ref{vlastnosti_psi}}
\begin{enumerate}[(i)]
\item 
Let us denote $$i^{\star}=\min\{l \in\ozn{\kappa-1}: \mu_{(l)}=\min_{k\leq i}\mu_{(k)}\}.$$
It is clear that $i^{\star}\leq i$, $\min_{k\leq i}\mu_{(k)}=\mu_{(i^{\star})}$. Further, 
\begin{itemize}
    \item Since $\mu_{(i^{\star})}\geq\min_{k\leq i^{\star}}\mu_{(k)}\geq \min_{k\leq i}\mu_{(k)}=\mu_{(i^{\star})}$, then $\min_{k\leq i^{\star}}\mu_{(k)}=\min_{k\leq i}\mu_{(k)}$. Therefore $\psil(i)\leq i^{\star}$.
    \item For any $l<i^{\star}$ it holds $$\min_{k\leq l}\mu_{(k)}>\min_{k\leq i}\mu_{(k)}.$$ Indeed, $\min_{k\leq l}\mu_{(k)}\geq\min_{k\leq i^{\star}}\mu_{(k)}=\mu_{(i^{\star})}$. However, $\min_{k\leq l}\mu_{(k)}\neq\min_{k\leq i^{\star}}\mu_{(k)}$. By contradiction: Let  $\min_{k\leq l}\mu_{(k)}=\mu_{(i^{\star})}$. Then there exist $k^{\star}\leq l<i^{\star}$ such that $\mu_{(k^{\star})}=\mu_{(i^{\star})}$. This is a contradiction with the definition of $j^{\star}$. Therefore $\psil(i)>l$.
\end{itemize}
From the previous it follows that $\psil(i)=i^{\star}$.

\item It follows directly from definitions of $\psir$, $\psil$. 

\item It follows from  the equalities $\min_{k\leq \psil(i)}\mu_{(k)}=\min_{k\leq i}\mu_{(k)}=\min_{k\leq \psir(i)}\mu_{(k)}$. Moreover, from part (i) and because of Lemma~\ref{vl_i} we have $\mu(\psil(i))=\min_{k\leq i}\mu_{(k)}=\bi{(i)}$.
\item It follows directly from definitions of $\psil$, $\psir$ and from the fact that if $\psil(a)=\psil(b)$  ($\psir(a)=\psir(b)$), then there is $\tilde{l}\in\ozn{\kappa-1}$ such that $\min_{k\leq a}\mu_{(k)}=\min_{k\leq\tilde{l}}\mu_{(k)}=\min_{k\leq b}\mu_{(k)}$.
\item It follows from the proof of (iv) and from the fact that if $\psil(a)<\psil(b)$, then $\min_{k\leq\psil(a)}\mu_{(k)}>\min_{k\leq\psil(b)}\mu_{(k)}$ (equality does not occur because of statement (v)).
Then we get $$\min_{k\leq a}\mu_{(k)}=\min_{k\leq\psil(a)}\mu_{(k)}>\min_{k\leq\psil(b)}\mu_{(k)}=\min_{k\leq b}\mu_{(k)},$$
where the first and the last equality hold because of definition $\psil$.
The same for $\psir(a)<\psir(b)$.
\item Let us denote $$M_1=\{l_1\in\ozn{\kappa-1}: \min_{k\leq l_1}\mu_{(k)}=\min_{k\leq a}\mu_{(k)}\}\,\quad M_2=\{l_2\in\ozn{\kappa-1}: \min_{k\leq l_2}\mu_{(k)}=\min_{k\leq b}\mu_{(k)}\}.$$
Since $a\leq b$, then $\min_{k\leq a}\mu_{(k)}\geq\min_{k\leq b}\mu_{(k)}$.
If $\min_{k\leq a}\mu_{(k)}=\min_{k\leq b}\mu_{(k)}$, then directly from definition $\psil$, resp.\ $\psir$, we have $\psil(a)=\psil(b)$, resp.\ $\psir(a)=\psir(b)$.
Further, let us suppose that $\min_{k\leq a}\mu_{(k)}>\min_{k\leq b}\mu_{(k)}$.
Then for any $l_1\in M_1$ and any $l_2\in M_2$ it holds
$$\min_{k\leq l_1}\mu_{(k)}=\min_{k\leq a}\mu_{(k)}>\min_{k\leq b}\mu_{(k)}=\min_{k\leq l_2}\mu_{(k)}\text{,}$$
therefore $l_2>l_1$.
Thus also $\min M_2>\min M_1$, resp.\ $\max M_2>\max M_1$, that is $\psil(b)>\psil(a)$, resp.\ $\psir(b)>\psir(a)$.
\qed
\end{enumerate}
\bigskip

\noindent\textbf{Proof of Proposition~\ref{naj_gsf_cez_psi}}
Because of Proposition~\ref{vlastnosti_psi} (iii) and Theorem~\ref{zjednodusenie_def} we know that the value $\min_{k\leq i}\mu_{(k)}=\mu_{(\psil(i))}$ is achieved on intervals $[\sA_m,\sA_{m+1})$, $m\in\{\psil(i),\dots,\psir(i)\}$. From definitions of $\psir$ and $\psil$ it is clear that there are no other intervals with this property. Therefore the value $\min_{k\leq i}\mu_{(k)}{=\mu_{(\psil(i))}}$ is achieved on $\bigcup_{m\in M}[\sA_m,\sA_{m+1})=\big[\sA_{\psil(i)},\sA_{\psir(i)+1}\big)$.
%
\qed
\bigskip

\noindent\textbf{Proof of Lemma~\ref{lema_ku_psi}}
Let us prove part (i). 
\begin{enumerate}
    \item[(i1)] If $\min_{k\leq i}\mu_{(k)}>\min_{k\leq i+1}\mu_{(k)}$, then directly from definitions of $\psil$ and $\psir$ we have $\psir(i)=i$ and $\psil(i+1)=i+1$, thus $\psir(i)+1=\psil(i+1)$.
    Further, according to Proposition~\ref{naj_gsf_cez_psi}, we have
    $$\gsf{}{\x}{\alpha}=\min_{k\leq i}\mu_{(k)}=\mu_{(\psil{(i)})}$$
    for any $\alpha\in\big[\sA_{\psil(i)},\sA_{\psir(i)+1}\big)$ and this interval is the greatest possible. 
    Using the above equality
    we have $$\big[\sA_{\psil(i)},\sA_{\psir(i)+1}\big)=\big[\sA_{\psil(i)},\sA_{\psil(i+1)}\big).$$ 
    \item[(i2)] It holds trivially.
\end{enumerate}
Part (ii) can be proved analogously.
\qed
\bigskip

\noindent{\textbf{Proof of Proposition~\ref{skratenie_cez_psi}}}
Let us consider an arbitrary (fixed) $i\in\ozn{\kappa-2}$. 
If $\min_{k\leq i}\mu_{(k)}>\min_{k\leq i+1}\mu_{(k)}$, then according to Lemma~\ref{lema_ku_psi} (i1) $\min_{k\leq i}\mu_{(k)}{=\mu_{(\psil(i))}}$ is achieved on $\big[\sA_{\psil(i)}, \sA_{\psil(i+1)}\big)$. 
Moreover, this interval is the greatest possible.  
If $\min_{k\leq i}\mu_{(k)}=\min_{k\leq i+1}\mu_{(k)}$, then $\psil(i)=\psil(i+1)$, see Lemma~\ref{lema_ku_psi}(i2), thus $\big[\sA_{\psil(i)}, \sA_{\psil(i+1)}\big)=\emptyset$. This demonstrates that each value of generalized survival function is included just once in the sum and the first formula is right. 
The third formula follows from Lemma~\ref{vl_i}. 
The second and the fourth formula can be proved analogously. 
\qed



\end{document}